\newcommand{\set}[1]{\left\{#1\right\}}
\newcommand{\p}{\pi^{-1}(TM)}
\newcommand {\cp}{\mathfrak{X}(\pi (M))}
\newcommand {\ccp}{\mathfrak{X}^{*}(\pi (M))}
\def\Section#1{\vspace{30truept}\addtocounter{section}{1}\setcounter{thm}{0}\setcounter{equation}{0}
{\noindent\Large\bf\arabic{section}.~~#1}\par \vspace{12pt}}
\newtheorem{thm}{Theorem}[section]
\newtheorem{cor}[thm]{Corollary}
\newtheorem{lem}[thm]{Lemma}
\newtheorem{prop}[thm]{Proposition}
\newtheorem{defn}[thm]{Definition}
\newtheorem{rem}[thm]{Remark}
\numberwithin{equation}{section}
\begin{document}
\title{{\bf Concircular $\pi$-Vector Fields and Special Finsler Spaces}\footnote{ArXiv Number: 1208.2838 [math.DG]} }
\author{{\bf Nabil L. Youssef$^{\,1, 2}$ and A. Soleiman$^{3}$}}
\date{}
%\thanks{\it Department of Mathematics, etc}
%\pagestyle{fancy}

             % End of preamble and beginning of text.
\maketitle                     % Produces the title.
\vspace{-1.15cm}
\begin{center}
{$^{1}$Department of Mathematics, Faculty of Science,\\ Cairo
University, Giza, Egypt}
\end{center}
\vspace{-0.6cm}
\begin{center}
{$^{2}$ Center of Theoretical Physics (CTP) \\at the British
University in Egypt (BUE)}
\end{center}
\vspace{-0.6cm}
\begin{center}
{$^{3}$Department of Mathematics, Faculty of Science,\\ Benha
University, Benha, Egypt}
\end{center}
\vspace{-0.6cm}
\begin{center}
E-mails: nlyoussef@sci.cu.edu.eg, nlyoussef2003@yahoo.fr\\
  {\hspace{2.3cm}}  amr.hassan@fsci.bu.edu.eg, amrsoleiman@yahoo.com
\end{center}
\smallskip

\vspace{1cm} \maketitle
\smallskip

%%%%%%%%%%%%%%%%%%%%%%%%%%%%%%%%%%%%%%%%%%%%%%%%%%%%%%%%%% Abstract %%%%%%%%%%%%%%%%%%%%%%%%%%%%%%%%%%%%%%%%%%%%%%%%%%%

\noindent{\bf Abstract.}   The aim of the present paper is to investigate intrinsically the notion of a concircular
$\pi$-vector field in Finsler geometry. This generalizes the concept
of a concircular vector field in Riemannian geometry and the concept of concurrent
vector field in Finsler geometry. Some properties of concircular
$\pi$-vector fields are obtained. Different types of recurrence are discussed. The effect of the
existence of a concircular $\pi$-vector field on  some important
special Finsler spaces is investigated.
Almost all results obtained in this work are formulated in a coordinate-free form.

\bigskip
\medskip\noindent{\bf Keywords:\/}\, Finsler manifold,  Cartan connection, Concurrent
$\pi$-vector field, Concircular
$\pi$-vector field, Special Finsler space, Recurrent Finsler space.

\bigskip
\medskip\noindent{\bf MSC  2010:} 53C60,
53B40, 58B20.
%\vspace{1cm}

%%%%%%%%%%%%%%%%%%%%%%%%%%%%%%%%%%%%%%%%%%%%%%%%%%%%%%%%%%% Introduction %%%%%%%%%%%%%%%%%%%%%%%%%%%%%%%%%%%%%%%%%%%%%%%%%%%%%%%%%%%%

\newpage
\vspace{30truept}\centerline{\Large\bf{Introduction}}\vspace{12pt}

\par
The concept of a concurrent vector field in Riemannian geometry had
been introduced and investigated by K. Yano  \cite{con.1}. Concurrent
vector fields in  Finsler geometry had been studied \emph{locally}
by S. Tachibana \cite{con.2}, M. Matsumoto and K. Eguchi \cite{r90}.
In \cite{con.}, we investigated \emph{intrinsically}  concurrent
vector fields in Finsler geometry. On the other hand, the notion of
a concircular vector field in Riemannian geometry has been studied
by Adat and Miyazawa \cite{conc.r}. Concircular vector fields in
Finsler geometry have been studied \emph{locally} by Prasad et. al. \cite{conc.f}.
\par
In this paper, we introduce and investigate \emph{intrinsically} the
notion of a concircular $\pi$-vector field in Finsler geometry, which
generalizes the concept of a concircular vector field in Riemannian
geometry and the concept of a concurrent vector field in Finsler geometry. Some
properties of concircular $\pi$-vector fields are obtained. These
properties, in turn, play a key role in obtaining other interesting
results. Different types of recurrence are discussed. The effect of the existence of a concircular $\pi$-vector
field on  some important special Finsler spaces is investigated\,:
Berwald, Landesberg, $C$-reducible,
semi-$C$-reducible, quasi-$C$-reducible, $C_{2}$-like, $S_{3}$-like, $P$-reducible, $P_{2}$-like, $h$-isotropic, $T^h$-recurrent,
$T^v$-recurrent, etc.
\par
Global formulation of different aspects of Finsler geometry may help
better understand these aspects  without being trapped into the
complications of indices. This is one of the motivations of the
present work, where almost all results obtained are formulated in a
coordinate-free form.

%%%%%%%%%%%%%%%%%%%%%%%%%%%%%%%%%%%%%%%%%%%%%%%%%%% 1. Notations and Preliminaries %%%%%%%%%%%%%%%%%%%%%%%%%%%%%%%%%%

\Section{Notation and Preliminaries}

In this section, we give a brief account of the basic concepts
 of the pullback approach to intrinsic Finsler geometry necessary for this work. For more
 details, we refer to  \cite{r86} and \cite{r94}. We
 shall use the same notations of \cite{r86}.

\vspace{3pt}
 In what follows, we denote by $\pi: T M\longrightarrow M$ the
tangent bundle to $M$, $\mathfrak{F}(TM)$ the algebra of $C^\infty$
functions on $TM$, $\cp$ the $\mathfrak{F}(TM)$-module of
differentiable sections of the pullback bundle $\pi^{-1}(T M)$. The
elements of $\mathfrak{X}(\pi (M))$ will be called $\pi$-vector
fields and will be denoted by barred letters $\overline{X} $. The
tensor fields on $\pi^{-1}(TM)$ will be called $\pi$-tensor fields.
The fundamental $\pi$-vector field is the $\pi$-vector field
$\overline{\eta}$ defined by $\overline{\eta}(u)=(u,u)$ for all
$u\in TM$.
\par
We have the following short exact sequence of vector bundles
$$0\longrightarrow
 \pi^{-1}(TM)\stackrel{\gamma}\longrightarrow T(T M)\stackrel{\rho}\longrightarrow
\pi^{-1}(TM)\longrightarrow 0 ,\vspace{-0.1cm}$$ with the well known
definitions of  the bundle morphisms $\rho$ and $\gamma$. The vector
space $V_u (T M)= \{ X \in T_u (T M) : d\pi(X)=0 \}$  is the
vertical space to $M$ at $u\in TM$.

\vspace{3pt}
Let $D$ be  a linear connection on the pullback bundle
$\pi^{-1}(TM)$.
 We associate with $D$ the map \vspace{-0.1cm} $K:T (T M)\longrightarrow
\pi^{-1}(TM):X\longmapsto D_X \overline{\eta} ,$ called the
connection map of $D$.  The vector space $H_u (T M)= \{ X \in T_u (T
M) : K(X)=0 \}$ is the horizontal space to $M$ at $u$ .
   The connection $D$ is said to be regular if
$$ T_u (T M)=V_u (T M)\oplus H_u (T M) \,\,\,  \forall \, u\in T M.$$

If $M$ is endowed with a regular connection, then the vector bundle
   maps $
 \gamma,\, \rho |_{H(T M)}$ and $K |_{V(T M)}$
 are vector bundle isomorphisms. The map
 $\beta:=(\rho |_{H(T M)})^{-1}$
 will be called the horizontal map of the connection
$D$. We have $K \circ \gamma=id_{\p}$.
\par
 The horizontal ((h)h-) and
mixed ((h)hv-) torsion tensors of $D$, denoted by $Q $ and $ T $
respectively, are defined by \vspace{-0.2cm}
$$Q (\overline{X},\overline{Y})=\textbf{T}(\beta \overline{X}\beta \overline{Y}),
\, \,\,\, T(\overline{X},\overline{Y})=\textbf{T}(\gamma
\overline{X},\beta \overline{Y}) \quad \forall \,
\overline{X},\overline{Y}\in\mathfrak{X} (\pi (M)),\vspace{-0.2cm}$$
where $\textbf{T}$ is the (classical) torsion tensor field
associated with $D$.
\par
The horizontal (h-), mixed (hv-) and vertical (v-) curvature tensors
of $D$, denoted by $R$, $P$ and $S$ respectively, are defined by
$$R(\overline{X},\overline{Y})\overline{Z}=\textbf{K}(\beta
\overline{X}\beta \overline{Y})\overline{Z},\quad
 {P}(\overline{X},\overline{Y})\overline{Z}=\textbf{K}(\beta
\overline{X},\gamma \overline{Y})\overline{Z},\quad
 {S}(\overline{X},\overline{Y})\overline{Z}=\textbf{K}(\gamma
\overline{X},\gamma \overline{Y})\overline{Z}, $$
 where $\textbf{K}$
is the (classical) curvature tensor field associated with $D$.
\par
The contracted curvature tensors of $D$, denoted by $\widehat{
{R}}$, $\widehat{ {P}}$ and $\widehat{ {S}}$ respectively, known
also as the
 (v)h-, (v)hv- and (v)v-torsion tensors, are defined by
$$\widehat{ {R}}(\overline{X},\overline{Y})={ {R}}(\overline{X},\overline{Y})\overline{\eta},\quad
\widehat{ {P}}(\overline{X},\overline{Y})={
{P}}(\overline{X},\overline{Y})\overline{\eta},\quad \widehat{
{S}}(\overline{X},\overline{Y})={
{S}}(\overline{X},\overline{Y})\overline{\eta}.$$
\par
If $M$ is endowed with a metric $g$ on $\p$, we write
\begin{equation}\label{cur.g}
    R(\overline{X},\overline{Y},\overline{Z}, \overline{W}):
=g(R(\overline{X},\overline{Y})\overline{Z}, \overline{W}),\,
\cdots, \, S(\overline{X},\overline{Y},\overline{Z}, \overline{W}):
=g(S(\overline{X},\overline{Y})\overline{Z}, \overline{W}).
\end{equation}
\par
The following theorem guarantees the existence  and uniqueness of
the Cartan connection on the pullback bundle.\vspace{-0.2cm}
\begin{thm} {\em\cite{r92}} \label{th.1} Let $(M,L)$ be a Finsler
manifold and  $g$ the Finsler metric defined by $L$. There exists a
unique regular connection $\nabla$ on $\pi^{-1}(TM)$ such
that\vspace{-0.2cm}
\begin{description}
  \item[(a)]  $\nabla$ is  metric\,{\em:} $\nabla g=0$,

  \item[(b)] The (h)h-torsion of $\nabla$ vanishes\,{\em:} $Q=0
  $,
  \item[(c)] The (h)hv-torsion $T$ of $\nabla$\, satisfies\,\emph{:}
   $g(T(\overline{X},\overline{Y}), \overline{Z})=g(T(\overline{X},\overline{Z}),\overline{Y})$.
\end{description}
\par Such a connection is called the Cartan
  connection associated with the Finsler manifold $(M,L)$.
\end{thm}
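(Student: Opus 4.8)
The plan is to imitate the proof of the fundamental theorem of (pseudo-)Riemannian geometry: first extract Koszul-type formulae that force uniqueness, then turn these formulae into a construction and verify the connection axioms. The organizing principle is that, for a regular connection, the splitting $T(TM)=V(TM)\oplus H(TM)$ lets me decompose covariant differentiation into a vertical operator $\overline{X}\mapsto\nabla_{\gamma\overline{X}}$ and a horizontal operator $\overline{X}\mapsto\nabla_{\beta\overline{X}}$, which I would determine one at a time. A preliminary remark is that applying $\nabla g=0$ to $\gamma\overline{Z}$ and to $\beta\overline{Z}$ splits condition (a) into a v-metric identity and an h-metric identity.

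I would handle the vertical part first, because it decouples from the unknown splitting. Using $\rho\circ\gamma=0$ and $\rho\circ\beta=\mathrm{id}$, the (h)hv-torsion reads $T(\overline{X},\overline{Y})=\nabla_{\gamma\overline{X}}\overline{Y}-\rho[\gamma\overline{X},\beta\overline{Y}]$, and the bracket term here is in fact canonical (it is the plain vertical derivative of $\overline{Y}$ along $\overline{X}$ and does not see the horizontal distribution). Imposing the $g$-symmetry (c) of $T$ in its last two slots and combining it with the v-metric identity in the usual cyclic fashion yields, by nondegeneracy of $g$, a closed formula for $\nabla_{\gamma\overline{X}}\overline{Y}$ in terms of $g$ and canonical data alone. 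In particular one gets $\nabla_{\gamma\overline{X}}\overline{\eta}=\overline{X}$, consistent with $K\circ\gamma=\mathrm{id}$.

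For the horizontal part I would use condition (b): since $\rho\circ\beta=\mathrm{id}$, the vanishing of the (h)h-torsion gives $\nabla_{\beta\overline{X}}\overline{Y}-\nabla_{\beta\overline{Y}}\overline{X}=\rho[\beta\overline{X},\beta\overline{Y}]$, and the three cyclic permutations of the h-metric identity then produce a Koszul-type formula for $\nabla_{\beta\overline{X}}\overline{Y}$. Existence is obtained by reversing these steps: one defines $\nabla$ through the two formulae, sets $\nabla_X=\nabla_{\beta\rho X}+\nabla_{\gamma KX}$, and checks additivity, $C^\infty(TM)$-linearity in $X$, and the Leibniz rule, so that $\nabla$ is a genuine linear connection; regularity follows from $K\circ\gamma=\mathrm{id}$, and (a)--(c) then hold by construction.

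The genuine obstacle is that the horizontal formula is not yet meaningful: the horizontal map $\beta$, the distribution $H(TM)$, and the bracket term $\rho[\beta\overline{X},\beta\overline{Y}]$ all depend on the connection map $K(\cdot)=\nabla_\cdot\overline{\eta}$ that one is trying to build, so there is a real circularity on the horizontal side (unlike the vertical side, which was canonical). I would break it by reading horizontality as the deflection condition $\nabla_{\beta\overline{X}}\overline{\eta}=0$: feeding this into the h-metric identity converts it into an equation that pins down the nonlinear connection uniquely, identifying it with the canonical (Barthel) nonlinear connection determined by $L$. Once the splitting is thereby shown to be connection-independent, the horizontal Koszul formula becomes unambiguous and the whole construction closes up consistently, delivering both existence and uniqueness.
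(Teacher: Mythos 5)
You should first note that the paper itself contains no proof of this statement: Theorem 1 is quoted from reference [r92] (the authors' earlier paper on Cartan and Berwald connections in the pullback formalism), so your attempt can only be compared with the strategy of that cited proof, which is indeed the Koszul-type argument you outline. Your architecture matches it: the vertical part decouples (your observation that $\rho[\gamma\overline{X},\beta\overline{Y}]$ does not see the horizontal distribution is correct — in coordinates it is $X^{i}(\dot{\partial}_{i}Y^{j})\overline{\partial}_{j}$), and v-metricity together with condition (c) determines $\nabla_{\gamma\overline{X}}\overline{Y}$ with $T$ the Cartan tensor; the horizontal part is to come from (b) and h-metricity via a Koszul formula; and you correctly locate the real difficulty in the circular dependence of $\beta$, $H(TM)$ and the brackets on the connection being built.

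The genuine gap is that your resolution of that circularity is asserted, not argued, and it is exactly where the content of the theorem lies. Saying that the deflection condition $\nabla_{\beta\overline{X}}\overline{\eta}=0$ ``pins down the nonlinear connection uniquely, identifying it with the Barthel connection'' names the conclusion one needs; it is not a step. Concretely, the horizontal Koszul identity and the deflection condition form a coupled implicit system, because the horizontal lifts entering the Koszul formula involve the unknown nonlinear connection. To solve it one must contract the Koszul identity with $\overline{\eta}$ (twice), and each contraction only closes because of the homogeneity identities of the Finsler metric — $y^{i}\dot{\partial}_{k}g_{ij}=0$ (equivalently $T(\overline{X},\overline{\eta})=0$) and $g(\overline{\eta},\overline{\eta})=L^{2}$ — which eliminate the terms containing the unknown horizontal distribution and yield the spray, hence the nonlinear connection, explicitly in terms of $L$; only then does the Koszul formula become an honest definition and give uniqueness. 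Your sketch never invokes the hypothesis that $g$ is the metric defined by $L$, and without it the step genuinely fails: for an arbitrary metric on $\pi^{-1}(TM)$ (a generalized Lagrange metric) the coupled system need not be solvable and no Cartan-type connection exists in general, so no formal manipulation of (a)--(c) alone can succeed. A smaller omission of the same kind: in the existence half you must verify that the connection you define through the fixed (Barthel) splitting has vanishing deflection, i.e.\ that the kernel of \emph{its} connection map $K$ coincides with the horizontal spaces you used to define it — otherwise axioms (b) and (c), which are formulated through the connection's own $\beta$ and $K$, are not the conditions you actually built in.
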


\vspace{2pt}
 One can show that the (h)hv-torsion  of the Cartan connection is symmetric and  has the
property that  $T(\overline{X},\overline{\eta})=0$ for all
$\overline{X} \in \mathfrak{X} (\pi (M))$ \cite{r92}.

\vspace{6pt}

Concerning  the Berwald connection on the pullback
bundle, we have\vspace{-0.2cm}
\begin{thm}{\em\cite{r92}} \label{th.1a} Let $(M,L)$ be a Finsler manifold. There exists a
unique regular connection ${{D}}^{\circ}$ on $\pi^{-1}(TM)$ such
that
\begin{description}
 \item[(a)] $D^{\circ}_{h^{\circ}X}L=0$,
  \item[(b)]   ${{D}}^{\circ}$ is torsion-free\,{\em:} ${\textbf{T}}^{\circ}=0 $,
  \item[(c)]The (v)hv-torsion tensor $\widehat{P^{\circ}}$ of ${D}^{\circ}$ vanishes\,\emph{:}
   $\widehat{P^{\circ}}(\overline{X},\overline{Y})= 0$.
  \end{description}
  \par Such a connection is called the Berwald
  connection associated with the Finsler manifold $(M,L)$.
\end{thm}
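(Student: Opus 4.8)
The plan is to realize the Berwald connection $D^{\circ}$ as a deformation of the Cartan connection $\nabla$, whose existence and uniqueness are already secured by Theorem~\ref{th.1}. A regular connection on $\pi^{-1}(TM)$ is completely encoded by its connection map $K$ (equivalently, its nonlinear connection, i.e.\ the horizontal map $\beta$) together with its horizontal covariant derivative $D_{\beta\overline{X}}$ and its vertical covariant derivative $D_{\gamma\overline{X}}$. My first task is therefore to pin down each of these three data from the conditions (a)--(c), and the guiding principle is that the torsion constraint (b) fixes the vertical derivative, the curvature constraint (c) fixes the horizontal derivative, and (a) fixes the underlying splitting.

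For uniqueness I would take two connections $D^{\circ},\widetilde{D}$ satisfying (a)--(c) and analyse their difference. Since for a scalar the covariant derivative is just the ordinary directional derivative, condition (a) is a statement about the horizontal projector $h^{\circ}$ alone, namely that the horizontal distribution annihilates $L$; I would first verify that this singles out the canonical nonlinear connection, so that $D^{\circ}$ and $\widetilde{D}$ share the same $\beta,\gamma,\rho,K$ and their difference $\Phi:=D^{\circ}-\widetilde{D}$ is a genuine $\pi$-tensor. Feeding the torsion-free condition (b) into the torsion formula $\textbf{T}(X,Y)=D_{X}\rho Y-D_{Y}\rho X-\rho[X,Y]$, and using $\rho\circ\gamma=0$ together with $\rho\circ\beta=\mathrm{id}$, the difference relation becomes $\Phi(X,\rho Y)=\Phi(Y,\rho X)$; taking $X=\beta\overline{Z},\,Y=\gamma\overline{W}$ kills the vertical block $\Phi(\gamma\overline{W},\cdot)=0$, while $X=\beta\overline{Z},\,Y=\beta\overline{W}$ makes the horizontal block $\Phi(\beta\overline{X},\overline{Y})$ symmetric. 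It then remains to annihilate this symmetric horizontal block using (c): expressing $\widehat{P}$ through the vertical derivative of $\Phi$, contracting on $\overline{\eta}$, and invoking $K\circ\gamma=\mathrm{id}$ (so that $D_{\gamma\overline{Y}}\overline{\eta}=\overline{Y}$) turns $\widehat{P^{\circ}}-\widehat{\widetilde{P}}=0$ into an identity which, combined with $T(\overline{X},\overline{\eta})=0$, forces $\Phi\equiv 0$.

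For existence I would run the same bookkeeping with $\nabla$ in the role of the second connection and now \emph{solve} for the deformation $\Phi$ instead of annihilating it. Since the Cartan connection has $Q=0$ but a generally nonzero (h)hv-torsion $T$, the vanishing of the mixed torsion of $D^{\circ}:=\nabla+\Phi$ prescribes the vertical block as $\Phi(\gamma\overline{X},\overline{Y})=-\,T(\overline{X},\overline{Y})$, while condition (c) prescribes the symmetric horizontal block in terms of the vertical derivative of $T$. I would then verify in turn that this $\Phi$ satisfies $\Phi(X,\overline{\eta})=0$ --- crucially using the recorded properties that $T$ is symmetric and that $T(\overline{X},\overline{\eta})=0$ --- so that $K^{\circ}=K$ and $D^{\circ}$ inherits the canonical nonlinear connection, whence (a) holds; that the symmetric horizontal block gives $Q^{\circ}=0$ and the prescribed vertical block makes the mixed torsion vanish, giving (b); and that the horizontal block was built precisely so that (c) holds.

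The step I expect to be the main obstacle is the simultaneous consistency of (a), (b) and (c). The self-referential condition (a) is phrased through the horizontal projector $h^{\circ}$ of the unknown connection itself, and since $K^{\circ}(X)=D^{\circ}_{X}\overline{\eta}=K(X)+\Phi(X,\overline{\eta})$, changing the covariant derivative can in principle move the horizontal distribution; showing that it does not --- i.e.\ that $\Phi(X,\overline{\eta})=0$ --- and that the $\Phi$ dictated by the torsion constraint is compatible with the one dictated by the curvature constraint, is exactly where the special algebra of the Cartan tensor (its symmetry, the identity $T(\overline{X},\overline{\eta})=0$, and Euler's homogeneity relation) must be used decisively. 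I would isolate and establish those reductions first, and only then assemble $D^{\circ}=\nabla+\Phi$ and check the three axioms.
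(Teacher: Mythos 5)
The paper itself offers no proof of Theorem~\ref{th.1a}: it is quoted verbatim from \cite{r92}, and the only in-paper anchor is Theorem~\ref{2.th.1}, whose formula $D^{\circ}_{X}\overline{Y}=\nabla_{X}\overline{Y}+\widehat{P}(\rho X,\overline{Y})-T(KX,\overline{Y})$ is exactly the deformation $\Phi$ your existence argument solves for. That half of your plan is sound: the vertical block $\Phi(\gamma\overline{X},\overline{Y})=-T(\overline{X},\overline{Y})$ is forced by (b), the horizontal block by (c), and the verification that $\Phi(X,\overline{\eta})=0$ (using the symmetry of $T$, the identity $T(\overline{X},\overline{\eta})=0$, and $\nabla_{\beta\overline{\eta}}\overline{\eta}=K(\beta\overline{\eta})=0$) correctly shows $K^{\circ}=K$, so $D^{\circ}$ inherits the Cartan splitting, and (a) then follows from metricity of $\nabla$ since $2L\,(\beta\overline{X})L=\nabla_{\beta\overline{X}}g(\overline{\eta},\overline{\eta})=2g(K(\beta\overline{X}),\overline{\eta})=0$.

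The genuine gap is in your uniqueness argument, at its very first step: you claim that condition (a) alone --- ``the horizontal distribution annihilates $L$'' --- singles out the canonical nonlinear connection, and everything afterwards presupposes that $D^{\circ}$ and $\widetilde{D}$ share $\beta,\gamma,K$. That claim is false. At any $u$ with $L(u)\neq0$ one has $dL_u\neq0$ (indeed $dL(\gamma\overline{\eta})=L$ by Euler homogeneity), so $\ker dL_u$ is a hyperplane of dimension $2n-1$ meeting $V_u(TM)$ in dimension $n-1$, and \emph{every} one of the infinitely many $n$-dimensional complements of $V_u(TM)\cap\ker dL_u$ inside $\ker dL_u$ is a horizontal space satisfying (a). What actually pins down the splitting is the joint use of all three axioms: (b) on mixed arguments gives $D^{\circ}_{\gamma\overline{X}}\overline{Y}=\rho[\gamma\overline{X},\beta^{\circ}\overline{Y}]$ (the identity the paper itself uses in proving Lemma~\ref{def.ind}); (c), expanded via $K^{\circ}\circ\gamma=\mathrm{id}$, is equivalent to $D^{\circ}_{\beta^{\circ}\overline{X}}\overline{Y}=K^{\circ}[\beta^{\circ}\overline{X},\gamma\overline{Y}]$, and evaluating this at $\overline{Y}=\overline{\eta}$ forces $[\beta^{\circ}\overline{X},\gamma\overline{\eta}]$ to be horizontal, i.e.\ the nonlinear connection is \emph{homogeneous}; (b) on two horizontal arguments makes it torsion-free; (a) makes it conservative. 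Only a Grifone-type uniqueness theorem for conservative, homogeneous, torsion-free nonlinear connections --- this is the real core of the theorem, and of the argument in \cite{r92} --- then yields $\beta^{\circ}=\widetilde{\beta}$. Note also that in the uniqueness half you cannot invoke ``the special algebra of the Cartan tensor'' at all: the two competing connections are a priori unrelated to $\nabla$, so $T$ never enters; that tool belongs exclusively to your existence half. Once the splittings are known to coincide, your remaining bookkeeping does close the argument, in fact more easily than you describe: with a common splitting, condition (c) for both connections gives $\Phi(\beta\overline{X},\overline{Y})=\widehat{P^{\circ}}(\overline{X},\overline{Y})-\widehat{\widetilde{P}}(\overline{X},\overline{Y})=0$ outright.
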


\begin{thm}\label{2.th.1}{\em{\cite{r92}}} Let $(M,L)$ be a Finsler manifold.
The  Berwald connection $D^{\circ}$   is expressed in terms of the
Cartan  connection $\nabla $ as\vspace{-0.2cm}
  $$D^{\circ}_{X}\overline{Y} = \nabla _{X}\overline{Y}
+\widehat{P}(\rho X, \overline{Y})- T(KX,\overline{Y}), \quad
\forall\: X \in \mathfrak{X} (T M), \, \overline{Y} \in
\mathfrak{X}(\pi(M)) . \vspace{-0.2cm}$$ In particular, we have:
\begin{description}
  \item[(a)] $ D^{\circ}_{\gamma \overline{X}}\overline{Y}=\nabla _{\gamma
  \overline{X}}\overline{Y}-T(\overline{X},\overline{Y})$,

 \item[(b)] $ D^{\circ}_{\beta \overline{X}}\overline{Y}=\nabla _{\beta
  \overline{X}}\overline{Y}+\widehat{P}(\overline{X}, \overline{Y})$.
\end{description}
\end{thm}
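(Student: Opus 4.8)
The plan is to exploit the fact that the difference of two linear connections on $\pi^{-1}(TM)$ is a $\pi$-tensor field. Set $A(X,\overline{Y}) := D^{\circ}_{X}\overline{Y} - \nabla_{X}\overline{Y}$ for $X\in\mathfrak{X}(TM)$, $\overline{Y}\in\mathfrak{X}(\pi(M))$. Since $\nabla$ and $D^{\circ}$ obey the Leibniz rule with respect to the same multiplication, $A$ is $\mathfrak{F}(TM)$-linear in $X$ and in $\overline{Y}$, so it suffices to compute $A$ on the horizontal and vertical parts of $X$ separately, using the regular decomposition $T_{u}(TM)=V_{u}(TM)\oplus H_{u}(TM)$. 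The crucial structural input, which I would establish first, is that the Cartan and Berwald connections determine one and the same nonlinear connection, so that they share the maps $\rho,\gamma,\beta,K$ and the same splitting; this follows because axiom (a) of Theorem \ref{th.1a} characterises $h^{\circ}$ through $(h^{\circ}X)L=0$, a condition also forced on the Cartan horizontal map by metricity together with $\nabla_{\beta\overline{X}}\overline{\eta}=K\beta\overline{X}=0$.

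For the vertical part I would use the classical torsion identity $\mathbf{T}(X,Y)=D_{X}\rho Y-D_{Y}\rho X-\rho[X,Y]$. Evaluating at $(\gamma\overline{X},\beta\overline{Y})$ and using $\rho\beta\overline{Y}=\overline{Y}$ and $\rho\gamma\overline{X}=0$ gives, for any connection with this nonlinear connection, $\mathbf{T}(\gamma\overline{X},\beta\overline{Y})=D_{\gamma\overline{X}}\overline{Y}-\rho[\gamma\overline{X},\beta\overline{Y}]$. For $\nabla$ the left-hand side is the Cartan $(h)hv$-torsion $T(\overline{X},\overline{Y})$, while for $D^{\circ}$ it vanishes by Theorem \ref{th.1a}(b). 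Since the bracket term is common to both connections, subtracting yields $A(\gamma\overline{X},\overline{Y})=-T(\overline{X},\overline{Y})$, which is precisely assertion (a).

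For the horizontal part I would expand the contracted $hv$-curvature. Writing $\mathbf{K}(X,Y)\overline{Z}=D_{X}D_{Y}\overline{Z}-D_{Y}D_{X}\overline{Z}-D_{[X,Y]}\overline{Z}$ and applying it to $\overline{\eta}$ with $X=\beta\overline{X}$, $Y=\gamma\overline{Y}$, I use $D_{X}\overline{\eta}=KX$ together with $K\gamma=\mathrm{id}$ and $K\beta=0$ to obtain $\widehat{P}(\overline{X},\overline{Y})=\nabla_{\beta\overline{X}}\overline{Y}-K[\beta\overline{X},\gamma\overline{Y}]$ for the Cartan connection, and the analogous expression with $D^{\circ}$ in place of $\nabla$. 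Because $\widehat{P^{\circ}}=0$ by Theorem \ref{th.1a}(c) and the bracket term is again common, subtraction gives $A(\beta\overline{X},\overline{Y})=\widehat{P}(\overline{X},\overline{Y})$, which is assertion (b). Combining the two cases through $X=\beta\rho X+\gamma KX$ and the bilinearity of $A$ then produces $A(X,\overline{Y})=\widehat{P}(\rho X,\overline{Y})-T(KX,\overline{Y})$, i.e. the general formula.

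I expect the main obstacle to lie not in the algebra but in rigorously justifying that the two connections share the same nonlinear connection and in tracking the sign conventions for $\mathbf{K}$ and $\widehat{P}$, since the horizontal case is sensitive to the order of factors in the curvature. A secondary point requiring care is the consistency check that the correction tensor leaves the $\overline{\eta}$-derivative unchanged, so that $D^{\circ}$ and $\nabla$ really induce the same connection map $K$; this rests on the identity $T(\overline{X},\overline{\eta})=0$ noted after Theorem \ref{th.1} together with the corresponding vanishing of $\widehat{P}(\overline{X},\overline{\eta})$.
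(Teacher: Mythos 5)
Theorem \ref{2.th.1} is not proved in this paper at all: it is quoted from \cite{r92}, so your attempt can only be judged on its own correctness. Your vertical case is sound, and in fact needs less than you assume: since the classical torsion $\mathbf{T}^{\circ}$ of $D^{\circ}$ vanishes on \emph{every} pair of vector fields on $TM$, you may evaluate it on $(\gamma\overline{X},\beta\overline{Y})$ with $\beta$ the \emph{Cartan} horizontal map and obtain $D^{\circ}_{\gamma\overline{X}}\overline{Y}=\rho[\gamma\overline{X},\beta\overline{Y}]$, while the torsion identity for $\nabla$ gives $\nabla_{\gamma\overline{X}}\overline{Y}=T(\overline{X},\overline{Y})+\rho[\gamma\overline{X},\beta\overline{Y}]$; this is exactly the identity the paper itself uses inside the proof of Lemma \ref{def.ind}, and no hypothesis on nonlinear connections enters. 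The first genuine gap is your justification that the two connections share one nonlinear connection. The condition $(h^{\circ}X)L=0$ of Theorem \ref{th.1a}(a) does not characterise the horizontal space: at any $u$ with $dL_{u}\neq 0$, the kernel of $dL_{u}$ is a hyperplane of $T_{u}(TM)$ which meets $V_{u}(TM)$ in a subspace of dimension $n-1$, so it contains an infinite affine family of complements of $V_{u}(TM)$; both horizontal spaces belong to this family, and lying in a common hyperplane does not make them equal. The equality $\beta^{\circ}=\beta$, $K^{\circ}=K$ is a real theorem (in \cite{r92} both connections are shown to induce the canonical Barthel nonlinear connection), and your horizontal case cannot even be set up without it: Theorem \ref{th.1a}(c) asserts $\mathbf{K}^{\circ}(\beta^{\circ}\overline{X},\gamma\overline{Y})\overline{\eta}=0$ with \emph{Berwald's} horizontal map, and the terms $D^{\circ}_{\cdot}\overline{\eta}=K^{\circ}(\cdot)$ in your expansion are Berwald's connection map, not Cartan's. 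The consistency check you offer at the end is circular, since it verifies $K^{\circ}=K$ by means of the very formula being proved.

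The second gap is the sign in the horizontal case. With the convention you state, $\mathbf{K}(X,Y)\overline{Z}=D_{X}D_{Y}\overline{Z}-D_{Y}D_{X}\overline{Z}-D_{[X,Y]}\overline{Z}$, your intermediate identity $\widehat{P}(\overline{X},\overline{Y})=\nabla_{\beta\overline{X}}\overline{Y}-K[\beta\overline{X},\gamma\overline{Y}]$ and its Berwald analogue $0=D^{\circ}_{\beta\overline{X}}\overline{Y}-K[\beta\overline{X},\gamma\overline{Y}]$ subtract to
\[
D^{\circ}_{\beta\overline{X}}\overline{Y}=\nabla_{\beta\overline{X}}\overline{Y}-\widehat{P}(\overline{X},\overline{Y}),
\]
the \emph{opposite} of assertion (b). The theorem as stated is correct in the authors' convention, in which the classical curvature carries the opposite overall sign; one can detect this from the identity $\widehat{P}(\overline{X},\overline{Y})=(\nabla_{\beta\overline{\eta}}T)(\overline{X},\overline{Y})$ quoted from \cite{r96} in this paper: a local-frame computation in your convention yields $\mathbf{K}(\beta\overline{X},\gamma\overline{Y})\overline{\eta}=-(\nabla_{\beta\overline{\eta}}T)(\overline{X},\overline{Y})$, equivalently $G^{i}_{jk}-\Gamma^{*i}_{jk}=+C^{i}_{jk|0}$ while your formula would force a minus sign there. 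So either your expansion of $\widehat{P}$ or your conclusion must be flipped; as written the two are mutually inconsistent. You flagged the sign conventions as the main obstacle, but flagging is not resolving: run honestly in your stated convention, the argument proves a formula different from the one in the theorem.
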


\bigskip
Finally, for a Finsler manifold  $(M,L)$, we use the following
definitions and notations:\vspace{-0.2cm}
\begin{eqnarray*}
% \nonumber to remove numbering (before each equation)
\ell(\overline{X})&:=&L^{-1}g(\overline{X},\overline{\eta}),\\
\hbar&:=& g-\ell \otimes \ell: \text{the  angular metric tensor},\\
T(\overline{X},\overline{Y},\overline{Z})&:
=&g(T(\overline{X},\overline{Y}),\overline{Z}): \text{the  Cartan
 tensor},\\
    C(\overline{X})&:=& Tr\{\overline{Y} \longmapsto
T(\overline{X},\overline{Y})\}: \text{the  contracted torsion},\\
 g(\overline{C}, \overline{X})&:=&C(\overline{X}), \text{$\overline{C}$ is the $\pi$-vector field associated with the $\pi$-form $C$},\\
S \,(resp. \, P, R)&:& \text{the $v$-curvature ($hv$-crvature,
$h$-curvature) tensor of Cartan connection}.\\
 Ric^v(\overline{X},\overline{Y})&:=& Tr\{
\overline{Z} \longmapsto S(\overline{X},\overline{Z})\overline{Y}\}:
\text{the  vertical Ricci tensor},\\
g(Ric_0^v(\overline{X}),\overline{Y})&:=&Ric^v(\overline{X},\overline{Y}):
\text{the vertical Ricci  map} \  Ric_0^v,\\
 Sc^v&:=&\text {Tr}\{ \overline{X} \longmapsto Ric_0^v(\overline{X})\}:
 \text{the vertical scalar curvature}\\
 \stackrel{h}\nabla &:&\text{the $h$-covariant derivative associated
with the Cartan connection},\\
\stackrel{v}\nabla &:&\text{the $v$-covariant derivative associated
with the Cartan connection}.
\end{eqnarray*}

%%%%%%%%%%%%%%%%%%%%%%%%%%%%%%%%%%%%%%%%%%%% 2. Concircular pi-vector fields on Finslar manifolds %%%%%%%%%%%%%%%%%%%%%%%%%%%%%%%%%%%%%%%%%%%%

\Section{Concircular $\pi$-vector fields on a Finsler manifold}
The notion of a concircular vector field has been studied in Riemannian
geometry by Adati and Miyazawa {\cite{conc.r}}. The notion of a
concurrent vector field has been investigated locally (resp.
\emph{{intrinsically}}) in Finsler geometry by Matsumoto and Eguchi
{\cite{r90}}, Tachibana {\cite{con.2}} (resp. Youssef et al. \cite{con.}).  In this
section, we  investigate \emph{intrinsically} the notion of a
concircular $\pi$-vector field in Finsler geometry, which
generalizes the concept of a concircular vector field in Riemannian
geometry and the concept of concurrent vector field in Finsler geometry.

\begin{defn}\label{2.def.1}Let $(M,L)$ be a Finsler manifold.
A $\pi$-vector field
 $\overline{\zeta}(x,y) \in \cp$ is
called a concircular $\pi$-vector field \emph{(}with respect to the
Cartan connection\emph{)} if it satisfies the following
conditions:\vspace{-0.2cm}
\begin{eqnarray}
     \nabla_{\beta \overline{X}}\,\overline{\zeta}&=&\alpha(\overline{X})\overline{\zeta}+ \psi(x)\overline{X} ,\label{11.eq.1}\\
     \nabla_{\gamma \overline{X}}\,\overline{\zeta}&=&0 \label{12.eq.1},
    \end{eqnarray}
where $\alpha(\overline{X}):=d\sigma(\beta\overline{X})$;
$\sigma(x)$ and
$\psi(x)$ are two non-zero scalar functions on $TM$.\\
In particular, if $\sigma(x)$ is constant and $\psi(x)=-1$, then
$\overline{\zeta}$ is a concurrent $\pi$-vector field.
\end{defn}

 The following two Lemmas are useful for subsequence
use.\vspace{-0.2cm}
\begin{lem}\label{2.le.1} Let $(M,L)$ be a Finsler manifold. If $\overline{\zeta}\in \cp$
is a concircular $\pi$-vector field and $\omega \in \ccp$ is the
$\pi$-form defined by $\omega:=i_{\overline{\zeta}}\,g$, then
$\omega$ has the properties:\vspace{-0.2cm}
\begin{description}
    \item[(a)] $(\nabla_{\beta \overline{X}}\omega)(\overline{Y})=\alpha(\overline{X})\omega(\overline{Y})+
     \psi(x)g(\overline{X},\overline{Y})$,

    \item[(b)] $(\nabla_{\gamma\overline{X}}\omega)(\overline{Y})=0$.
\end{description}
\end{lem}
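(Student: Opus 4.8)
The plan is to prove Lemma 2.2 by a direct computation using the metric property of the Cartan connection together with the defining relations of the concircular $\pi$-vector field. The key observation is that $\omega = i_{\overline{\zeta}}\,g$ means $\omega(\overline{Y}) = g(\overline{\zeta},\overline{Y})$ for all $\overline{Y}\in\cp$. Since $\nabla$ is metric (Theorem \ref{th.1}(a)), the covariant derivative of $g$ vanishes, so the covariant derivative of $\omega$ can be transferred entirely onto the argument $\overline{\zeta}$.

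For part (a), I would start from the product-rule expansion of the $h$-covariant derivative:
\begin{equation*}
(\nabla_{\beta \overline{X}}\omega)(\overline{Y}) = \beta\overline{X}\cdot\omega(\overline{Y}) - \omega(\nabla_{\beta\overline{X}}\overline{Y}) = \beta\overline{X}\cdot g(\overline{\zeta},\overline{Y}) - g(\overline{\zeta},\nabla_{\beta\overline{X}}\overline{Y}).
\end{equation*}
Applying $\nabla g = 0$ in the form $\beta\overline{X}\cdot g(\overline{\zeta},\overline{Y}) = g(\nabla_{\beta\overline{X}}\overline{\zeta},\overline{Y}) + g(\overline{\zeta},\nabla_{\beta\overline{X}}\overline{Y})$, the second term cancels and I am left with $(\nabla_{\beta\overline{X}}\omega)(\overline{Y}) = g(\nabla_{\beta\overline{X}}\overline{\zeta},\overline{Y})$. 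Then I substitute the defining equation \eqref{11.eq.1}, namely $\nabla_{\beta\overline{X}}\overline{\zeta} = \alpha(\overline{X})\overline{\zeta} + \psi(x)\overline{X}$, and use bilinearity of $g$ to obtain $\alpha(\overline{X})g(\overline{\zeta},\overline{Y}) + \psi(x)g(\overline{X},\overline{Y}) = \alpha(\overline{X})\omega(\overline{Y}) + \psi(x)g(\overline{X},\overline{Y})$, which is exactly the claimed formula.

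Part (b) follows by the identical argument with $\beta\overline{X}$ replaced by $\gamma\overline{X}$: metric compatibility gives $(\nabla_{\gamma\overline{X}}\omega)(\overline{Y}) = g(\nabla_{\gamma\overline{X}}\overline{\zeta},\overline{Y})$, and then \eqref{12.eq.1} states $\nabla_{\gamma\overline{X}}\overline{\zeta} = 0$, so the right-hand side vanishes identically. I do not anticipate a genuine obstacle here; the only point requiring care is the correct direction of metric compatibility for both the horizontal and vertical covariant derivatives, and the verification that $\nabla g = 0$ indeed applies to both $\beta$-type and $\gamma$-type directional derivatives (which it does, since $\nabla g = 0$ is an identity on all of $\mathfrak{X}(TM)$ and both $\beta\overline{X}$ and $\gamma\overline{X}$ are genuine vector fields on $TM$). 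The rest is routine tensor bookkeeping, and the two parts are entirely parallel, differing only in which defining relation of the concircular field is invoked.
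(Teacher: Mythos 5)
Your proof is correct and follows essentially the same route as the paper: expand $(\nabla_{\beta\overline{X}}\omega)(\overline{Y})$ by the product rule, use metric compatibility $\nabla g=0$ to reduce it to $g(\nabla_{\beta\overline{X}}\overline{\zeta},\overline{Y})$, and substitute the defining relations (\ref{11.eq.1}) and (\ref{12.eq.1}). The only difference is notational (you write the directional derivative of the scalar as $\beta\overline{X}\cdot g(\overline{\zeta},\overline{Y})$ where the paper writes $\nabla_{\beta\overline{X}}g(\overline{\zeta},\overline{Y})$), and you spell out part (b), which the paper simply declares similar.
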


\begin{proof}
~\par \vspace{5pt} \noindent\textbf{(a)} Using the fact that $\nabla
g=0$, we have
\begin{eqnarray*}
  (\nabla_{\beta \overline{X}}\omega)(\overline{Y}) &=&
  \nabla_{\beta \overline{X}}g(\overline{\zeta}, \overline{Y})-g(\overline{\zeta}, \nabla_{\beta \overline{X}}\overline{Y}) \\
  &=&(\nabla_{\beta \overline{X}}g)(\overline{\zeta}, \overline{Y})+g(\nabla_{\beta \overline{X}}\overline{\zeta},
  \overline{Y})\\
   &=&g(\alpha(\overline{X})\overline{\zeta}+
   \psi(x)\overline{X},\overline{Y}).
 \end{eqnarray*}
\noindent\textbf{(b)} The proof is similar to that of \textbf{(a)}.
\end{proof}

\begin{lem}\label{def.ind} Let $(M,L)$ be a Finsler manifold and $ D^{\circ} $
  the Berwald connection on $\p$. Then, we have\vspace{-0.2cm}
 \begin{description}
   \item[(a)] A $\pi$-vector field  $\overline{Y} \in \cp$ is independent of the directional argument
 $y$   if,  and only if,  $D^{\circ}_{\gamma \overline{X}}\overline{Y}=0
 $ for all $\overline{X} \in \cp$,
   \item[(b)] A scalar \emph{(}vector\emph{)} $\pi$-form  $A $ is independent of the directional argument
 $y$ if,  and only if, $D^{\circ}_{\gamma \overline{X}}\, A=0$ for all $\overline{X} \in \cp$.
 \end{description}
 \end{lem}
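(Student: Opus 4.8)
The plan is to reduce both statements to the single fact that, for the Berwald connection, the vertical covariant derivative $D^{\circ}_{\gamma \overline{X}}$ is nothing but fibrewise (directional-argument) differentiation. Granting this, a $\pi$-object is constant along the fibres of $\pi$ — that is, independent of $y$ — exactly when all its vertical derivatives vanish, which is precisely the asserted equivalence.

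First I would fix a point $x \in M$ and restrict all data to the single fibre $T_x M \subset TM$. Over this fibre the pulled-back bundle $\p$ is canonically the trivial bundle with constant fibre $T_x M$, and the coordinate sections $\partial_i = \partial/\partial x^{i}|_{x}$ — which do not vary as $y$ varies along the fibre — form a global parallel frame for the canonical flat connection of this trivial bundle. The crucial point, which I isolate below as the main obstacle, is that the restriction of $D^{\circ}$ to vertical directions agrees with this canonical flat connection; equivalently, the vertical coefficients of the Berwald connection vanish. Accepting this, $D^{\circ}_{\gamma \overline{X}} \overline{Y}$ restricted to $T_x M$ is exactly the ordinary directional derivative, in the direction $\overline{X}$, of the smooth map $\overline{Y}|_{T_x M} : T_x M \to T_x M$.

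For \textbf{(a)} the equivalence is then immediate and both implications are formal. If $\overline{Y}$ is independent of $y$, then $\overline{Y}|_{T_x M}$ is constant for every $x$, so its directional derivatives vanish and $D^{\circ}_{\gamma \overline{X}} \overline{Y} = 0$ for all $\overline{X}$. Conversely, if $D^{\circ}_{\gamma \overline{X}} \overline{Y} = 0$ for all $\overline{X}$, then the directional derivative of $\overline{Y}|_{T_x M}$ vanishes in every direction at every point of the (connected) fibre $T_x M$, forcing $\overline{Y}|_{T_x M}$ to be constant; as $x$ is arbitrary, $\overline{Y}$ is independent of $y$. Part \textbf{(b)} follows from the same argument, now using that $D^{\circ}_{\gamma \overline{X}}$ acts as a derivation on $\pi$-tensors and restricts, on each fibre, to componentwise differentiation relative to the flat frame $\partial_i$: for a scalar $\pi$-form this is genuine differentiation of its function-components $(\gamma \overline{X}) A_{i_1 \cdots i_p}$, while for a vector $\pi$-form one differentiates the components expressed in the parallel frame. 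Hence $D^{\circ}_{\gamma \overline{X}} A = 0$ for all $\overline{X}$ if and only if all components of $A$ are constant along each fibre, i.e. if and only if $A$ is independent of $y$.

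The one step carrying real content is the identification of $D^{\circ}$ along $V(TM)$ with fibrewise flat differentiation. I would derive it from the characterization of the Berwald connection in Theorem \ref{th.1a} (torsion-freeness together with $\widehat{P^{\circ}} = 0$), which pins $D^{\circ}$ down and forces its vertical–vertical coefficients to vanish; alternatively, and perhaps most transparently, from Theorem \ref{2.th.1}\textbf{(a)}, $D^{\circ}_{\gamma \overline{X}} \overline{Y} = \nabla_{\gamma \overline{X}} \overline{Y} - T(\overline{X}, \overline{Y})$, since the vertical covariant derivative of the Cartan connection differs from naive vertical differentiation exactly by the Cartan-tensor term $T(\overline{X}, \overline{Y})$, which is here subtracted off. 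In natural coordinates this is immediate: the Berwald vertical coefficients are identically zero, so $D^{\circ}_{\gamma \overline{X}} \overline{Y}$ has components $\overline{X}^{j}\, \paa_{j} Y^{i}$, and both equivalences can simply be read off. Everything else is routine.
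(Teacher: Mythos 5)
Your proposal is correct and follows essentially the same route as the paper: both hinge on the single fact that $D^{\circ}_{\gamma \overline{X}}\overline{Y}$ is naive fibrewise differentiation, which the paper establishes from Theorem \ref{2.th.1}(a) by computing $\nabla_{\gamma\overline{X}}\overline{Y}-T(\overline{X},\overline{Y})=\rho[\gamma\overline{X},\beta\overline{Y}]=X^i(\dot{\partial}_iY^j)\overline{\partial}_j$ in natural coordinates, exactly the identification you isolate as the key step (your remark that the Cartan-tensor term cancels the vertical Cartan coefficients is the same computation). Your fibrewise-restriction framing and the treatment of part (b) via vanishing vertical derivatives of the frame $\overline{\partial}_i$ are just a mild repackaging of the paper's coordinate argument, so nothing essentially new or missing.
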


\begin{proof}. We prove \textbf{(a)} only; the proof of \textbf{(b)} is similar.
Let $\overline{X}=X^i\overline{\partial}_i, \,\overline{Y}=Y^j\overline{\partial}_j$. Then,
\begin{eqnarray*}
D^{\circ}_{\gamma \overline{X}}\overline{Y}&=&\nabla _{\gamma
  \overline{X}}\overline{Y}-T(\overline{X},\overline{Y})=\rho[\gamma \overline{X},\beta\overline{ Y}]\\
~&=& \rho[X^i\gamma(\overline{\partial}_i), Y^j\beta(\overline{\partial}_j)]=\rho[X^i{\dot{\partial}}_i, Y^j\delta_j]\\
~&=& X^iY^j\rho[\dot{\partial}_i,\delta_j]+X^i(\dot{\partial}_iY^j)\rho(\delta_j)\\
~&& -Y^j\delta_j(X^i)\rho(\dot{\partial}_i),
\end{eqnarray*}
where $\partial_i=\frac{\partial}{\partial x^i}, \,\dot{\partial}_i=\frac{\partial}{\partial y^i}$ and $ \delta_i,\,\, \overline{\partial}_i$ are respectively the bases of the horizontal space and the pullback fibre. As $\rho(\partial_i)=\overline{\partial}_i, \,\rho(\dot{\partial}_i)=0, \,\, \rho(\delta_i)=\overline{\partial}_i$, we have $D^{\circ}_{\gamma \overline{X}}\overline{Y}=X^i(\dot{\partial}_iY^j)\overline{\partial}_j$, and so
\begin{eqnarray*}
  D^{\circ}_{\gamma \overline{X}}\overline{Y}=0\,\,\, \forall \overline{X} &\Longleftrightarrow& X^i(\dot{\partial}_iY^j)\overline{\partial}_j=0
  \,\,\,\forall X^i\\
   ~&\Longleftrightarrow& \dot{\partial}_iY^j=0 \,\,\,\forall i,j\\
   ~&\Longleftrightarrow& \overline{Y}\,\,\text{is independent of }y
\end{eqnarray*}
\end{proof}

\begin{rem}\label{rem.1} \em{From Definition \ref{2.def.1}, Lemma \ref{def.ind}   and Theorem \ref{2.th.1},  we conclude that
\begin{description}
   \item[(a)]  $d\psi(\gamma \overline{X})=D^{\circ}_{\gamma \overline{X}}\psi(x)=\nabla_{\gamma \overline{X}}\,\psi(x)=0$.
  \item[(b)] $(D^{\circ}_{\gamma \overline{X}}\alpha)(\overline{Y})=(\nabla_{\gamma \overline{X}}\alpha)(\overline{Y})+\alpha(T(\overline{X},\overline{Y}))=0$.
   \item[(c)] $(D^{\circ}_{\gamma \overline{X}}\mu)(\overline{Y})=(\nabla_{\gamma \overline{X}}\mu)(\overline{Y})+\mu(T(\overline{X},\overline{Y}))=0$,
 \end{description}
 where $\mu(\overline{X}):=d\psi({\beta \overline{X}})$.}
\end{rem}

 Now, we have the following \vspace{-0.2cm}
\begin{thm}\label{2.pp.1}Let $\overline{\zeta} \in \cp$ be
a concircular $\pi$-vector field on $(M,L)$.\\
For the v-curvature tensor $S$, the following relations
hold\,\emph{:}
\begin{description}
    \item[(a)]$S(\overline{X},\overline{Y})\,\overline{\zeta}=0$, \quad $S(\overline{X}, \overline{Y}, \overline{Z}, \overline{\zeta})=0$.
    \item[(b)]$(\nabla_{\gamma \overline{Z}}S)(\overline{X},\overline{Y},
    \overline{\zeta})=0$.
\item[(c)] $(\nabla_{\beta\overline{Z}}S)(\overline{X},\overline{Y}, \overline{\zeta})=
    -\psi(x)S(\overline{X},\overline{Y})\overline{Z}$.
     \item[(d)]  $(\nabla_{\beta \overline{\zeta}}S)(\overline{X},\overline{Y}, \overline{\zeta})=0$.
\end{description}
For the hv-curvature tensor $P$, the following relations
hold\,\emph{:}
\begin{description}
    \item[(e)]$P(\overline{X},\overline{Y})\,\overline{\zeta}=\psi(x)T(\overline{X},\overline{Y})$, \quad
      $P(\overline{X}, \overline{Y}, \overline{Z}, \overline{\zeta})=-\psi(x)T(\overline{X}, \overline{Y},\overline{Z})$.
    \item[(f)] $(\nabla_{\gamma \overline{Z}}P)(\overline{X},\overline{Y}, \overline{\zeta})
    =\psi(x)(\nabla_{\gamma
    \overline{Z}}T)(\overline{X},\overline{Y})$.

 \item[(g)]
      $(\nabla_{\beta \overline{Z}}P)(\overline{X},\overline{Y},
      \overline{\zeta})=(\mu({\overline{Z}})-\psi(x)\alpha(\overline{Z}))T(\overline{X},\overline{Y})$\\
      ${\qquad\qquad\qquad\quad\,\,}+\psi(x)(\nabla_{\beta \overline{Z}}T)(\overline{X},\overline{Y})-\psi(x)P(\overline{X},\overline{Y})\overline{Z}$.

   \item[(h)] $(\nabla_{\beta \overline{\zeta}}P)(\overline{X},\overline{Y},
    \overline{\zeta})=(\mu({\overline{\zeta}})-\psi(x)\alpha(\overline{\zeta})-\psi^{2}(x))T(\overline{X},\overline{Y})
      +\psi(x)(\nabla_{\beta
      \overline{\zeta}}T)(\overline{X},\overline{Y})$.
\end{description}
For the h-curvature tensor $R$, the following relations hold
\footnote{$\mathfrak{A}_{\overline{X},\overline{Y}}\set{A(\overline{X},\overline{Y})}$
denotes the alternate sum
$A(\overline{X},\overline{Y})-A(\overline{Y},\overline{X})$.}\,\emph{:}
\begin{description}
    \item[(i)]$R(\overline{X},\overline{Y})\,\overline{\zeta}=
    \mathfrak{A}_{\overline{X},\overline{Y}}\set{(\mu(\overline{Y})-\psi(x)\alpha(\overline{Y}))\overline{X}}$.

     \item[(j)]$R(\overline{X}, \overline{Y}, \overline{Z}, \overline{\zeta})=
     \mathfrak{A}_{\overline{X},\overline{Y}}\set{(\mu(\overline{X})-\psi(x)\alpha(\overline{X}))g(\overline{Y},\overline{Z})}$.
     \item[(k)]$(\nabla_{\gamma \overline{Z}}R)(\overline{X},\overline{Y},
     \overline{\zeta})=\mathfrak{A}_{\overline{X},\overline{Y}}\set{[\mu(T(\overline{Z},\overline{Y}))-\alpha(T(\overline{Z},\overline{Y}))]\overline{X}}$.
 \item[(l)]  $(\nabla_{\beta \overline{Z}}R)(\overline{X},\overline{Y}, \overline{\zeta})=
 \mathfrak{A}_{\overline{X},\overline{Y}}\set{((\nabla_{\beta \overline{Z}}\mu)(\overline{Y})-\psi(x)(\nabla_{\beta \overline{Z}}\alpha)(\overline{Y})
 +\psi(x)\alpha(\overline{Z})\alpha(\overline{Y}))\overline{X}}$\\
 ${\quad\qquad\qquad\qquad}-\mathfrak{A}_{\overline{X},\overline{Y}}\set{(\mu(\overline{Z})\alpha(\overline{Y})
 +\alpha(\overline{Z})\mu(\overline{Y}))\overline{X}}-\psi(x)R(\overline{X},\overline{Y})\overline{Z}$.
 \item[(m)] $(\nabla_{\beta\overline{\zeta}}R)(\overline{X},\overline{Y},
 \overline{\zeta})=
 \mathfrak{A}_{\overline{X},\overline{Y}}\set{((\nabla_{\beta \overline{\zeta}}\mu)(\overline{Y})
 -\psi(x)(\nabla_{\beta \overline{\zeta}}\alpha)(\overline{Y}))\overline{X}}$\\
 ${\quad\qquad\qquad\qquad} - \mathfrak{A}_{\overline{X},\overline{Y}}\set{(\mu(\overline{\zeta})\alpha(\overline{Y})
 +\alpha(\overline{\zeta})\mu(\overline{Y}))\overline{X}}$\\
 ${\quad\qquad\qquad\qquad} +\mathfrak{A}_{\overline{X},\overline{Y}}\set{{\psi(x)\alpha(\overline{\zeta})\alpha(\overline{Y})
 -\psi(x)\mu(\overline{Y})+\psi^{2}(x)\alpha(\overline{Y})}\overline{X}}$.
\end{description}
\end{thm}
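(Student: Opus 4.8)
The plan is to organise the twelve identities into three blocks—one for each curvature tensor $S$, $P$, $R$—and, inside each block, to establish the \emph{base} evaluation formula (a), (e), (i) first by a direct expansion, and then to read off every remaining formula by covariantly differentiating that base formula. The only tools required are: the definitions $S(\overline{X},\overline{Y})\overline{\zeta}=\textbf{K}(\gamma\overline{X},\gamma\overline{Y})\overline{\zeta}$, $P(\overline{X},\overline{Y})\overline{\zeta}=\textbf{K}(\beta\overline{X},\gamma\overline{Y})\overline{\zeta}$, $R(\overline{X},\overline{Y})\overline{\zeta}=\textbf{K}(\beta\overline{X},\beta\overline{Y})\overline{\zeta}$; the Ricci identity $\textbf{K}(U,V)\overline{\zeta}=\nabla_{U}\nabla_{V}\overline{\zeta}-\nabla_{V}\nabla_{U}\overline{\zeta}-\nabla_{[U,V]}\overline{\zeta}$; the two defining equations \eqref{11.eq.1}, \eqref{12.eq.1}; Remark \ref{rem.1}; and the antisymmetry $g(S(\cdots)\overline{Z},\overline{W})=-g(S(\cdots)\overline{W},\overline{Z})$ (and its analogues for $P,R$) coming from $\nabla g=0$.

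For (a) I would feed $\textbf{K}(\gamma\overline{X},\gamma\overline{Y})\overline{\zeta}$ into the Ricci identity. The two second-derivative terms vanish at once since $\nabla_{\gamma\overline{X}}\overline{\zeta}=\nabla_{\gamma\overline{Y}}\overline{\zeta}=0$ by \eqref{12.eq.1}; and because the vertical distribution is integrable, $[\gamma\overline{X},\gamma\overline{Y}]$ is again vertical, i.e. of the form $\gamma(\cdot)$, so the bracket term dies by \eqref{12.eq.1} as well. This gives $S(\overline{X},\overline{Y})\overline{\zeta}=0$, and the scalar version follows from the antisymmetry of $S$ in its last two slots. Formulas (b)--(d) are then formal: differentiating the tensor identity $S(\cdot,\cdot)\overline{\zeta}=0$ by $\nabla_{\gamma\overline{Z}}$ (resp. $\nabla_{\beta\overline{Z}}$), expanding by the product rule, and reinserting (a) together with \eqref{12.eq.1}, \eqref{11.eq.1}, yields $(\nabla_{\gamma\overline{Z}}S)(\overline{X},\overline{Y})\overline{\zeta}=0$ for (b) and the single surviving term $(\nabla_{\beta\overline{Z}}S)(\overline{X},\overline{Y})\overline{\zeta}=-\psi(x)S(\overline{X},\overline{Y})\overline{Z}$ for (c); (d) is the specialisation $\overline{Z}=\overline{\zeta}$ of (c).

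The genuine computations are the base formulas (e) and (i). For (e) I would expand $\textbf{K}(\beta\overline{X},\gamma\overline{Y})\overline{\zeta}$, kill the term carrying $\nabla_{\gamma\overline{Y}}\overline{\zeta}$, and substitute \eqref{11.eq.1} into the remaining second-derivative term. The bracket $[\beta\overline{X},\gamma\overline{Y}]$ is split into horizontal and vertical parts via the torsion relations: the mixed-torsion identity gives $\rho[\beta\overline{X},\gamma\overline{Y}]=-\nabla_{\gamma\overline{Y}}\overline{X}+T(\overline{X},\overline{Y})$, while its $\gamma$-part is annihilated by \eqref{12.eq.1}. After substitution the terms proportional to $\nabla_{\gamma\overline{Y}}\overline{X}$ cancel, a multiple of $\psi(x)T(\overline{X},\overline{Y})$ survives, and the coefficient of $\overline{\zeta}$ collapses to $-(\nabla_{\gamma\overline{Y}}\alpha)(\overline{X})-\alpha(T(\overline{X},\overline{Y}))$, which is exactly zero by Remark \ref{rem.1}(b). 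Formula (i) is the same computation for $\textbf{K}(\beta\overline{X},\beta\overline{Y})\overline{\zeta}$, now using $Q=0$ so that $\rho[\beta\overline{X},\beta\overline{Y}]=\nabla_{\beta\overline{X}}\overline{Y}-\nabla_{\beta\overline{Y}}\overline{X}$ while $K[\beta\overline{X},\beta\overline{Y}]=-\widehat{R}(\overline{X},\overline{Y})$ has $\gamma$-image killed by \eqref{12.eq.1}; here the coefficient of $\overline{\zeta}$ reduces to $(\nabla_{\beta\overline{X}}\alpha)(\overline{Y})-(\nabla_{\beta\overline{Y}}\alpha)(\overline{X})$, which vanishes because $\alpha=d\sigma\circ\beta$ with $\sigma=\sigma(x)$ basic, so its horizontal Hessian is symmetric (using $d(d\sigma)=0$ and that $d\sigma$ annihilates vertical vectors). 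The scalar versions (second part of (e), and (j)) then follow from metric antisymmetry in the last two arguments.

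Finally, each remaining derivative formula—(f),(g),(h) for $P$ and (k),(l),(m) for $R$—comes from one mechanical step: differentiate the base identity by $\nabla_{\gamma\overline{Z}}$ or $\nabla_{\beta\overline{Z}}$, expand both sides by the product rule, reinsert the base identity into the error terms $P(\nabla\overline{X},\overline{Y})\overline{\zeta},\,R(\nabla\overline{X},\overline{Y})\overline{\zeta}$, and apply \eqref{12.eq.1} or \eqref{11.eq.1} to $\nabla_{\gamma\overline{Z}}\overline{\zeta}$ / $\nabla_{\beta\overline{Z}}\overline{\zeta}$. The covariant derivatives of $\alpha,\mu,\psi$ are evaluated through Remark \ref{rem.1}: vertically they produce the $\alpha(T)$, $\mu(T)$ contractions of (k),(f), while horizontally the extra pieces $\mu(\overline{Z})$, $\alpha(\overline{Z})\mu(\overline{Y})$, $\psi\alpha(\overline{Z})\alpha(\overline{Y})$ appear; the term $\psi\,R(\overline{X},\overline{Y})\overline{Z}$ in (l) (resp. $\psi\,P(\overline{X},\overline{Y})\overline{Z}$ in (g)) is exactly what \eqref{11.eq.1} contributes through $R(\overline{X},\overline{Y})\nabla_{\beta\overline{Z}}\overline{\zeta}$. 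Then (h) and (m) are the cases $\overline{Z}=\overline{\zeta}$ of (g) and (l), where (e) resp. (i) is used once more to convert $P(\overline{X},\overline{Y})\overline{\zeta}$, $R(\overline{X},\overline{Y})\overline{\zeta}$ into the explicit $-\psi^{2}(x)T$ and $\psi^{2}(x)\alpha$ terms. I expect the main obstacle to be purely bookkeeping: keeping the non-tensorial pieces (the $\nabla\overline{X},\nabla\overline{Y}$ terms and the scalar-coefficient derivatives) matched so that they cancel in pairs, and fixing once and for all the sign convention of $\textbf{K}$ from \cite{r86} so that the $\psi T$ and $\mu-\psi\alpha$ signs come out as stated.
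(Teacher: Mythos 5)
Your proposal is correct, and the computations you outline do go through: the $S$-block is immediate from \eqref{12.eq.1} plus integrability of the vertical distribution; the base formulas (e), (i) follow from the Ricci identity once the brackets are split by the torsion relations ($T$ for the mixed bracket, $Q=0$ for the horizontal one); and every derivative formula (b)--(d), (f)--(h), (k)--(m) then drops out of the product rule combined with Remark \ref{rem.1} and \eqref{11.eq.1}, \eqref{12.eq.1}, with (h), (m) the specializations $\overline{Z}=\overline{\zeta}$ of (g), (l). The paper itself never carries out this work: its proof is a single sentence deferring the needed curvature identities to \cite{r96} and listing the ingredients (Definition \ref{2.def.1}, Remark \ref{rem.1}, vanishing of the (h)h-torsion). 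So your route is the self-contained version of what the paper outsources, and it surfaces two facts the paper leaves implicit: the $\overline{\zeta}$-coefficient in (i) vanishes because the horizontal Hessian of the basic function $\sigma$ is symmetric (your $d(d\sigma)=0$ argument plus $d\sigma\circ\gamma=0$, which itself uses $Q=0$), and the scalar versions in (a), (e), (j) rest on skew-adjointness of the curvature operators, a consequence of $\nabla g=0$. Two caveats, neither of which is a gap in your argument. First, with the usual convention $\textbf{K}(X,Y)=\nabla_X\nabla_Y-\nabla_Y\nabla_X-\nabla_{[X,Y]}$ the base formulas come out with the opposite sign to the paper's, so you are right that the sign convention of \cite{r96} must be imported; this is pure bookkeeping, as you say. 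Second, your mechanical procedure applied to (i) yields for item (k) the expression $\mathfrak{A}_{\overline{X},\overline{Y}}\set{\left(\psi(x)\alpha(T(\overline{Z},\overline{Y}))-\mu(T(\overline{Z},\overline{Y}))\right)\overline{X}}$ (up to that overall sign), not the formula printed in the theorem, which lacks the factor $\psi(x)$ on $\alpha(T(\overline{Z},\overline{Y}))$; the printed (k) appears to contain a typographical error, and your derivation produces the corrected form.
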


\begin{proof}
The proof follows from the properties of the curvature tensors $S$,
$P$ and $R$, investigated in \cite{r96}, together with Definition
\ref{2.def.1} and Remark \ref{rem.1}, taking into account the fact
that the (h)h-torsion of the Cartan connection vanishes.
\end{proof}

 In view of the above theorem, we retrieve a result of \cite{con.} concerning concurrent $\pi$-vector fields\vspace{-5pt}.
 \begin{cor}\label{2.pp.1c}Let $\overline{\zeta} \in \cp$ be
a concurrent $\pi$-vector field on $(M,L)$.\\
For the v-curvature tensor $S$, the following relations
hold\,\emph{:}
\begin{description}
    \item[(a)]$S(\overline{X},\overline{Y})\,\overline{\zeta}=0$, \quad $S(\overline{X}, \overline{Y}, \overline{Z}, \overline{\zeta})=0$.
    \item[(b)]$(\nabla_{\gamma \overline{Z}}S)(\overline{X},\overline{Y}, \overline{\zeta})=0$, \quad
    $(\nabla_{\beta\overline{Z}}S)(\overline{X},\overline{Y}, \overline{\zeta})=
    S(\overline{X},\overline{Y})\overline{Z}$.
     \item[(c)]  $(\nabla_{\beta \overline{\zeta}}S)(\overline{X},\overline{Y}, \overline{\zeta})=0$.
\end{description}
For the hv-curvature tensor $P$, the following relations
hold\,\emph{:}
\begin{description}
    \item[(d)]$P(\overline{X},\overline{Y})\,\overline{\zeta}=-T(\overline{Y},\overline{X})$, \quad
      $P(\overline{X}, \overline{Y}, \overline{Z}, \overline{\zeta})=T(\overline{X}, \overline{Y},\overline{Z})$.
    \item[(e)] $(\nabla_{\gamma \overline{Z}}P)(\overline{X},\overline{Y}, \overline{\zeta})
    =-(\nabla_{\gamma \overline{Z}}T)(\overline{Y}, \overline{X})$,\\    ${\!\!\!\!}(\nabla_{\beta \overline{Z}}P)(\overline{X},\overline{Y}, \overline{\zeta})=-(\nabla_{\beta \overline{Z}}T)(\overline{Y},\overline{X})+
  P(\overline{X},\overline{Y})\overline{Z}$.
   \item[(f)] $(\nabla_{\beta \overline{\zeta}}P)(\overline{X},\overline{Y},
    \overline{\zeta})=-(\nabla_{\beta \overline{\zeta}}T)(\overline{Y},\overline{X})
    -T(\overline{Y},\overline{X})$.
\end{description}
For the h-curvature tensor $R$, the following relations
hold\,\emph{:}
\begin{description}
    \item[(g)]$R(\overline{X},\overline{Y})\,\overline{\zeta}=0$, \quad
     $R(\overline{X}, \overline{Y}, \overline{Z}, \overline{\zeta})=0$.
     \item[(h)]$(\nabla_{\gamma \overline{Z}}R)(\overline{X},\overline{Y}, \overline{\zeta})=0$, \quad
     $(\nabla_{\beta \overline{Z}}R)(\overline{X},\overline{Y}, \overline{\zeta})=
     R(\overline{X},\overline{Y})\overline{Z}$.
 \item[(i)] $(\nabla_{\beta\overline{\zeta}}R)(\overline{X},\overline{Y}, \overline{\zeta})=0$.
\end{description}
\end{cor}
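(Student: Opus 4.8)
The plan is to deduce the Corollary directly from Theorem \ref{2.pp.1} by specializing to the concurrent case. By the last sentence of Definition \ref{2.def.1}, a concurrent $\pi$-vector field is precisely a concircular $\pi$-vector field for which $\sigma(x)$ is constant and $\psi(x)=-1$. Hence the first step is to record the consequences of this specialization for the two defining $\pi$-forms: since $\alpha(\overline{X})=d\sigma(\beta\overline{X})$ and $\sigma$ is constant, we get $\alpha=0$; and since $\mu(\overline{X})=d\psi(\beta\overline{X})$ with $\psi\equiv -1$ constant, we get $\mu=0$. Thus throughout we may set $\alpha\equiv 0$, $\mu\equiv 0$, and $\psi(x)=-1$ (so $\psi^{2}(x)=1$) in every formula of Theorem \ref{2.pp.1}.

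The second step is the substitution itself, carried out item by item. For the $v$-curvature, parts (a)--(d) of Theorem \ref{2.pp.1} give at once parts (a)--(c) of the Corollary, the only change being that (c) of the Theorem becomes $(\nabla_{\beta\overline{Z}}S)(\overline{X},\overline{Y},\overline{\zeta})=S(\overline{X},\overline{Y})\overline{Z}$ after inserting $\psi=-1$. For the $hv$-curvature, parts (e)--(h) of the Theorem yield (d)--(f) of the Corollary upon setting $\alpha=\mu=0$ and $\psi=-1$; here one uses the symmetry of the Cartan tensor $T$ (recorded just after Theorem \ref{th.1}) to rewrite $\psi(x)T(\overline{X},\overline{Y})=-T(\overline{X},\overline{Y})=-T(\overline{Y},\overline{X})$, and likewise for the differentiated versions, matching the argument order stated in the Corollary. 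For the $h$-curvature, parts (i)--(k) collapse to zero because their right-hand sides are built entirely from $\alpha$ and $\mu$, giving (g) and the first half of (h); part (l) reduces to $-\psi(x)R(\overline{X},\overline{Y})\overline{Z}=R(\overline{X},\overline{Y})\overline{Z}$, the second half of (h); and part (m) reduces to zero, giving (i).

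The only point that needs a word of care is in items (l) and (m) of the Theorem, which involve the covariant derivatives $(\nabla_{\beta\overline{Z}}\mu)$, $(\nabla_{\beta\overline{Z}}\alpha)$, $(\nabla_{\beta\overline{\zeta}}\mu)$ and $(\nabla_{\beta\overline{\zeta}}\alpha)$. Since in the concurrent case $\alpha$ and $\mu$ vanish identically as $\pi$-forms, their $h$-covariant derivatives vanish as well, so every term carrying such a factor drops out and only the curvature term survives. With this observation the specialization is purely mechanical; there is no real obstacle, the work being entirely bookkeeping together with the symmetry of $T$.
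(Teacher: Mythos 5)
Your proposal is correct and follows exactly the paper's own route: the paper proves the Corollary in one line by specializing Theorem \ref{2.pp.1} to $\sigma(x)$ constant and $\psi(x)=-1$, which is precisely your substitution $\alpha\equiv 0$, $\mu\equiv 0$, $\psi=-1$. Your item-by-item verification (including the use of the symmetry of $T$ to match argument orders and the observation that the covariant derivatives of the identically vanishing forms $\alpha$ and $\mu$ drop out) simply makes explicit the bookkeeping the paper leaves to the reader.
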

\begin{proof}
The proof follows from Theorem \ref{2.pp.1} by letting $\sigma(x)$  be a constant function on $M$
and $\psi(x)=-1$.
\end{proof}

\begin{prop}\label{2.pp.2}Let $\overline{\zeta}$ be
a concircular $\pi$-vector field. For every $ \overline{X},
\overline{Y} \in \cp$, we have:\vspace{-0.2cm}
\begin{description}
   \item[(a)]$T(\overline{X},\overline{\zeta})=T(\overline{\zeta}, \overline{X})=0$,
   \item[(b)] $\widehat{P}(\overline{X},\overline{\zeta})
   =\widehat{P}(\overline{\zeta}, \overline{X})=0$,
    \item[(c)] $\widehat{R}(\overline{X},\overline{\zeta})
   =K[\beta \overline{X}, \beta \overline{\zeta}]$,
  \item[(d)] $P(\overline{X},\overline{\zeta})\overline{Y}=P(\overline{\zeta},\overline{X})\overline{Y}=0$.
\item[(e)]$\mathfrak{A}_{\overline{X},\overline{Y}}\set{(\mu(\overline{Y})-\psi(x)\alpha(\overline{Y}))\omega(\overline{X})}=0$.
\item[(f)]$\mu(T(\overline{X},\overline{Y}))=\psi(x)\alpha(T(\overline{X},\overline{Y}))$.
\end{description}
\end{prop}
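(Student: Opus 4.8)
The plan is to read off (a), (e) and (f) as purely algebraic consequences of Theorem~\ref{2.pp.1}, and to obtain (b)--(d) from the definitions of the curvature tensors together with (a). The one fact I will use repeatedly is that, since $\nabla g=0$, each of $S$, $P$, $R$ is skew-symmetric in its last two metric arguments; e.g. $P(\overline{X},\overline{Y},\overline{Z},\overline{W})=-P(\overline{X},\overline{Y},\overline{W},\overline{Z})$. For (a), I set $\overline{Z}=\overline{\zeta}$ in Theorem~\ref{2.pp.1}(e), namely $P(\overline{X},\overline{Y},\overline{Z},\overline{\zeta})=-\psi(x)\,T(\overline{X},\overline{Y},\overline{Z})$. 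The left-hand side is then $P(\overline{X},\overline{Y},\overline{\zeta},\overline{\zeta})=0$ by skew-symmetry, so $\psi(x)\,T(\overline{X},\overline{Y},\overline{\zeta})=0$; as $\psi(x)\neq0$ this gives $T(\overline{X},\overline{Y},\overline{\zeta})=0$, and the total symmetry of the Cartan tensor (Theorem~\ref{th.1}(c) with the symmetry of $T$) rewrites this as $g(T(\overline{X},\overline{\zeta}),\overline{Y})=0$ for all $\overline{Y}$, i.e. $T(\overline{X},\overline{\zeta})=T(\overline{\zeta},\overline{X})=0$.

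The same device yields (e): putting $\overline{Z}=\overline{\zeta}$ in Theorem~\ref{2.pp.1}(j) and using $g(\overline{Y},\overline{\zeta})=\omega(\overline{Y})$ gives $R(\overline{X},\overline{Y},\overline{\zeta},\overline{\zeta})=\mathfrak{A}_{\overline{X},\overline{Y}}\set{(\mu(\overline{X})-\psi(x)\alpha(\overline{X}))\omega(\overline{Y})}$, whose left side vanishes by skew-symmetry of $R$; this is exactly (e). For (f) I would exploit (e): it asserts that the $\pi$-form $\nu:=\mu-\psi(x)\alpha$ satisfies $\nu(\overline{Y})\omega(\overline{X})=\nu(\overline{X})\omega(\overline{Y})$, so (since $\omega\neq0$) $\nu=\lambda\,\omega$ for some scalar $\lambda$. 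Then $\nu(T(\overline{X},\overline{Y}))=\lambda\,g(\overline{\zeta},T(\overline{X},\overline{Y}))=\lambda\,T(\overline{X},\overline{Y},\overline{\zeta})=0$ by (a), which is precisely $\mu(T(\overline{X},\overline{Y}))=\psi(x)\alpha(T(\overline{X},\overline{Y}))$. Statement (c) is read directly off the definitions: $\widehat{R}(\overline{X},\overline{\zeta})=R(\overline{X},\overline{\zeta})\overline{\eta}=\textbf{K}(\beta\overline{X},\beta\overline{\zeta})\overline{\eta}$, and since $\beta\overline{X}$, $\beta\overline{\zeta}$ are horizontal we have $\nabla_{\beta\overline{X}}\overline{\eta}=K(\beta\overline{X})=0$ and likewise for $\beta\overline{\zeta}$; the two second-order terms in $\textbf{K}$ drop out and only the bracket term $K[\beta\overline{X},\beta\overline{\zeta}]$ survives.

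The substantive part is (d), from which (b) follows at once by contracting with $\overline{\eta}$, since $\widehat{P}(\overline{X},\overline{\zeta})=P(\overline{X},\overline{\zeta})\overline{\eta}$ and $\widehat{P}(\overline{\zeta},\overline{X})=P(\overline{\zeta},\overline{X})\overline{\eta}$. Here $\overline{\zeta}$ occupies a curvature-argument slot rather than the differentiated slot, so the defining equations \eqref{11.eq.1}, \eqref{12.eq.1} do not simplify $P$ directly; instead I would differentiate (a). From $T(\overline{X},\overline{\zeta})=0$ together with \eqref{11.eq.1}, \eqref{12.eq.1} one obtains $(\stackrel{v}\nabla_{\overline{W}}T)(\overline{X},\overline{\zeta})=-T(\overline{X},\nabla_{\gamma\overline{W}}\overline{\zeta})=0$ and $(\stackrel{h}\nabla_{\overline{W}}T)(\overline{X},\overline{\zeta})=-T(\overline{X},\nabla_{\beta\overline{W}}\overline{\zeta})=-\psi(x)\,T(\overline{X},\overline{W})$. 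Substituting these, along with (a) itself, into the expressions for $P$ and $\widehat{P}$ in terms of $T$, $\stackrel{h}\nabla T$ and $\stackrel{v}\nabla T$ established in \cite{r96} makes all the resulting terms cancel, giving $P(\overline{X},\overline{\zeta})\overline{Y}=P(\overline{\zeta},\overline{X})\overline{Y}=0$. I expect this cancellation to be the main obstacle: unlike (a), (e), (f), (c), it cannot be settled by the skew-symmetries alone but needs the precise form of the $hv$-curvature identities of \cite{r96}; everything else is formal.
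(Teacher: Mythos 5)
Your proposal is correct and takes essentially the same route as the paper: items (a), (e), (f) come from Theorem \ref{2.pp.1}(e),(j) together with skew-adjointness of the curvature operators in the last two arguments (which the paper cites from \cite{r96} as $g(P(\overline{X},\overline{Y})\overline{Z},\overline{Z})=0$ and $g(R(\overline{X},\overline{Y})\overline{Z},\overline{Z})=0$, and you derive from $\nabla g=0$), while (b), (d) rest on the same two identities from \cite{r96}, namely $\widehat{P}=(\nabla_{\beta\overline{\eta}}T)$ and the expression of $P$ in terms of $\nabla_{\beta}T$ and $\widehat{P}$. The only organizational difference is that you prove (d) first and read off (b) by contracting with $\overline{\eta}$, whereas the paper establishes (b) directly by expanding $(\nabla_{\beta\overline{\eta}}T)(\overline{\zeta},\overline{Y})$ and then feeds it into the proof of (d); your derivative formulas $(\nabla_{\beta\overline{W}}T)(\overline{X},\overline{\zeta})=-\psi(x)T(\overline{X},\overline{W})$ and $(\nabla_{\gamma\overline{W}}T)(\overline{X},\overline{\zeta})=0$ produce exactly the cancellations the paper's computation performs.
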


\begin{proof}
~\par \vspace{5pt} \noindent\textbf{(a)} From Theorem
\ref{2.pp.1}(\textbf{e}),  by setting
$\overline{Z}=\overline{\zeta}$ and making use of the symmetry of
$T$ and the identity
$g(P(\overline{X},\overline{Y})\overline{Z},\overline{Z})=0$
\cite{r96}, we obtain
\begin{eqnarray*}
% \nonumber to remove numbering (before each equation)
0=g(P(\overline{X},\overline{Y})\overline{\zeta},\overline{\zeta})
&=&-\psi(x)T(\overline{X},\overline{Y},\overline{\zeta})\\
&=&-\psi(x)g(T(\overline{X},\overline{\zeta}),\overline{Y}).
\end{eqnarray*}
From which, since $\psi(x)\neq0$, the
result follows.

\vspace{5pt}
 \noindent\textbf{(b)}  We have \cite{r96}
$$\widehat{P}(\overline{X},\overline{Y})=(\nabla_{\beta
\overline{\eta}}T)(\overline{X},\overline{Y}).$$ From which, setting $\overline{X}=\overline{\zeta}$, it follows that
\begin{eqnarray*}
% \nonumber to remove numbering (before each equation)
 \widehat{P}(\overline{\zeta},\overline{Y})&=&(\nabla_{\beta
\overline{\eta}}T)(\overline{\zeta},\overline{Y})\\
&=&\nabla_{\beta \overline{\eta}}T(\overline{\zeta},\overline{Y})
-T(\nabla_{\beta \overline{\eta}}\overline{\zeta},\overline{Y})
-T(\overline{\zeta},\nabla_{\beta \overline{\eta}}\overline{Y})\\
&=&\nabla_{\beta \overline{\eta}}T(\overline{\zeta},\overline{Y})
-\alpha(\overline{\eta})T(\overline{\zeta},\overline{Y})-\psi(x)T(\overline{\eta},\overline{Y})
-T(\overline{\zeta},\nabla_{\beta \overline{\eta}}\overline{Y}).
\end{eqnarray*}
Hence, making use of \textbf{({a})}, the symmetry of $\widehat{P}$
and the fact that $T(\overline{X},\overline{\eta})=0$, the result
follows.

\vspace{5pt}
 \noindent\textbf{(c)} Clear.

\vspace{5pt} \noindent\textbf{(d)}  We have from \cite{r96},
\begin{equation}\label{01}
\left.
    \begin{array}{rcl}
   P(\overline{X},\overline{Y},\overline{Z},\overline{W})&=&
   g((\nabla_{\beta\overline{Z}}T)(\overline{Y},\overline{X}),
\overline{W})
   -g((\nabla_{\beta
\overline{W}}T)(\overline{Y},\overline{X}), \overline{Z})
   \\
    &&-g(T(\overline{X},\overline{W}),\widehat{P}(\overline{Z},\overline{Y}))
   +g(T(\overline{X},\overline{Z}),\widehat{P}(\overline{W},\overline{Y})).
\end{array}
  \right.
\end{equation}
From which, by setting $\overline{Y}=\overline{\zeta}$ in (\ref{01}),  using
\textbf{(b)}  and the symmetry of $T$,
 we conclude that $P(\overline{X},\overline{\zeta})\overline{Z}=$.
 Similarly,   setting  $\overline{X}=\overline{\zeta}$ in (\ref{01}), using \textbf{(a)} and the symmetry of $T$,
   we get $P(\overline{\zeta},\overline{Y})\overline{Z}=0$.

\vspace{5pt} \noindent\textbf{(e)} The proof follows from
Theorem \ref{2.pp.1}(\textbf{j}) by setting
$\overline{Z}=\overline{\zeta}$ , taking into account the fact that
$g(R(\overline{X},\overline{Y})\overline{Z},\overline{Z})=0$
\cite{r96}.

 \vspace{5pt} \noindent\textbf{(f)} We have
$$\mathfrak{A}_{\overline{X},\overline{Y}}\set{(\mu(\overline{Y})-\psi(x)\alpha(\overline{Y}))\omega(\overline{X})}=0.$$ Hence, there
exists a scalar function $\lambda$ such that
$$\mu(\overline{X})-\psi(x)\alpha(\overline{X})=\lambda\omega(\overline{X}).$$
Consequently, using \textbf{(a)} and the symmetry of $T$, we get
$$\mu(T(\overline{X},\overline{Y}))-\psi(x)\alpha(T(\overline{X},\overline{Y}))=
\lambda\omega(T(\overline{X},\overline{Y}))=g(T(\overline{X},\overline{Y}),\overline{\zeta})=0.$$
This completes the proof.
\end{proof}

\begin{thm}\label{th.ind}  A concircular $\pi$-vector field
$\overline{\zeta}$ and its associated $\pi$-form $\omega$ are
independent of the directional argument $y$.
\end{thm}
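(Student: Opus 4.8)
The plan is to reduce the statement to the vanishing of two Berwald $v$-covariant derivatives, $D^{\circ}_{\gamma \overline{X}}\overline{\zeta}$ and $D^{\circ}_{\gamma \overline{X}}\omega$, and then invoke the characterization of $y$-independence furnished by Lemma \ref{def.ind}. The whole point is that \textquotedblleft being independent of the directional argument $y$\textquotedblright\ is equivalent to the vanishing of the Berwald vertical derivative, so once I translate the Cartan-connection hypotheses on $\overline{\zeta}$ into the Berwald setting, the conclusion is immediate.

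First I would handle the $\pi$-vector field $\overline{\zeta}$. Using Theorem \ref{2.th.1}(\textbf{a}), which expresses the Berwald $v$-derivative through the Cartan one, I write
$$D^{\circ}_{\gamma \overline{X}}\overline{\zeta}=\nabla _{\gamma \overline{X}}\overline{\zeta}-T(\overline{X},\overline{\zeta}).$$
The first term on the right vanishes by the defining relation (\ref{12.eq.1}) of a concircular $\pi$-vector field, and the second term vanishes by Proposition \ref{2.pp.2}(\textbf{a}). Hence $D^{\circ}_{\gamma \overline{X}}\overline{\zeta}=0$ for every $\overline{X}\in\cp$, and Lemma \ref{def.ind}(\textbf{a}) yields that $\overline{\zeta}$ is independent of $y$.

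Next I would treat the associated $\pi$-form $\omega:=i_{\overline{\zeta}}\,g$. I first establish the $1$-form analogue of Theorem \ref{2.th.1}(\textbf{a}): applying both $D^{\circ}_{\gamma \overline{X}}$ and $\nabla_{\gamma \overline{X}}$ to the scalar $\omega(\overline{Y})$ via the Leibniz rule and subtracting gives
$$(D^{\circ}_{\gamma \overline{X}}\omega)(\overline{Y})=(\nabla_{\gamma \overline{X}}\omega)(\overline{Y})+\omega(T(\overline{X},\overline{Y})).$$
The first term vanishes by Lemma \ref{2.le.1}(\textbf{b}). For the second, I write $\omega(T(\overline{X},\overline{Y}))=g(T(\overline{X},\overline{Y}),\overline{\zeta})$, use the torsion-symmetry property (\textbf{c}) of the Cartan connection from Theorem \ref{th.1} to rewrite this as $g(T(\overline{X},\overline{\zeta}),\overline{Y})$, and conclude that it vanishes by Proposition \ref{2.pp.2}(\textbf{a}). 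Thus $D^{\circ}_{\gamma \overline{X}}\omega=0$ for all $\overline{X}$, and Lemma \ref{def.ind}(\textbf{b}) gives that $\omega$ is independent of $y$.

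The computation is short and essentially formal; I do not anticipate a genuine obstacle. If anything needs care, it is the derivation of the $1$-form transition formula $(D^{\circ}_{\gamma \overline{X}}\omega)(\overline{Y})=(\nabla_{\gamma \overline{X}}\omega)(\overline{Y})+\omega(T(\overline{X},\overline{Y}))$ with the correct sign, and the observation that the crucial mixed-torsion term $T(\overline{X},\overline{\zeta})$ vanishes --- it is exactly this vanishing (Proposition \ref{2.pp.2}(\textbf{a})), together with the metricity used in Lemma \ref{2.le.1}, that makes both Berwald vertical derivatives collapse to zero.
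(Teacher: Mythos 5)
Your proposal is correct and follows essentially the same route as the paper's own proof: both reduce the statement to the vanishing of the Berwald vertical derivatives via Theorem \ref{2.th.1}(a), the defining relation (\ref{12.eq.1}), Proposition \ref{2.pp.2}(a), Lemma \ref{2.le.1}(b), the symmetry property of $T$, and the characterization in Lemma \ref{def.ind}. Your explicit Leibniz-rule derivation of the transition formula $(D^{\circ}_{\gamma \overline{X}}\omega)(\overline{Y})=(\nabla_{\gamma \overline{X}}\omega)(\overline{Y})+\omega(T(\overline{X},\overline{Y}))$ merely spells out the step the paper states directly, so there is no substantive difference.
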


\begin{proof}
 By Theorem \ref{2.th.1}(a), we have\vspace{-0.2cm}
$$D^{\circ}_{\gamma \overline{X}}\overline{Y}=\nabla_{\gamma \overline{X}}\overline{Y}-T(\overline{X}, \overline{Y})\vspace{-0.2cm}.$$
 From which,  by setting $\overline{Y}=\overline{\zeta}$ and taking into account (\ref{12.eq.1}), Proposition \ref{2.pp.2}(a)
and Lemma \ref{def.ind}, we conclude that $D^{\circ}_{\gamma \overline{X}}\overline{\zeta}=0 $ and $\overline{\zeta}$ is thus
independent of the directional argument $y$.
\par
On the other hand, we have from the above relation\vspace{-0.2cm}
  $$(D^{\circ}_{\gamma \overline{X}}\omega)(\overline{Y})=(\nabla_{\gamma \overline{X}}\omega)(\overline{Y})+
  T(\overline{X}, \overline{Y},\overline{\zeta}).\vspace{-0.2cm}$$ This, together with Lemma
\ref{2.le.1}(b),  Proposition \ref{2.pp.2}(a) and the symmetry of
$T$, imply that $\omega$ is also independent of the directional
argument $y$.
\end{proof}

In view of Theorem \ref{2.th.1} and Proposition  \ref{2.pp.2}, we
have
\begin{thm}\label{thm.Bw} A  $\pi$-vector field $\overline{\zeta}$  on $(M,L)$ is concircular with respect to
Cartan connection if, and only if, it is  concircular  with respect
to Berwald connection.
\end{thm}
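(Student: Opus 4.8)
The plan is to exploit Theorem \ref{2.th.1}, which writes the Berwald connection $D^{\circ}$ as the Cartan connection $\nabla$ corrected by torsion terms: vertically $D^{\circ}_{\gamma\overline{X}}\overline{Y}=\nabla_{\gamma\overline{X}}\overline{Y}-T(\overline{X},\overline{Y})$, and horizontally $D^{\circ}_{\beta\overline{X}}\overline{Y}=\nabla_{\beta\overline{X}}\overline{Y}+\widehat{P}(\overline{X},\overline{Y})$. Since Theorem \ref{2.th.1} expresses $D^{\circ}$ through the very same horizontal and vertical maps $\beta,\gamma$ of the Cartan connection, the data $\beta,\gamma$ and hence the functions $\alpha(\overline{X})=d\sigma(\beta\overline{X})$ and $\psi(x)$ of Definition \ref{2.def.1} are common to both formulations. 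Consequently the two candidate pairs of defining equations — relations (\ref{11.eq.1})--(\ref{12.eq.1}) for $\nabla$ and their verbatim analogues for $D^{\circ}$ — differ only by the correction terms $T(\overline{X},\overline{\zeta})$ and $\widehat{P}(\overline{X},\overline{\zeta})$. The theorem therefore reduces to showing that both correction terms vanish in each regime.

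For the forward implication I would assume $\overline{\zeta}$ concircular with respect to $\nabla$ and simply invoke Proposition \ref{2.pp.2}(a),(b), which give $T(\overline{X},\overline{\zeta})=0$ and $\widehat{P}(\overline{X},\overline{\zeta})=0$. Substituting these into Theorem \ref{2.th.1}(a),(b) and using (\ref{11.eq.1})--(\ref{12.eq.1}) yields $D^{\circ}_{\gamma\overline{X}}\overline{\zeta}=0$ and $D^{\circ}_{\beta\overline{X}}\overline{\zeta}=\alpha(\overline{X})\overline{\zeta}+\psi(x)\overline{X}$, i.e. $\overline{\zeta}$ is concircular with respect to $D^{\circ}$ with the same $\alpha,\psi$. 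This direction is immediate.

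For the converse I would assume $\overline{\zeta}$ concircular with respect to $D^{\circ}$, so that $D^{\circ}_{\gamma\overline{X}}\overline{\zeta}=0$; by Lemma \ref{def.ind}(a) this already forces $\overline{\zeta}$ to be independent of the directional argument $y$. Inverting the relations of Theorem \ref{2.th.1} gives $\nabla_{\gamma\overline{X}}\overline{\zeta}=T(\overline{X},\overline{\zeta})$ and $\nabla_{\beta\overline{X}}\overline{\zeta}=\alpha(\overline{X})\overline{\zeta}+\psi(x)\overline{X}-\widehat{P}(\overline{X},\overline{\zeta})$, so it remains only to prove that $T(\overline{X},\overline{\zeta})=0$ and $\widehat{P}(\overline{X},\overline{\zeta})=0$ under this hypothesis too; once that is secured, the Cartan equations (\ref{11.eq.1})--(\ref{12.eq.1}) follow at once.

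The main obstacle is precisely this converse vanishing, since Proposition \ref{2.pp.2} presupposes concircularity with respect to $\nabla$ and so cannot be quoted here without circularity; moreover the inverted horizontal equation carries the extra term $\widehat{P}(\overline{X},\overline{\zeta})$, so the computation is not merely a verbatim repetition. I would re-derive the two identities intrinsically by retracing the argument behind Proposition \ref{2.pp.2}: expand the Cartan $hv$-curvature $P(\overline{X},\overline{Y})\overline{\zeta}=\textbf{K}(\beta\overline{X},\gamma\overline{Y})\overline{\zeta}$ using the two inverted relations above, then compare the outcome with the general formula (\ref{01}) for $P$ and with the identity $\widehat{P}(\overline{X},\overline{Y})=(\nabla_{\beta\overline{\eta}}T)(\overline{X},\overline{Y})$ of \cite{r96}, specializing one slot to $\overline{\zeta}$ and another to $\overline{\eta}$. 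Exploiting $T(\overline{X},\overline{\eta})=0$ together with the symmetry of $T$ and the skew-symmetry of $P$, I expect these relations to close into a linear system in the two unknowns $T(\overline{X},\overline{\zeta})$ and $\widehat{P}(\overline{X},\overline{\zeta})$ whose only solution is zero, exactly as in Proposition \ref{2.pp.2}(a),(b) but now run in reverse; this would complete the converse and hence the equivalence.
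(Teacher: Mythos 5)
Your forward implication is correct and is, in fact, the entirety of the paper's own proof: the paper justifies the theorem by citing exactly Theorem \ref{2.th.1} together with Proposition \ref{2.pp.2}, i.e.\ by the observation that the two correction terms $T(\overline{X},\overline{\zeta})$ and $\widehat{P}(\overline{X},\overline{\zeta})$ vanish, so that $\nabla$ and $D^{\circ}$ agree on $\overline{\zeta}$. You are also right that this argument is asymmetric: Proposition \ref{2.pp.2} presupposes Cartan-concircularity, so it cannot be quoted for the converse without circularity (a subtlety the paper's one-line justification passes over). The genuine gap is that your converse is not a proof but a hope: the ``linear system in the two unknowns $T(\overline{X},\overline{\zeta})$ and $\widehat{P}(\overline{X},\overline{\zeta})$ whose only solution is zero'' never materializes, because every identity you propose to combine produces first-order \emph{covariant derivatives} of these unknowns rather than algebraic relations in them.

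Concretely, write $t(\overline{X}):=T(\overline{X},\overline{\zeta})$. Under the Berwald hypothesis the inverted relations give $\nabla_{\gamma\overline{Y}}\overline{\zeta}=t(\overline{Y})$, so the expansion of $P(\overline{X},\overline{Y})\overline{\zeta}$ begins with $\nabla_{\beta\overline{X}}\bigl(t(\overline{Y})\bigr)$, i.e.\ with $(\nabla_{\beta\overline{X}}T)(\overline{Y},\overline{\zeta})$; likewise, specializing $\widehat{P}=\nabla_{\beta\overline{\eta}}T$ at $\overline{\zeta}$ and using $T(\overline{X},\overline{\eta})=0$ yields $\widehat{P}(\overline{X},\overline{\zeta})=(\nabla_{\beta\overline{\eta}}t)(\overline{X})-\alpha(\overline{\eta})\,t(\overline{X})+T\bigl(\overline{X},\widehat{P}(\overline{\eta},\overline{\zeta})\bigr)$, which couples one unknown to the \emph{derivative} of the other. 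The derivative-free identity $P(\overline{X},\overline{Y})\overline{\zeta}=\psi(x)T(\overline{X},\overline{Y})$, which is what lets Proposition \ref{2.pp.2}(a) exploit $g(P(\overline{X},\overline{Y})\overline{Z},\overline{Z})=0$ to kill $T(\cdot,\overline{\zeta})$ algebraically, is available only after \emph{both} Cartan equations (\ref{11.eq.1})--(\ref{12.eq.1}) hold --- which is exactly what you are trying to establish. Chasing your scheme through to the end produces only transport-type equations (for instance $\nabla_{\beta\overline{\eta}}t=\alpha(\overline{\eta})\,t$ once (\ref{11.eq.1}) is secured), and such equations do not force $t=0$ pointwise. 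What can be salvaged honestly is one half of the converse: since $\overline{\zeta}$, $\alpha(\overline{X})$ and $\psi$ are independent of $y$ (Lemma \ref{def.ind} and Remark \ref{rem.1}), vertical differentiation of the Berwald horizontal equation annihilates the Berwald hv-curvature on $\overline{\zeta}$, and the classical contraction identity expressing $\widehat{P}$ (the Landsberg tensor) as the $\overline{\eta}$-contraction of that curvature then gives $\widehat{P}(\cdot,\overline{\zeta})=0$, hence (\ref{11.eq.1}); but the remaining vanishing $T(\cdot,\overline{\zeta})=0$, equivalently (\ref{12.eq.1}), still requires an argument that neither your proposal nor the paper's cited ingredients supply.
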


\begin{rem} \em{As a consequence of the above results, we retrieve a result of \cite{con.} concerning concurrent
$\pi$-vector fields: A concurrent $\pi$-vector field
$\overline{\zeta}$ and its associated $\pi$-form $\omega$ are
independent of the directional argument $y$. Moreover, a
$\pi$-vector field $\overline{\zeta}$  on $(M,L)$ is concurrent with
respect to Cartan connection if, and only if, it is concurrent with
respect to Berwald connection.}
\end{rem}

%%%%%%%%%%%%%%%%%%%%%%%%%%%%%%%%%%%%%%%%% 3. Special Finsler spaces admitting concircular pi-vector fields %%%5%%%%%%%%%%%%%%%%%%%%%%%%%%%%%%%%%

\Section{Special Finsler spaces  admitting  concircular $\pi$-vector
fields}

Special Finsler manifolds arise by imposing extra conditions on the curvature and torsion tensors available in the space. Due to the abundance of such geometric objects in the context of Finsler geometry, special Finsler spaces are quite numerous. The study of these spaces constitutes a substantial part of research in Finsler geometry. A complete and systematic study of special Finsler spaces, from a global point of view, has been accomplished in \cite{r86}.

In this section, we investigate the effect of the existence of a
concircular $\pi$-vector field on some important special Finsler
spaces. The intrinsic definitions of the special Finsler spaces
treated here are quoted from \cite{r86}.
\vspace{7pt}
\par
 For later use, we need the
following  lemma.\vspace{-0.2cm}
\begin{lem}\label{.le.1}Let $(M,L)$ be a Finsler manifold admitting a concircular
$\pi$-vector field $\overline{\zeta}$. Then, we
have\,\emph{:}\vspace{-0.2cm}
\begin{description}
\item[(a)] The concircular $\pi$-vector field $\overline{\zeta}$ is everywhere non-zero.

    \item[(b)] The scalar function  $B:=g(\overline{\zeta}, \overline{\eta})$ is everywhere non-zero.
    \item[(c)]The $\pi$-vector field
    $\overline{m}:=\overline{\zeta}-\frac{B}{L^2}\,\overline{\eta}$ is everywhere non-zero and is orthogonal to $\overline{\eta}$.
    \item[(d)]The $\pi$-vector fields $\overline{m}$ and $\overline{\zeta}$ satisfy
    $g(\overline{m},\overline{\zeta})=g(\overline{m},\overline{m})\neq0$.
    \item[(e)] The scalar function  $\hbar(\overline{\zeta}, \overline{\zeta})$ is everywhere non-zero.
\end{description}
\end{lem}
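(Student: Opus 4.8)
The plan is to treat (a) and (b) as non-triviality statements, proved by contradiction from the defining equations (\ref{11.eq.1})--(\ref{12.eq.1}) together with $\psi\neq0$, and to treat (c)--(e) as essentially algebraic identities that all rest on a single geometric fact: $\overline{\zeta}$ is not a pointwise multiple of the canonical field $\overline{\eta}$. Throughout I would use $\nabla g=0$, the relation $\nabla_{\gamma\overline{X}}\overline{\eta}=K(\gamma\overline{X})=\overline{X}$ (which holds since $K\circ\gamma=id$), and $g(\overline{\eta},\overline{\eta})=L^2$.

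First I would prove (a). If $\overline{\zeta}$ vanished, then (\ref{11.eq.1}) would read $\psi(x)\overline{X}=0$ for every $\overline{X}\in\cp$, forcing $\psi=0$ and contradicting the hypothesis $\psi\neq0$; hence $\overline{\zeta}$ is non-zero. For (b) I would differentiate $B=g(\overline{\zeta},\overline{\eta})$ vertically, using $\nabla g=0$, (\ref{12.eq.1}), and $\nabla_{\gamma\overline{X}}\overline{\eta}=\overline{X}$:
\[
\nabla_{\gamma\overline{X}}B=g(\nabla_{\gamma\overline{X}}\overline{\zeta},\overline{\eta})+g(\overline{\zeta},\nabla_{\gamma\overline{X}}\overline{\eta})=g(\overline{\zeta},\overline{X})=\omega(\overline{X}).
\]
If $B$ were zero then $\omega$ would vanish, whence $\overline{\zeta}=0$, contradicting (a); thus $B$ is non-zero.

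The heart of the argument is the claim that powers (c)--(e): $\overline{\zeta}\neq f\,\overline{\eta}$ for any scalar $f$. Indeed, applying $\nabla_{\gamma\overline{X}}$ to the supposed identity $\overline{\zeta}=f\overline{\eta}$ and invoking (\ref{12.eq.1}) and $\nabla_{\gamma\overline{X}}\overline{\eta}=\overline{X}$ gives $0=(\nabla_{\gamma\overline{X}}f)\overline{\eta}+f\overline{X}$, that is $f\overline{X}=-(\nabla_{\gamma\overline{X}}f)\overline{\eta}$ for all $\overline{X}$; since $\overline{X}$ is arbitrary while $\overline{\eta}$ spans a single direction, this forces $f\equiv0$ and hence $\overline{\zeta}=0$, contradicting (a). Geometrically this is just the statement that $\overline{\zeta}$ is independent of the directional argument (Theorem \ref{th.ind}) whereas $\overline{\eta}$ is not. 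Granting this, (c)--(e) are direct computations. From $g(\overline{\eta},\overline{\eta})=L^2$ one gets $g(\overline{m},\overline{\eta})=B-\tfrac{B}{L^2}L^2=0$, so $\overline{m}\perp\overline{\eta}$, while $\overline{m}=\overline{\zeta}-\tfrac{B}{L^2}\overline{\eta}\neq0$ is exactly the claim just proved; this is (c). Because $g(\overline{m},\overline{\eta})=0$ we obtain $g(\overline{m},\overline{m})=g(\overline{m},\overline{\zeta})$, and this common value is non-zero by positive-definiteness of $g$ together with $\overline{m}\neq0$, giving (d). Finally $\ell(\overline{\zeta})=L^{-1}B$ yields
\[
\hbar(\overline{\zeta},\overline{\zeta})=g(\overline{\zeta},\overline{\zeta})-\ell(\overline{\zeta})^2=g(\overline{\zeta},\overline{\zeta})-\tfrac{B^2}{L^2}=g(\overline{m},\overline{m}),
\]
which is non-zero by (d), establishing (e).

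I expect the main obstacle to be the non-proportionality claim rather than the identities: it is the one place where the two defining equations interact nontrivially, exploiting that the vertical covariant derivative of $\overline{\zeta}$ vanishes while that of $\overline{\eta}$ is the identity map. The point deserving careful phrasing is the precise sense of ``everywhere non-zero'': the arguments above show that $\overline{\zeta}$, $B$ and $\overline{m}$ are non-vanishing as $\pi$-fields, which is exactly what the later normalisations by $B$ and by $\hbar(\overline{\zeta},\overline{\zeta})$, and the construction of $\overline{m}$, require.
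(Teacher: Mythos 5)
Your proposal is correct, and parts (a), (b), (d), (e) coincide with the paper's own argument (which states them more tersely: (a) ``follows by Definition \ref{2.def.1}'', (d) and (e) ``follow from (c)''), including the shared tacit appeal to positive-definiteness of $g$ when concluding $g(\overline{m},\overline{m})\neq 0$. The one place you take a genuinely different route is the key part \textbf{(c)}. The paper assumes $\overline{m}=0$, i.e.\ $L^{2}\overline{\zeta}-B\,\overline{\eta}=0$, differentiates this vertically, contracts with $g$, and lands on $B\,\hbar(\overline{X},\overline{Y})=0$ for all $\overline{X},\overline{Y}$, which contradicts $B\neq0$ from part (b) because the angular metric $\hbar$ is not identically zero. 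You instead prove the stronger statement that $\overline{\zeta}$ cannot equal $f\,\overline{\eta}$ for \emph{any} scalar $f$: from $(\nabla_{\gamma\overline{X}}f)\overline{\eta}+f\overline{X}=0$ you force $f\equiv0$ by choosing $\overline{X}$ linearly independent of $\overline{\eta}$, contradicting (a). Both arguments run on the same engine ($\nabla_{\gamma\overline{X}}\overline{\zeta}=0$ while $\nabla_{\gamma\overline{X}}\overline{\eta}=\overline{X}$), and both implicitly require fibre dimension at least $2$ --- yours to pick $\overline{X}$ independent of $\overline{\eta}$, the paper's because $\hbar\equiv0$ is not absurd when $n=1$. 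What your variant buys: it needs only (a) rather than (b), isolates the geometric content (non-proportionality of $\overline{\zeta}$ to $\overline{\eta}$, which indeed underlies Theorem \ref{th.ind}) more transparently, and is shorter; the paper's version keeps the computation entirely inside the standard tensor calculus of $g$ and $\hbar$, which is more in the spirit of the rest of its proofs.
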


\begin{proof}~\par

\vspace{5pt}
 \noindent\textbf{(a)} Follows by Definition \ref{2.def.1}.

\vspace{5pt}
 \noindent\textbf{(b)} Suppose that  $B:=g(\overline{\zeta},\overline{\eta})=0$,
then
\begin{eqnarray*}
% \nonumber to remove numbering (before each equation)
  0&=& (\nabla_{\gamma
  \overline{X}}g)(\overline{\zeta},\overline{\eta})\\
  &=&
  \nabla_{\gamma \overline{X}}g(\overline{\zeta},\overline{\eta})-g(\overline{\zeta}, \overline{X})\\
  &=&
  -g(\overline{\zeta}, \overline{X}),\ \ \forall \  \overline{X}\in
  \cp.
 \end{eqnarray*}
Hence, as $g$ is nondegenerate, $\overline{\zeta}$
vanishes,  which contradicts \textbf{(a)}.  Consequently, $B\neq0$.

\vspace{5pt}
 \noindent\textbf{(c)} If  $\overline{m}=0$, then $L^{2}\overline{\zeta}-B\overline{\eta}=0$. Differentiating covariantly with respect
to $\gamma \overline{X}$, we get
\begin{equation}\label{.1.eq.1}
2g(\overline{X}, \overline{\eta})\overline{\zeta}-B
\overline{X}-g(\overline{X},\overline{\zeta})\overline{\eta}=0.
\vspace{-0.2cm}
\end{equation}
From which,
\begin{equation}\label{eq..}
    g(\overline{X},\overline{\zeta})=\displaystyle{\frac{B}{L^2}}g(\overline{X},
\overline{\eta}).
\end{equation}
 By  (\ref{.1.eq.1}), using (\ref{eq..}), we obtain
\begin{eqnarray*}
% \nonumber to remove numbering (before each equation)
  0&=&2g(\overline{X}, \overline{\eta})g(\overline{Y},\overline{\zeta})-B g(\overline{X},\overline{Y})-g(\overline{X},\overline{\zeta})g(\overline{Y},\overline{\eta}) \\
   &=& 2\frac{B}{L^2} g(\overline{Y}, \overline{\eta})g(\overline{X},\overline{\eta})- B g(\overline{X},\overline{Y})-\frac{B}{L^2} g(\overline{X}, \overline{\eta})g(\overline{Y},\overline{\eta})\\
    &=&- B\{ g(\overline{X},\overline{Y})-\frac{1}{L^2}g(\overline{Y}, \overline{\eta})g(\overline{X},\overline{\eta})\}\\
    &=&- B\hbar(\overline{X},\overline{Y}).
\end{eqnarray*}
From which, since $B\neq0$, we are led to a contradiction:
$\hbar=0$. Consequently, $\overline{m}\neq0$.
\par
On the other hand, the orthogonality of the two
$\pi$-vector fields $\overline{m}$ and $\overline{\eta}$ follows
from the identities $g(\overline{\eta},\overline{\eta})=L^2$ and
$g(\overline{\eta},\overline{\zeta})=B$.

\vspace{5pt}
 \noindent\textbf{(d)} Follows from \textbf{(c)}.

 \vspace{5pt}
\noindent\textbf{(e)} Follows from \textbf{(d)}, \textbf{(c)} and
the fact that $\hbar(X,\eta)=\hbar(\eta,X)=0$.
\end{proof}

\begin{defn}\label{7.def.2} A Finsler manifold $(M,L)$ is said to be\,{\em:}
\begin{description}
  \item[(a)]  Riemannian  if the metric tensor $g(x,y)$ is independent
  of $y$ or, equivalently, if
  $T=0.$

  \item[(b)]  Berwald  if the torsion tensor $T$ is horizontally
  parallel\,{\em:}
  $\stackrel{h}\nabla T =0.$

\item[(c)] Landsberg  if the $v(hv)$-torsion tensor $\widehat{P}=0$
or, equivalently, if \  $\nabla_{\beta \overline{\eta}}T =0$.
 \end{description}
\end{defn}

\begin{thm}\label{.thm.1} A Landsberg manifold  admitting a concircular $\pi$-vector field $\overline{\zeta} $
is\linebreak Riemannian.
\end{thm}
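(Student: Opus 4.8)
The plan is to show that a Landsberg manifold admitting a concircular $\pi$-vector field must have vanishing Cartan tensor, hence $T=0$, which by Definition \ref{7.def.2}(a) means the manifold is Riemannian. The strategy is to exploit the hv-curvature identity from Theorem \ref{2.pp.1}(e), namely $P(\overline{X},\overline{Y})\overline{\zeta}=\psi(x)T(\overline{X},\overline{Y})$, and to kill the left-hand side using the Landsberg condition.

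First I would recall that for a Landsberg manifold the $v(hv)$-torsion $\widehat{P}$ vanishes, equivalently $\nabla_{\beta\overline{\eta}}T=0$ by Definition \ref{7.def.2}(c). The key is to obtain enough control over $P(\overline{X},\overline{Y})\overline{\zeta}$ to conclude it is zero. A natural route is through the identity (\ref{01}) for $P$ in terms of $T$ and $\widehat{P}$: on a Landsberg space the two terms involving $\widehat{P}$ drop out, leaving $P$ expressed purely via $h$-covariant derivatives of $T$. I would then contract this with $\overline{\zeta}$ in the appropriate slot and compare with the formula $P(\overline{X},\overline{Y})\overline{\zeta}=\psi(x)T(\overline{X},\overline{Y})$. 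Since $\psi(x)\neq 0$ throughout, establishing that the relevant combination of covariant derivatives of $T$ vanishes will force $T(\overline{X},\overline{Y})=0$.

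Concretely, the cleanest argument is probably to use the characterization of Landsberg as $\nabla_{\beta\overline{\eta}}T=0$ together with Theorem \ref{2.pp.1}(h), which gives $(\nabla_{\beta\overline{\zeta}}P)(\overline{X},\overline{Y},\overline{\zeta})$ in terms of $T$ and $\nabla_{\beta\overline{\zeta}}T$. Since $\overline{\zeta}$ is independent of $y$ (Theorem \ref{th.ind}) and, on a Landsberg space, $\widehat P = 0$ forces the $y$-derivatives of $T$ to be constrained, I expect that combining the Landsberg hypothesis with the concircularity relations collapses the right-hand side of (e) so that $\psi(x)T(\overline{X},\overline{Y})=0$. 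Because $\psi(x)$ is a nonzero scalar, this yields $T=0$ directly, and Definition \ref{7.def.2}(a) then identifies the space as Riemannian.

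The main obstacle will be bookkeeping: assembling the correct contraction of (\ref{01}) with $\overline{\zeta}$, applying the Landsberg condition to cancel the $\widehat{P}$-terms, and verifying that the surviving covariant-derivative terms of $T$ indeed vanish rather than merely simplify. One must be careful about which argument slot $\overline{\zeta}$ occupies, since Proposition \ref{2.pp.2}(a) gives $T(\overline{X},\overline{\zeta})=0$ and part (d) gives $P(\overline{X},\overline{\zeta})\overline{Y}=0$, and these zero-relations need to be deployed in the right places to avoid circular reasoning. Once the algebra confirms that the $T$-derivative terms cancel, the conclusion $T=0$ and hence the Riemannian property follow immediately.
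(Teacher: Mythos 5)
Your proof has the same skeleton as the paper's --- both funnel through Theorem \ref{2.pp.1}(e) and the nonvanishing of $\psi(x)$ to get $T=0$, hence Riemannian by Definition \ref{7.def.2}(a) --- but the step where you kill the hv-curvature term is where your argument has a genuine gap. The paper disposes of that term by citing a result of \cite{r96}: for the Cartan connection, $\widehat{P}=0$ (Landsberg) forces the \emph{whole} hv-curvature $P$ to vanish; then (e) gives $0=P(\overline{X},\overline{Y},\overline{Z},\overline{\zeta})=-\psi(x)T(\overline{X},\overline{Y},\overline{Z})$ and the theorem follows. Your proposed substitute --- contract (\ref{01}) with $\overline{W}=\overline{\zeta}$, drop the $\widehat{P}$-terms, and compare with (e) --- does not deliver this. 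Indeed, with $\widehat{P}=0$, identity (\ref{01}) gives
\begin{equation*}
P(\overline{X},\overline{Y},\overline{Z},\overline{\zeta})
=g\bigl((\nabla_{\beta\overline{Z}}T)(\overline{Y},\overline{X}),\overline{\zeta}\bigr)
-g\bigl((\nabla_{\beta\overline{\zeta}}T)(\overline{Y},\overline{X}),\overline{Z}\bigr),
\end{equation*}
and since $g(T(\cdot,\cdot),\overline{\zeta})=0$ by Proposition \ref{2.pp.2}(a) and the symmetry of $T$, while $\nabla_{\beta\overline{Z}}\overline{\zeta}=\alpha(\overline{Z})\overline{\zeta}+\psi(x)\overline{Z}$, the first term evaluates exactly to $-\psi(x)T(\overline{X},\overline{Y},\overline{Z})$. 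Comparing with (e) therefore yields only $(\nabla_{\beta\overline{\zeta}}T)(\overline{X},\overline{Y},\overline{Z})=0$: the surviving derivative term does not cancel, no contradiction with $T\neq 0$ arises, and nothing forces $T=0$. This is precisely the circularity you were worried about, and it is not resolved by bookkeeping.

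The missing ingredient is the nontrivial implication $\widehat{P}=0\Rightarrow P=0$ (equivalently: on a Landsberg manifold, $(\nabla_{\beta\overline{Z}}T)(\overline{X},\overline{Y},\overline{W})$ is symmetric under interchange of $\overline{Z}$ and $\overline{W}$). This is a separate identity for the Cartan connection, established in \cite{r96} and classical in local form, where Landsberg spaces are characterized by $P_{hijk}=0$; it cannot be extracted from (\ref{01}) by the contraction you describe. Your fallback via Theorem \ref{2.pp.1}(h) suffers from the same defect: to control $(\nabla_{\beta\overline{\zeta}}P)(\overline{X},\overline{Y},\overline{\zeta})$ you again need $P$ itself, not merely $\widehat{P}$, to vanish; and even granting that, (h) yields $T=0$ only if $\mu(\overline{\zeta})-\psi(x)\alpha(\overline{\zeta})-\psi^{2}(x)\neq0$, an assumption not available in the hypotheses. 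Once you import the cited result $\widehat{P}=0\Rightarrow P=0$, your argument closes exactly as the paper's does.
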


\begin{proof} Suppose that
 $(M,L)$ is Landsberg, then  $\widehat{P}=0$. Consequently,
the hv-curvature $P$ vanishes \cite{r96}.
 Hence, by Theorem \ref{2.pp.1}(e),
$$0=P(\overline{X},\overline{Y},\overline{Z}, \overline{\zeta})
=-\psi(x)T(\overline{X},\overline{Y},\overline{Z}).$$ From which,
taking into account the fact that $\psi(x)$ is a non-zero function,
it follows that $T=0$. Hence the result follows.
\end{proof}

As a consequence of the above result, we get\vspace{-0.2cm}
\begin{cor}The existence of a concircular $\pi$-vector field $\overline{\zeta} $ implies that
the  three notions of being Landsberg, Berwald and Riemannian
coincide.
\end{cor}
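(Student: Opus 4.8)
The plan is to close a cycle of implications among the three notions, so that they all become equivalent in the presence of $\overline{\zeta}$. First I would record the two implications that hold on \emph{any} Finsler manifold, with no reference to the concircular field. The implication Riemannian $\Rightarrow$ Berwald is immediate from Definition \ref{7.def.2}: if the Cartan tensor vanishes identically, $T=0$ (Riemannian), then trivially $\stackrel{h}\nabla T=0$ (Berwald). For Berwald $\Rightarrow$ Landsberg, I would unwind the definition of horizontal parallelism: being Berwald means $\nabla_{\beta \overline{X}}T=0$ for every $\overline{X}\in\cp$, so specializing to $\overline{X}=\overline{\eta}$ gives $\nabla_{\beta \overline{\eta}}T=0$, which is precisely the Landsberg condition $\widehat{P}=0$ of Definition \ref{7.def.2}(c). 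Hence Riemannian $\Rightarrow$ Berwald $\Rightarrow$ Landsberg holds unconditionally.

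Next I would supply the arrow that reverses the chain. This is exactly the content of the already-established Theorem \ref{.thm.1}: a Landsberg manifold admitting a concircular $\pi$-vector field is Riemannian, i.e. Landsberg $\Rightarrow$ Riemannian. Concatenating this with the two unconditional implications yields the cycle
$$\text{Riemannian}\ \Rightarrow\ \text{Berwald}\ \Rightarrow\ \text{Landsberg}\ \Rightarrow\ \text{Riemannian},$$
whence the three properties are mutually equivalent, which is the assertion of the corollary.

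I do not expect a genuine obstacle here, since all the analytic work is already packaged inside Theorem \ref{.thm.1}, whose engine is the identity $P(\overline{X},\overline{Y},\overline{Z},\overline{\zeta})=-\psi(x)T(\overline{X},\overline{Y},\overline{Z})$ of Theorem \ref{2.pp.1}(e) together with $\psi(x)\neq 0$. The only point deserving a moment's care is the intrinsic reading of the Berwald $\Rightarrow$ Landsberg step: one must check that horizontal parallelism $\stackrel{h}\nabla T=0$ really does specialize at the fundamental $\pi$-vector field $\overline{\eta}$ to the vanishing of $\nabla_{\beta \overline{\eta}}T$, hence of $\widehat{P}$, so that no additional genericity or nondegeneracy hypothesis is smuggled in. Once this is noted, the corollary follows formally from Theorem \ref{.thm.1} and the definitions.
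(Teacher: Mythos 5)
Your proof is correct and matches the paper's intent: the paper states this corollary without proof as an immediate consequence of Theorem \ref{.thm.1}, relying implicitly on exactly the unconditional chain Riemannian $\Rightarrow$ Berwald $\Rightarrow$ Landsberg that you spell out, closed into a cycle by Theorem \ref{.thm.1}. Your explicit verification of the Berwald $\Rightarrow$ Landsberg step (specializing $\stackrel{h}\nabla T=0$ at $\overline{X}=\overline{\eta}$) is a welcome filling-in of what the paper leaves tacit.
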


\begin{defn}\label{7.def.4} A Finsler manifold $(M,L)$ is said to be\,\emph{:}\vspace{-0.1cm}
\begin{description}
\item[(a)] $C_{2}$-like   if $dim\, M \geq 2$ and  the Cartan tensor $T$ has the form
  \begin{equation*}
    T(\overline{X},\overline{Y},\overline{Z})= \frac{1}{C^{2}}C(\overline{X}) C(\overline{Y})
  C(\overline{Z}).
  \end{equation*}

\item[(b)] $C$-reducible   if $dim\, M \geq 3$ and the
  Cartan tensor $T$ has the form\footnote{$\mathfrak{S}_{\overline{X},\overline{Y}, \overline{Z}}$
denotes the cyclic sum over the arguments  $\overline{X}
  ,\overline{Y}$ and $ \overline{Z}$}
  \begin{equation}\label{.7.eq.6}
      T(\overline{X},\overline{Y},\overline{Z})=  \frac{1}{n+1} \mathfrak{S}_{\overline{X}
  ,\overline{Y}, \overline{Z}}\set{\hbar(\overline{X}
  ,\overline{Y})C(\overline{Z})}.
   \end{equation}

 \item[(c)]  semi-$C$-reducible if $dim\, M \geq 3$ and the Cartan tensor $T$ has the form
  \begin{equation}\label{.7.eq.5}
  \begin{split}
    T(\overline{X},\overline{Y},\overline{Z})= & \frac{\mu}{n+1} \mathfrak{S}_{\overline{X}
  ,\overline{Y}, \overline{Z}}\set{\hbar(\overline{X}
  ,\overline{Y})C(\overline{Z})}+ \frac{\tau}{C^{2}}C(\overline{X}) C(\overline{Y})
  C(\overline{Z}),\vspace{-0.2cm}
  \end{split}
  \end{equation}
 where \, $C^2:=C(\overline{C})\neq 0$,  $\mu$ and $\tau$ are scalar
functions  satisfying  $\mu +\tau=1$.

\item[(d)] quasi-$C$-reducible
if $dim\, M\geq 3$ and the Cartan tensor $T$ has the from
\vspace{-0.2cm}
  $$T(\overline{X},\overline{Y},\overline{Z})= \mathfrak{S}_{\overline{X}
  ,\overline{Y}, \overline{Z}}\set{A(\overline{X}
  ,\overline{Y})C(\overline{Z})},\vspace{-0.2cm}$$
where $A$ is a symmetric $\pi$-tensor field satisfying
$A(\overline{X},\overline{\eta})=0$.
 \end{description}
\end{defn}

\begin{thm}Let $(M,L)$ be a Finsler manifold {\em($\dim M \geq 3$)}
admitting a concircular  $\pi$-vector field $\overline{\zeta} $.\vspace{-0.1cm}
\begin{description}
 \item[(a)] If $(M,L)$ is quasi-$C$-reducible, then it is Riemannian, provided that $A(\overline{\zeta}, \overline{\zeta})\neq0$.

 \item[(b)] If $(M,L)$ is $C$-reducible, then it is Riemannian.

 \item[(c)] If $(M,L)$ is semi-$C$-reducible, then it is $C_{2}$-like.
\end{description}
\end{thm}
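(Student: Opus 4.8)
The plan is to exploit two facts already available. Proposition \ref{2.pp.2}(a) gives $T(\overline{X},\overline{\zeta})=T(\overline{\zeta},\overline{X})=0$ for every $\overline{X}$; hence $T(\overline{X},\overline{\zeta},\overline{\zeta})=g(T(\overline{X},\overline{\zeta}),\overline{\zeta})=0$, and since $\overline{Y}\mapsto T(\overline{\zeta},\overline{Y})$ is the zero map its trace $C(\overline{\zeta})$ vanishes as well. Lemma \ref{.le.1}(e) supplies $\hbar(\overline{\zeta},\overline{\zeta})\neq0$. In each of the three cases I would substitute $\overline{\zeta}$ into two of the three arguments of the structural form defining the Cartan tensor $T$: the left-hand side then vanishes, every term containing a factor $C(\overline{\zeta})$ drops out, and the identity collapses to a scalar multiple of $C(\overline{X})$ that I can analyse using the non-vanishing of $\hbar(\overline{\zeta},\overline{\zeta})$.

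For (a), putting $\overline{Y}=\overline{Z}=\overline{\zeta}$ into the quasi-$C$-reducible form and discarding the two terms proportional to $C(\overline{\zeta})$ leaves $0=A(\overline{\zeta},\overline{\zeta})\,C(\overline{X})$ for all $\overline{X}$. Since $A(\overline{\zeta},\overline{\zeta})\neq0$ by hypothesis, this forces $C\equiv0$; but then every term of the cyclic sum defining $T$ carries a factor $C$, so $T=0$ and $(M,L)$ is Riemannian. For (b), I would note that the $C$-reducible form is exactly the quasi-$C$-reducible form with $A=\tfrac{1}{n+1}\hbar$, which is symmetric and satisfies $A(\overline{X},\overline{\eta})=0$ because $\hbar(\overline{X},\overline{\eta})=0$. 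Since $A(\overline{\zeta},\overline{\zeta})=\tfrac{1}{n+1}\hbar(\overline{\zeta},\overline{\zeta})\neq0$ by Lemma \ref{.le.1}(e), the hypothesis of (a) holds automatically, so (a) applies and $(M,L)$ is Riemannian with no extra assumption.

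For (c), the same substitution $\overline{Y}=\overline{Z}=\overline{\zeta}$ in the semi-$C$-reducible form annihilates the second term (it carries $C(\overline{\zeta})^{2}$) and collapses the first to $0=\tfrac{\mu}{n+1}\hbar(\overline{\zeta},\overline{\zeta})\,C(\overline{X})$. Here I would first use $C^{2}=C(\overline{C})\neq0$ to exclude $C\equiv0$, so that some $\overline{X}$ has $C(\overline{X})\neq0$; combined with $\hbar(\overline{\zeta},\overline{\zeta})\neq0$ this yields $\mu=0$, whence $\tau=1$ from $\mu+\tau=1$. Substituting these values back leaves precisely $T(\overline{X},\overline{Y},\overline{Z})=\tfrac{1}{C^{2}}C(\overline{X})C(\overline{Y})C(\overline{Z})$, the defining relation of a $C_{2}$-like space.

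I do not expect a serious obstacle: once the vanishing of $T$ and $C$ on $\overline{\zeta}$ and the non-degeneracy $\hbar(\overline{\zeta},\overline{\zeta})\neq0$ are in place, the remainder is bookkeeping. The only point requiring care is the cancellation step, where the role of the various non-vanishing hypotheses must be tracked: in (c) one must invoke $C^{2}\neq0$ to rule out the degenerate case $C\equiv0$ before concluding $\mu=0$, whereas in (a) it is instead the hypothesis $A(\overline{\zeta},\overline{\zeta})\neq0$ that licenses the cancellation and forces $C\equiv0$.
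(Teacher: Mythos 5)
Your proof is correct, and its engine is the same as the paper's: substitute $\overline{\zeta}$ into two arguments of the structural identity, use $T(\overline{X},\overline{\zeta})=0$ (Proposition \ref{2.pp.2}(a)) and its consequence $C(\overline{\zeta})=0$, then cancel against $\hbar(\overline{\zeta},\overline{\zeta})\neq 0$ (Lemma \ref{.le.1}(e)). Parts (a) and (c) match the paper's computations almost verbatim; in (c) the paper sets $\overline{Z}=\overline{C}$ where you keep a free argument and then invoke $C^{2}=C(\overline{C})\neq 0$, which is the same point. The one genuine divergence is part (b). The paper redoes the substitution in the $C$-reducible identity to obtain $C=0$ and then cites Deicke's theorem to pass from $C=0$ to $T=0$. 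You instead observe that $C$-reducibility is exactly quasi-$C$-reducibility with $A=\tfrac{1}{n+1}\hbar$ — symmetric, with $A(\overline{X},\overline{\eta})=0$ because $\hbar(\overline{X},\overline{\eta})=0$ — and that the extra hypothesis of (a) is automatic, since $A(\overline{\zeta},\overline{\zeta})=\tfrac{1}{n+1}\hbar(\overline{\zeta},\overline{\zeta})\neq 0$ by Lemma \ref{.le.1}(e). This subsumption buys two things: (b) needs no separate computation, and Deicke's theorem becomes dispensable, because in your version of (a) the conclusion $T=0$ falls out of the defining cyclic form itself (once $C\equiv 0$, every term of the cyclic sum carries a factor of $C$), rather than from the general equivalence $C=0\Leftrightarrow T=0$. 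The paper's appeal to Deicke is harmless, but your route is self-contained and slightly more economical.
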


\begin{proof}~\par
\vspace{5pt} \noindent\textbf{(a)} Follows from the defining
property of quasi-$C$-reducibility by setting
$\overline{X}=\overline{Y}=\overline{\zeta}$ and using the fact that
$C( \overline{\zeta})=0$ and the given assumption
$A(\overline{\zeta},\overline{\zeta})\neq0$.

\vspace{5pt} \noindent\textbf{(b)} Setting
$\overline{X}=\overline{Y}=\overline{\zeta}$ in (\ref{.7.eq.6}),
taking into account Proposition \ref{2.pp.2}\textbf{(a)}, Lemma \ref{.le.1}\textbf{(e)} and $C(\overline{\zeta})=0$, it follows that $C=0$, which is equivalent to $T=0$ (Deicke theorem \cite{nr2}).

\vspace{5pt}
 \noindent\textbf{(c)} Let $(M,L)$ be
semi-$C$-reducible. Setting $\overline{X}=
\overline{Y}=\overline{\zeta}$ and $\overline{Z}=\overline{C}$ in
(\ref{.7.eq.5}), taking into account Proposition \ref{2.pp.2}(a) and
$C(\overline{\zeta})=0$, we get
                  $$\mu\hbar(\overline{\zeta},\overline{\zeta})C(\overline{C})=0.$$
From which, since  $\hbar(\overline{\zeta},\overline{\zeta})\neq0$
(Lemma \ref{.le.1}(e))
and $C(\overline{C})\neq0$, it follows that $\mu=0$. \\
Consequently, $(M,L)$ is $C_{2}$-like.
\end{proof}

\begin{defn}\label{3.def.3} A Finsler manifold $(M,L)$ is said to be
  $S_{3}$-like if $dim\, M\geq 4$ and the v-curvature tensor
$S$ has the form\,{\em:}\vspace{-0.2cm}
\begin{equation}\label{.eq.5}
    S(\overline{X},\overline{Y},\overline{Z},\overline{W})=
\frac{Sc^{v}}{(n-1)(n-2)} \{
\hbar(\overline{X},\overline{Z})\hbar(\overline{Y},\overline{W})-\hbar(\overline{X},\overline{W})
\hbar(\overline{Y},\overline{Z}) \}.
\end{equation}
\end{defn}

\begin{thm}If an  $S_{3}$-like manifold
admits a concircular $\pi$-vector field $\overline{\zeta}$,
then the v-curvature tensor $S$ vanishes.
\end{thm}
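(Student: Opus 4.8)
The plan is to exploit the special structure of the $S_3$-like $v$-curvature tensor together with the established fact (Theorem~\ref{2.pp.1}(a)) that a concircular $\pi$-vector field $\overline{\zeta}$ is annihilated by $S$, namely $S(\overline{X},\overline{Y})\overline{\zeta}=0$ and $S(\overline{X},\overline{Y},\overline{Z},\overline{\zeta})=0$. The idea is to substitute $\overline{W}=\overline{\zeta}$ into the defining identity~(\ref{.eq.5}) and force the scalar curvature factor $Sc^v$ to vanish, which will immediately collapse the whole tensor.

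First I would set $\overline{W}=\overline{\zeta}$ in~(\ref{.eq.5}). The left-hand side is $S(\overline{X},\overline{Y},\overline{Z},\overline{\zeta})$, which is zero by Theorem~\ref{2.pp.1}(a). The right-hand side becomes
\[
\frac{Sc^{v}}{(n-1)(n-2)}\bigl\{\hbar(\overline{X},\overline{Z})\hbar(\overline{Y},\overline{\zeta})-\hbar(\overline{X},\overline{\zeta})\hbar(\overline{Y},\overline{Z})\bigr\}.
\]
So I obtain the identity $Sc^{v}\bigl\{\hbar(\overline{X},\overline{Z})\hbar(\overline{Y},\overline{\zeta})-\hbar(\overline{X},\overline{\zeta})\hbar(\overline{Y},\overline{Z})\bigr\}=0$ for all $\overline{X},\overline{Y},\overline{Z}$. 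The goal now is to choose the free arguments so that the bracketed expression is nonzero, which then forces $Sc^v=0$.

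The natural specialization is to further set $\overline{X}=\overline{Z}=\overline{\zeta}$, reducing the bracket to $\hbar(\overline{\zeta},\overline{\zeta})\hbar(\overline{Y},\overline{\zeta})-\hbar(\overline{\zeta},\overline{\zeta})\hbar(\overline{Y},\overline{\zeta})$, which unfortunately cancels. So instead I would keep $\overline{Y}$ generic and set only $\overline{X}=\overline{Z}=\overline{\zeta}$, giving $Sc^{v}\,\hbar(\overline{\zeta},\overline{\zeta})\,\hbar(\overline{Y},\overline{\zeta})=Sc^v\hbar(\overline{\zeta},\overline{\zeta})\hbar(\overline{\zeta},\overline{Y})$; this too telescopes. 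The cleaner route is to take $\overline{Y}=\overline{Z}=\overline{\zeta}$ and keep $\overline{X}$ arbitrary, so the bracket becomes $\hbar(\overline{X},\overline{\zeta})\hbar(\overline{\zeta},\overline{\zeta})-\hbar(\overline{X},\overline{\zeta})\hbar(\overline{\zeta},\overline{\zeta})=0$ again. The correct contraction is to set $\overline{X}=\overline{Y}=\overline{\zeta}$ in the original~(\ref{.eq.5}) but with $\overline{W}=\overline{Z}=\overline{\zeta}$ avoided; rather, I put $\overline{Z}=\overline{\zeta}$, $\overline{W}$ free and use $S(\overline{X},\overline{Y},\overline{\zeta},\overline{W})=0$, leading to $Sc^v\{\hbar(\overline{X},\overline{\zeta})\hbar(\overline{Y},\overline{W})-\hbar(\overline{X},\overline{W})\hbar(\overline{Y},\overline{\zeta})\}=0$; now choosing $\overline{X}=\overline{\zeta}$, $\overline{W}=\overline{Y}$ gives $Sc^v\{\hbar(\overline{\zeta},\overline{\zeta})\hbar(\overline{Y},\overline{Y})-\hbar(\overline{\zeta},\overline{Y})^2\}=0$. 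By Lemma~\ref{.le.1}(e), $\hbar(\overline{\zeta},\overline{\zeta})\neq0$, and by the Cauchy--Schwarz-type positivity of the angular metric on vectors independent from $\overline{\eta}$, the Gram expression $\hbar(\overline{\zeta},\overline{\zeta})\hbar(\overline{Y},\overline{Y})-\hbar(\overline{\zeta},\overline{Y})^2$ is nonzero for a suitable choice of $\overline{Y}$. This forces $Sc^v=0$.

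The main obstacle is precisely this last step: justifying that the coefficient multiplying $Sc^v$ can be made nonzero. It is not enough to have $\hbar(\overline{\zeta},\overline{\zeta})\neq0$, since the bracket is an antisymmetrized product that can vanish identically for careless substitutions. I expect the honest argument to invoke the nondegeneracy of $\hbar$ on the orthogonal complement of $\overline{\eta}$, picking $\overline{Y}$ so that $\overline{\zeta}$ and $\overline{Y}$ are $\hbar$-linearly independent (possible since $\dim M\geq4$ guarantees enough room); then the Gram determinant $\hbar(\overline{\zeta},\overline{\zeta})\hbar(\overline{Y},\overline{Y})-\hbar(\overline{\zeta},\overline{Y})^2\neq0$. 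Once $Sc^v=0$ is secured, substituting back into~(\ref{.eq.5}) makes the entire right-hand side vanish, so $S(\overline{X},\overline{Y},\overline{Z},\overline{W})=0$ for all arguments, and by nondegeneracy of $g$ the $v$-curvature tensor $S$ itself is zero, completing the proof.
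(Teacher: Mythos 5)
Your proposal is correct and rests on the same pillars as the paper's proof: substitute $\overline{\zeta}$ into the $S_3$-like form (\ref{.eq.5}), kill the left-hand side with Theorem \ref{2.pp.1}(a), and use $\hbar(\overline{\zeta},\overline{\zeta})\neq 0$ (Lemma \ref{.le.1}(e)) to force $Sc^v=0$, after which (\ref{.eq.5}) gives $S=0$. The difference is in how $Sc^v=0$ is extracted from the identity
\[
Sc^{v}\left\{\hbar(\overline{\zeta},\overline{\zeta})\hbar(\overline{Y},\overline{W})-\hbar(\overline{\zeta},\overline{W})\hbar(\overline{Y},\overline{\zeta})\right\}=0 .
\]
The paper takes the trace over $(\overline{Y},\overline{W})$: since $\mathrm{Tr}\,\hbar=n-1$ and $\hbar$, viewed as an operator, is the $g$-orthogonal projection along $\overline{\eta}$ (hence idempotent, so the second term traces to $\hbar(\overline{\zeta},\overline{\zeta})$), this yields $(n-2)\,Sc^{v}\,\hbar(\overline{\zeta},\overline{\zeta})=0$ with no choices to be made and no signature assumptions. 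You instead specialize $\overline{W}=\overline{Y}$ and argue the Gram expression $\hbar(\overline{\zeta},\overline{\zeta})\hbar(\overline{Y},\overline{Y})-\hbar(\overline{\zeta},\overline{Y})^{2}$ is nonzero for suitable $\overline{Y}$. That finish is valid, but be aware it genuinely needs positive (semi)definiteness of $\hbar$ with $\ker\hbar=\mathrm{span}\{\overline{\eta}\}$ (equivalently, positive definiteness of $g$, which is the standard Finsler hypothesis), not merely nondegeneracy of $\hbar$ on $\overline{\eta}^{\perp}$ as you state: in indefinite signature, linearly independent vectors can have vanishing Gram determinant, so "$\hbar$-linear independence" alone would not suffice. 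Your remark that $\dim M\geq 4$ leaves room to pick $\overline{Y}\notin\mathrm{span}\{\overline{\zeta},\overline{\eta}\}$ then closes the argument. Note also that your unspecialized identity already finishes the proof signature-free: if $Sc^{v}\neq0$, it says $\hbar(\overline{\zeta},\overline{\zeta})\,\hbar=\hbar(\overline{\zeta},\cdot)\otimes\hbar(\overline{\zeta},\cdot)$, i.e.\ $\hbar$ has rank at most one, contradicting $\mathrm{rank}\,\hbar=n-1\geq 3$. In short, both routes work; the paper's trace is the cleaner and more economical one, while yours buys a concrete geometric picture at the price of a definiteness assumption.
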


\begin{proof}
  Setting $\overline{X}=\overline{Z}=\overline{\zeta}$ in (\ref{.eq.5}), taking Theorem \ref{2.pp.1}
  into account, we immediately get
$$ \frac{Sc^{v}}{(n-1)(n-2)} \{
\hbar(\overline{\zeta},\overline{\zeta})\hbar(\overline{Y},\overline{W})-\hbar(\overline{\zeta},\overline{W})
\hbar(\overline{Y},\overline{\zeta}) \}=0$$
Taking the trace of the above equation, we have
$$\frac{Sc^{v}}{(n-1)(n-2)} \{
(n-1)\hbar(\overline{\zeta},\overline{\zeta})-\hbar(\overline{\zeta},\overline{\zeta})
\}=0$$ Consequently,
$$\frac{Sc^{v}}{(n-1)} \hbar(\overline{\zeta},\overline{\zeta})=0$$
From which, since $\hbar(\overline{\zeta},\overline{\zeta})\neq0$
(Lemma \ref{.le.1}(e)),  the vertical scalar curvature  $Sc^{v}$
vanishes. Now, again, from (\ref{.eq.5}), the result follows.
\end{proof}

\begin{defn}\label{.7.def.8} A Finsler manifold $(M,L)$, where  $dim\, M\geq 3$,
is said to be\,{\em :}\vspace{-0.1cm}
\begin{description}
  \item[(a)] $P_{2}$-like if the hv-curvature tensor $P$ has the
form\,{\em:}\vspace{-0.1cm}
\begin{equation}\label{02}
   P(\overline{X},\overline{Y},\overline{Z}, \overline{W})=
 \varphi(\overline{Z})T (\overline{X},\overline{Y},\overline{W})
 -\varphi(\overline{W})\, T(\overline{X},\overline{Y},\overline{Z}),\vspace{-0.1cm}
\end{equation}
 where $\varphi$ is
  a $(1)\,\pi$-form, positively homogeneous of degree $0$.

 \item[(b)]$P$-reducible
  if  the $\pi$-tensor field
 $\widehat{P}(\overline{X},\overline{Y},\overline{Z}):
  =g(\widehat{P}(\overline{X},\overline{Y}),\overline{Z})$ has the form\vspace{-0.1cm}
\begin{equation}\label{020}\widehat{P}(\overline{X},\overline{Y},\overline{Z})=\delta(\overline{X})\hbar (\overline{Y},\overline{Z})
  +\delta(\overline{Y})\hbar (\overline{X},\overline{Z})+ \delta(\overline{Z})\hbar
  (\overline{X},\overline{Y}),\vspace{-0.1cm}
  \end{equation}
  where $\delta$ is
  the $(1)\pi$-form defined by  $\delta(\overline{X})=\frac{1}{n+1}(\nabla_{\beta \overline{\eta}}\,C)(\overline{X}).$
\end{description}
  \end{defn}

\begin{thm}
Let $(M,L)$ be a Finsler manifold \em{($\dim M \geq 3$)} admitting a
concircular $\pi$-vector field $\overline{\zeta}$. \vspace{-0.cm}
\begin{description}
  \item[(a)] If $(M,L)$ is $P_{2}$-like, then it is Riemannian, provided that $\varphi(\overline{\zeta})\neq \psi(x)$.

  \item[(b)] If $(M,L)$ is $P$-reducible, then it is Landsbergian.
\end{description}

\end{thm}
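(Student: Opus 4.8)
The plan is to treat the two parts separately; in each case I would feed the concircularity identities of Theorem \ref{2.pp.1} and Proposition \ref{2.pp.2} into the structure equation defining the special space, and then exploit the non-vanishing facts collected in Lemma \ref{.le.1}.

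For \textbf{(a)}, I would substitute $\overline{W}=\overline{\zeta}$ into the defining relation (\ref{02}) of a $P_{2}$-like space. The first term $\varphi(\overline{Z})\,T(\overline{X},\overline{Y},\overline{\zeta})$ drops out, because
$T(\overline{X},\overline{Y},\overline{\zeta})=g(T(\overline{X},\overline{Y}),\overline{\zeta})=g(T(\overline{X},\overline{\zeta}),\overline{Y})=0$
by the total symmetry of the Cartan tensor together with Proposition \ref{2.pp.2}(a). This leaves $P(\overline{X},\overline{Y},\overline{Z},\overline{\zeta})=-\varphi(\overline{\zeta})\,T(\overline{X},\overline{Y},\overline{Z})$. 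Comparing with Theorem \ref{2.pp.1}(e), which gives $P(\overline{X},\overline{Y},\overline{Z},\overline{\zeta})=-\psi(x)\,T(\overline{X},\overline{Y},\overline{Z})$, yields $(\psi(x)-\varphi(\overline{\zeta}))\,T(\overline{X},\overline{Y},\overline{Z})=0$. Since $\varphi(\overline{\zeta})\neq\psi(x)$ by hypothesis, I conclude $T=0$, i.e. $(M,L)$ is Riemannian.

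For \textbf{(b)}, I would substitute the concircular field into the $P$-reducibility relation (\ref{020}). Putting $\overline{Y}=\overline{\zeta}$ makes the left-hand side $\widehat{P}(\overline{X},\overline{\zeta},\overline{Z})=g(\widehat{P}(\overline{X},\overline{\zeta}),\overline{Z})$, which vanishes by Proposition \ref{2.pp.2}(b). This produces the identity
\[
0=\delta(\overline{X})\hbar(\overline{\zeta},\overline{Z})+\delta(\overline{\zeta})\hbar(\overline{X},\overline{Z})+\delta(\overline{Z})\hbar(\overline{X},\overline{\zeta})
\]
for all $\overline{X},\overline{Z}$. The decisive step is to extract $\delta=0$ from this three-term relation: first set $\overline{X}=\overline{Z}=\overline{\zeta}$ to get $3\,\delta(\overline{\zeta})\,\hbar(\overline{\zeta},\overline{\zeta})=0$, whence $\delta(\overline{\zeta})=0$ since $\hbar(\overline{\zeta},\overline{\zeta})\neq0$ (Lemma \ref{.le.1}(e)); then, with $\delta(\overline{\zeta})=0$ in hand, set $\overline{Z}=\overline{\zeta}$ to obtain $\delta(\overline{X})\,\hbar(\overline{\zeta},\overline{\zeta})=0$, giving $\delta\equiv0$. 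Feeding $\delta=0$ back into (\ref{020}) forces $\widehat{P}=0$, i.e. $(M,L)$ is Landsberg.

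I expect the only genuine obstacle to be the bookkeeping in part \textbf{(b)}: the symmetric three-term expression does not collapse in a single substitution, so the order of the specializations matters, and at each stage I must ensure that the relevant value $\hbar(\overline{\zeta},\overline{\zeta})$ is nonzero, which is precisely what Lemma \ref{.le.1}(e) guarantees. Part \textbf{(a)} is essentially a one-line comparison once the symmetry of $T$ and Proposition \ref{2.pp.2}(a) are invoked.
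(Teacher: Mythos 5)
Your proposal is correct in both parts, and part \textbf{(a)} is essentially the paper's own argument: the paper substitutes $\overline{Z}=\overline{\zeta}$ into (\ref{02}) while you substitute $\overline{W}=\overline{\zeta}$, but since (\ref{02}) is antisymmetric in $\overline{Z},\overline{W}$ these are the same computation, both hinging on Theorem \ref{2.pp.1}(e) and Proposition \ref{2.pp.2}(a). In part \textbf{(b)} you deviate slightly, and in a way worth noting: the paper first invokes the auxiliary identity $(\nabla_{\beta\overline{\eta}}C)(\overline{\zeta})=0$ (i.e.\ $\delta(\overline{\zeta})=0$), which requires a separate small computation using $C(\overline{\zeta})=C(\overline{\eta})=0$ and the concircularity of $\overline{\zeta}$, and only then sets $\overline{X}=\overline{Y}=\overline{\zeta}$ in (\ref{020}) to conclude $\delta(\overline{Z})\hbar(\overline{\zeta},\overline{\zeta})=0$. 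You instead extract $\delta(\overline{\zeta})=0$ from the $P$-reducibility relation itself, by the full substitution $\overline{X}=\overline{Y}=\overline{Z}=\overline{\zeta}$ (giving $3\delta(\overline{\zeta})\hbar(\overline{\zeta},\overline{\zeta})=0$), and then specialize again to kill $\delta$ entirely. Both routes rest on the same two pillars, namely $\widehat{P}(\cdot,\overline{\zeta})=0$ from Proposition \ref{2.pp.2}(b) and $\hbar(\overline{\zeta},\overline{\zeta})\neq0$ from Lemma \ref{.le.1}(e); your version has the small advantage of being self-contained within the defining relation (\ref{020}), at the cost of one extra specialization, whereas the paper's version records the reusable fact $\delta(\overline{\zeta})=0$ along the way.
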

\begin{proof}~\par
\vspace{5pt}
 \noindent\textbf{(a)} Setting $\overline{Z}=\overline{\zeta}$ in (\ref{02}), taking into account
Theorem \ref{2.pp.1} and Proposition \ref{2.pp.2}, we immediately
get $$\left(\varphi(\overline{\zeta})-\psi(x)\right)
T(\overline{X},\overline{Y})=0.$$
 Hence, the result follows.

\vspace{5pt}
 \noindent\textbf{(b)} Setting $\overline{X}= \overline{Y}=\overline{\zeta}$ in
(\ref{020}) and using the identity $(\nabla_{\beta
\overline{\eta}}C)( \overline{\zeta})=0$, we conclude that
$\hbar(\overline{\zeta},\overline{\zeta})(\nabla_{\beta
\overline{\eta}}C)( \overline{Z})=0$, with
$\hbar(\overline{\zeta},\overline{\zeta})\neq0$ (Lemma
\ref{.le.1}(e)). Consequently, $\nabla_{\beta \overline{\eta}}C=0$.
Hence, again, from Definition \ref{.7.def.8}(b), the (v)hv-torsion
tensor $\widehat{P}=0$.
\end{proof}

\begin{defn}\label{.7.def.9} A Finsler manifold $(M,L)$ of $dim\, M\geq 3$
is said to be $h$-isotropic  if there exists a scalar function
$k_{o}$ such that the horizontal curvature tensor $R$ has the
form\vspace{-0.2cm}
$$R(\overline{X},\overline{Y})\overline{Z}=k_{o} \{g(\overline{X},\overline{Z})
\overline{Y}-g(\overline{Y},\overline{Z})\overline{X} \},$$ where
$k_{o}$ is called  the scalar curvature.
\end{defn}

\begin{thm}For an $h$-isotropic Finsler manifold
 admitting  a concircular $\pi$-vector field $\overline{\zeta}$,
the scalar curvature $k_{o}$ is given by
$$k_{o}=-\frac{A(\overline{m})}{g(\overline{m},\overline{\zeta})},$$
where \, $A:=\mu-\psi(x)\alpha$.
\end{thm}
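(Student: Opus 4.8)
The plan is to compare the two available expressions for $R(\overline{X},\overline{Y})\overline{\zeta}$ and then to exploit the rank of $\cp$. First I would rewrite Theorem \ref{2.pp.1}(\textbf{i}) using the abbreviation $A := \mu - \psi(x)\alpha$, so that the alternate sum unfolds to
$$R(\overline{X},\overline{Y})\overline{\zeta} = A(\overline{Y})\overline{X} - A(\overline{X})\overline{Y}.$$
On the other hand, the defining relation of $h$-isotropy (Definition \ref{.7.def.9}) with $\overline{Z} = \overline{\zeta}$ gives
$$R(\overline{X},\overline{Y})\overline{\zeta} = k_{o}\{g(\overline{X},\overline{\zeta})\overline{Y} - g(\overline{Y},\overline{\zeta})\overline{X}\} = k_{o}\{\omega(\overline{X})\overline{Y} - \omega(\overline{Y})\overline{X}\},$$
where $\omega = i_{\overline{\zeta}}\,g$ is the associated $\pi$-form of Lemma \ref{2.le.1}.

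Equating these two expressions and collecting terms would produce, for the $(1)\pi$-form $B := A + k_{o}\,\omega$, the identity $B(\overline{Y})\overline{X} = B(\overline{X})\overline{Y}$ for all $\overline{X},\overline{Y} \in \cp$. The crux of the argument is then a rank/linear-independence observation: since $\dim M \geq 3$, the module $\cp$ has rank at least two, so for any given $\overline{X}_{0}$ one may choose $\overline{Y}_{0}$ not proportional to it; the relation $B(\overline{Y}_{0})\overline{X}_{0} = B(\overline{X}_{0})\overline{Y}_{0}$ then forces both scalar coefficients to vanish, whence $B \equiv 0$, i.e.\ $A = -k_{o}\,\omega$. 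I expect this to be the only genuinely delicate step, although it carries no analytic difficulty — it is purely the remark that a nonzero multiple of $\overline{X}_{0}$ cannot equal a multiple of an independent $\overline{Y}_{0}$.

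Finally, I would evaluate $A = -k_{o}\,\omega$ at $\overline{X} = \overline{m}$, using $\omega(\overline{m}) = g(\overline{\zeta},\overline{m}) = g(\overline{m},\overline{\zeta})$, to obtain $A(\overline{m}) = -k_{o}\,g(\overline{m},\overline{\zeta})$. Since Lemma \ref{.le.1}(\textbf{d}) guarantees $g(\overline{m},\overline{\zeta}) = g(\overline{m},\overline{m}) \neq 0$, dividing yields $k_{o} = -A(\overline{m})/g(\overline{m},\overline{\zeta})$, as claimed. I note in passing that the intermediate identity $A = -k_{o}\,\omega$ is consistent with the proof of Proposition \ref{2.pp.2}(\textbf{f}), where $A$ was already shown to be a scalar multiple of $\omega$; one could alternatively read off the proportionality constant as $-k_{o}$ directly, but the rank argument above keeps the proof self-contained and explains why $\overline{m}$ (rather than $\overline{\zeta}$) provides a guaranteed-nonzero denominator.
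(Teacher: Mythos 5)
Your proof is correct, but it extracts the scalar $k_{o}$ by a different mechanism than the paper does. The paper specializes immediately: it sets $\overline{X}=\overline{m}$ (as well as $\overline{Z}=\overline{\zeta}$) in Definition \ref{.7.def.9}, equates with Theorem \ref{2.pp.1}(\textbf{i}), and then takes the \emph{trace} of the resulting endomorphism identity in $\overline{Y}$, which in one line gives $k_{o}(n-1)g(\overline{m},\overline{\zeta})=(1-n)A(\overline{m})$ and hence the formula. You instead keep $\overline{X},\overline{Y}$ general, rewrite the equality of the two expressions for $R(\overline{X},\overline{Y})\overline{\zeta}$ as $B(\overline{Y})\overline{X}=B(\overline{X})\overline{Y}$ with $B:=A+k_{o}\,\omega$, and invoke fibrewise linear independence (available since $\dim M\geq 3$ in Definition \ref{.7.def.9}) to conclude $B\equiv 0$, i.e.\ $A=-k_{o}\,\omega$, before evaluating at $\overline{m}$. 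Both arguments are sound and both rely on Lemma \ref{.le.1}(\textbf{d}) for the nonvanishing denominator $g(\overline{m},\overline{\zeta})=g(\overline{m},\overline{m})\neq 0$; your choice of $\overline{m}$ over $\overline{\zeta}$ is exactly right, since the lemma guarantees nothing about $g(\overline{\zeta},\overline{\zeta})$ itself. What the trace buys is brevity and no need for any independence discussion; what your route buys is the stronger global identity $A=-k_{o}\,\omega$, which identifies the proportionality factor $\lambda$ left unspecified in the proof of Proposition \ref{2.pp.2}(\textbf{f}) and makes clear that the right-hand side of the stated formula is independent of the point at which it is evaluated. One small remark: your argument needs fibre dimension at least $2$, while the paper's trace computation needs $n\neq 1$; both are guaranteed here, so the difference is immaterial.
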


\begin{proof} From Definition \ref{.7.def.9}, by setting $\overline{Z}=\overline{\zeta}$ and $\overline{X}=
\overline{m}$, we have
\begin{equation}\label{.eq.6}
    R(\overline{m},\overline{Y})\overline{\zeta}=k_{o} \{g(\overline{X},\overline{\zeta})
\overline{Y}-g(\overline{Y},\overline{\zeta})\overline{m}\}.
\end{equation}
On the other hand, using  Theorem \ref{2.pp.1}\textbf{(i)}, we
have
\begin{equation}\label{.eq.6a}
    R(\overline{m},\overline{Y})\overline{\zeta}=A(\overline{Y})\overline{m}-A(\overline{m})\overline{Y},
\end{equation}
From (\ref{.eq.6}) and (\ref{.eq.6a}), it follows that
$$k_{o} \{g(\overline{m},\overline{\zeta})
\overline{Y}-g(\overline{Y},\overline{\zeta})\overline{m}\}=A(\overline{Y})\overline{m}-A(\overline{m})\overline{Y}.$$
Taking the trace of the above equation, we get
$$k_{o} (n-1)g(\overline{m},\overline{\zeta})=(1-n)A(\overline{m}).$$
Hence, the scalar $k_{o}$ is given by
\begin{equation}\label{k}
    k_{o}=-\frac{A(\overline{m})}{g(\overline{m},\overline{\zeta})}.
\end{equation}
This completes the proof.
\end{proof}

\begin{cor} For an $h$-isotropic Finsler manifold
 admitting  a concurrent $\pi$-vector field $\overline{\zeta}$,
the $h$-curvature $R$ vanishes.
\end{cor}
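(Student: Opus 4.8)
The plan is to specialize the preceding theorem to the concurrent case. Recall from Definition \ref{2.def.1} that a concurrent $\pi$-vector field is exactly a concircular one for which $\sigma(x)$ is constant and $\psi(x)=-1$. So the first step is to read off what the two $1$-forms $\alpha$ and $\mu$ become under these hypotheses. Since $\alpha(\overline{X})=d\sigma(\beta\overline{X})$ and $\sigma$ is constant, we get $\alpha=0$; since $\mu(\overline{X})=d\psi(\beta\overline{X})$ (as in Remark \ref{rem.1}) and $\psi=-1$ is constant, we get $\mu=0$.

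Consequently the $\pi$-form $A:=\mu-\psi(x)\alpha$ appearing in the previous theorem vanishes identically, $A=0$. Substituting this into the formula (\ref{k}) for the scalar curvature gives $k_{o}=-A(\overline{m})/g(\overline{m},\overline{\zeta})=0$, where the denominator is nonzero by Lemma \ref{.le.1}(d), so the quotient is legitimately defined and equal to zero.

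Finally, since $(M,L)$ is $h$-isotropic, Definition \ref{.7.def.9} gives $R(\overline{X},\overline{Y})\overline{Z}=k_{o}\{g(\overline{X},\overline{Z})\overline{Y}-g(\overline{Y},\overline{Z})\overline{X}\}$ for all $\overline{X},\overline{Y},\overline{Z}\in\cp$; with $k_{o}=0$ this forces $R(\overline{X},\overline{Y})\overline{Z}=0$, i.e. the $h$-curvature $R$ vanishes. There is essentially no obstacle here: the only point requiring care is correctly identifying both $\alpha$ and $\mu$ as exterior derivatives of the now-constant scalars $\sigma$ and $\psi$ pulled back along $\beta$, so that both are zero — after which the conclusion is immediate from the curvature formula and the non-vanishing of $g(\overline{m},\overline{\zeta})$.
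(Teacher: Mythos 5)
Your proposal is correct and follows essentially the same route as the paper's own proof: concurrent implies $A:=\mu-\psi(x)\alpha=0$, hence $k_{o}=0$ by (\ref{k}), hence $R=0$ by Definition \ref{.7.def.9}. The only difference is that you spell out why $A$ vanishes ($\alpha=d\sigma\circ\beta=0$ and $\mu=d\psi\circ\beta=0$ for constant $\sigma$ and $\psi=-1$) and why the denominator $g(\overline{m},\overline{\zeta})$ is nonzero, details the paper leaves implicit.
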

\begin{proof} If $\overline{\zeta}$ is concurrent, then the $\pi$-form $A$ vanishes.
Hence, using (\ref{k}),  the scalar $k_{o}$ vanishes. Consequently,
from Definition \ref{.7.def.9}, the $h$-curvature $R$ vanishes.
\end{proof}

%%%%%%%%%%%%%%%%%%%%%%%%%%%%%%%%%%%%%%%%%%%%%%%%%%% 4. Different types of recurrence %%%%%%%%%%%%%%%%%%%%%%%%%%%%%%%%%%%%%%%%%%%%%%%%%

\Section{Different types of recurrent Finsler manifolds admitting
concircular $\pi$-vector fields}

In this section, we investigate intrinsically the effect of the existence of a concircular
$\pi$-vector field on recurrent  Finsler  manifolds.
We study different types of recurrence (with respect to Cartan connection).

\vspace{5pt}
Let us begin with the first type of recurrence related to the
Cartan tensor $T$. \vspace{-5pt}
\begin{defn}\label{.def.1}A Finsler manifold $(M,L)$ is said to be $T^h$-recurrent  if the (h)hv-torsion tensor
 $T$  has the property that
  $$\stackrel{h}\nabla\,T
 =\lambda_{1} \otimes T,$$
 where $\lambda_{1}$ is a scalar (1) $\pi$-form, positively homogenous  of degree zero in
 $y$, called the $h$-recurrence form.\\
 Similarly, $(M,L)$ is called $T^v$-recurrent  if the (h)hv-torsion tensor
 $T$  has the property that
  $$\stackrel{v}\nabla T
 =\lambda_{2}\otimes T,$$
 where $\lambda_{2}$ is a scalar (1) $\pi$-form, positively homogenous  of degree $-1$ in
 $y$,  called the $v$-recurrence form.
\end{defn}

\begin{thm}If a $T^{h}$-recurrent Finsler manifold admits a concircular $\pi$-vector field $\overline{\zeta} $, then it  is
Riemannian, provided that $\lambda_{1} (\overline{\zeta})\neq0$.
\end{thm}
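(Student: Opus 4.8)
The plan is to push everything through the single structural fact that a concircular $\pi$-vector field is annihilated by the Cartan tensor. Indeed, Proposition \ref{2.pp.2}(a) gives $T(\overline{X},\overline{\zeta})=0$ for every $\overline{X}\in\cp$. Since this is an identity between $\pi$-vector fields, I would differentiate it $h$-covariantly and let the recurrence hypothesis collapse the result. The guiding idea is that the only surviving term of the differentiated identity will carry the factor $\psi(x)$, which is nonzero by Definition \ref{2.def.1}.

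Concretely, I would apply $\nabla_{\beta\overline{Z}}$ to $T(\overline{X},\overline{\zeta})=0$ and expand by the Leibniz rule for the vector-valued $(h)hv$-torsion:
$$(\nabla_{\beta\overline{Z}}T)(\overline{X},\overline{\zeta})
=\nabla_{\beta\overline{Z}}\big(T(\overline{X},\overline{\zeta})\big)
-T(\nabla_{\beta\overline{Z}}\overline{X},\overline{\zeta})
-T(\overline{X},\nabla_{\beta\overline{Z}}\overline{\zeta}).$$
The first term vanishes because $T(\overline{X},\overline{\zeta})\equiv0$, and the second vanishes again by Proposition \ref{2.pp.2}(a), since $\nabla_{\beta\overline{Z}}\overline{X}$ is merely another $\pi$-vector field paired with $\overline{\zeta}$. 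For the third term I would insert the defining relation (\ref{11.eq.1}), $\nabla_{\beta\overline{Z}}\overline{\zeta}=\alpha(\overline{Z})\overline{\zeta}+\psi(x)\overline{Z}$; here the $\alpha(\overline{Z})\overline{\zeta}$ part is killed once more by $T(\overline{X},\overline{\zeta})=0$, and only the $\psi(x)\overline{Z}$ part survives. This yields the key identity
$$(\nabla_{\beta\overline{Z}}T)(\overline{X},\overline{\zeta})=-\psi(x)\,T(\overline{X},\overline{Z}).$$

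Now I would bring in the $T^{h}$-recurrence of Definition \ref{.def.1}: evaluating $\stackrel{h}\nabla T=\lambda_{1}\otimes T$ in the second argument $\overline{\zeta}$ gives $(\nabla_{\beta\overline{Z}}T)(\overline{X},\overline{\zeta})=\lambda_{1}(\overline{Z})\,T(\overline{X},\overline{\zeta})=0$, so the left-hand side of the key identity is zero. Comparing the two expressions leaves $\psi(x)\,T(\overline{X},\overline{Z})=0$ for all $\overline{X},\overline{Z}$, and since $\psi(x)\neq0$ we obtain $T=0$, i.e.\ $(M,L)$ is Riemannian by Definition \ref{7.def.2}(a). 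I expect no genuine analytic obstacle: the entire argument rests on the orthogonality-type identity $T(\,\cdot\,,\overline{\zeta})=0$ plus the Leibniz rule, and the only point requiring care is the bookkeeping in the expansion—one must check that exactly two of the three terms die and that the surviving factor is the nonvanishing $\psi(x)$. (In the form stated, the operative nonvanishing scalar is $\psi(x)$; the hypothesis $\lambda_{1}(\overline{\zeta})\neq0$ is the condition under which the conclusion is asserted and enters naturally if one instead specializes the recurrence to the direction $\overline{\zeta}$.)
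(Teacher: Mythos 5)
Your proof is correct, but it is not the paper's argument, and it is in fact sharper. The paper proves the theorem by going through the $hv$-curvature: it takes the identity (\ref{01}) from \cite{r96} expressing $P(\overline{X},\overline{Y},\overline{Z},\overline{W})$ in terms of $\nabla_{\beta}T$ and $\widehat{P}$, sets $\overline{W}=\overline{\zeta}$, and uses Theorem \ref{2.pp.1}(e), Proposition \ref{2.pp.2}(a),(b) and the symmetry $g((\nabla_{\beta \overline{Z}}T)(\overline{X},\overline{Y}),\overline{W})=g((\nabla_{\beta\overline{Z}}T)(\overline{X},\overline{W}),\overline{Y})$ to conclude $\nabla_{\beta\overline{\zeta}}T=0$; only then is the recurrence invoked, with $\overline{\zeta}$ inserted in the \emph{differentiation} slot, giving $\nabla_{\beta\overline{\zeta}}T=\lambda_{1}(\overline{\zeta})T$, so the proviso $\lambda_{1}(\overline{\zeta})\neq0$ is indispensable to that argument. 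You instead differentiate the identity $T(\cdot,\overline{\zeta})=0$ of Proposition \ref{2.pp.2}(a) by the Leibniz rule --- essentially the same computation the paper performs inside the proof of Proposition \ref{2.pp.2}(b) --- obtaining $(\nabla_{\beta\overline{Z}}T)(\overline{X},\overline{\zeta})=-\psi(x)T(\overline{X},\overline{Z})$, and you insert $\overline{\zeta}$ in a tensor \emph{argument} of the recurrence relation rather than in the differentiation direction, so the recurrence side vanishes identically and the nonvanishing factor doing the work is $\psi(x)$ rather than $\lambda_{1}(\overline{\zeta})$. Both uses of $\stackrel{h}\nabla T=\lambda_{1}\otimes T$ are legitimate, and every step of yours checks out. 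What your route buys: it bypasses the curvature machinery entirely, and it proves a strictly stronger statement --- the hypothesis $\lambda_{1}(\overline{\zeta})\neq0$ is never used, so a $T^{h}$-recurrent manifold admitting a concircular $\pi$-vector field is Riemannian unconditionally, as your closing parenthesis correctly observes. What the paper's route buys: the template of pairing a curvature identity from Theorem \ref{2.pp.1} against the recurrence law in the direction $\overline{Z}=\overline{\zeta}$ or $\overline{\eta}$ is reused almost verbatim in the later $S^{h}$-, $P^{h}$- and $R^{h}$-recurrent theorems, so the paper's proof fits a uniform scheme at the cost of carrying a proviso your argument shows to be superfluous.
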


\begin{proof} We have \cite{r96}
\begin{equation*}
\left.
    \begin{array}{rcl}
   P(\overline{X},\overline{Y},\overline{Z},\overline{W})&=&
   g((\nabla_{\beta\overline{Z}}T)(\overline{Y},\overline{X}),
\overline{W})
   -g((\nabla_{\beta
\overline{W}}T)(\overline{Y},\overline{X}), \overline{Z})
   \\
    &&-g(T(\overline{X},\overline{W}),\widehat{P}(\overline{Z},\overline{Y}))
   +g(T(\overline{X},\overline{Z}),\widehat{P}(\overline{W},\overline{Y})).
\end{array}
  \right.
\end{equation*}
  Setting $\overline{W}=\overline{\zeta}$, making use of Theorem \ref{2.pp.1},
Proposition  \ref{2.pp.2} and  the identity \cite{r96} $$g((
\nabla_{\beta \overline{Z}}T)(\overline{X},
\overline{Y}),\overline{W})=g(( \nabla_{\beta
\overline{Z}}T)(\overline{X}, \overline{W}),\overline{Y}),$$
 we get
\begin{equation*}
   \nabla_{\beta \overline{\zeta}}T=0.
\end{equation*}
On the other hand,  Definition \ref{.def.1} yields
\begin{equation*}
   \nabla_{\beta \overline{\zeta}}T=\lambda_{1}(\overline{\zeta})T.
\end{equation*}
Under the given assumption, the  above two equations imply that
$T=0$. Hence,  $(M,L)$ is Riemannian.
\end{proof}

In view of the above theorem, we have.\vspace{-5pt}
\begin{cor}\label{cor.1} In the presence of a concircular $\pi$-vector field $\overline{\zeta}$,
the three notions of being $T^{h}$-recurrent, $T^v$-recurrent and
Riemannian coincide, provided that $\lambda_{1}
(\overline{\zeta})\neq0$.
\end{cor}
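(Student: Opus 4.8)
The plan is to establish the two substantive implications separately and to observe that the Riemannian case makes every recurrence equation hold automatically, so that the three classes collapse into one. First I would record the trivial half: if $(M,L)$ is Riemannian then $T=0$, and hence $\stackrel{h}\nabla T = 0 = \lambda_{1}\otimes T$ and $\stackrel{v}\nabla T = 0 = \lambda_{2}\otimes T$ hold for any recurrence forms. Thus Riemannian implies both $T^{h}$-recurrent and $T^{v}$-recurrent, and all the content lies in the converse directions.

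For the implication $T^{h}$-recurrent $\Rightarrow$ Riemannian I would simply invoke the preceding theorem: under the standing hypothesis $\lambda_{1}(\overline{\zeta})\neq 0$ it yields $T=0$ directly. Together with the trivial converse this already gives $T^{h}$-recurrent $\Leftrightarrow$ Riemannian, so no new computation is required here.

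The remaining step, $T^{v}$-recurrent $\Rightarrow$ Riemannian, is where I expect the only real work. The key is to exploit the normalization $T(\overline{X},\overline{\eta})=0$ of the Cartan $(h)hv$-torsion together with the connection identity $\nabla_{\gamma\overline{Z}}\,\overline{\eta}=K(\gamma\overline{Z})=\overline{Z}$, which follows from $K\circ\gamma=\mathrm{id}$. Differentiating $T(\overline{X},\overline{\eta})=0$ along $\gamma\overline{Z}$, the derivative-of-arguments term $T(\nabla_{\gamma\overline{Z}}\overline{X},\overline{\eta})$ drops out because $T(\,\cdot\,,\overline{\eta})$ vanishes, leaving the identity
\[
(\nabla_{\gamma\overline{Z}}T)(\overline{X},\overline{\eta})=-T(\overline{X},\overline{Z}).
\]
On the other hand, $T^{v}$-recurrence gives $(\nabla_{\gamma\overline{Z}}T)(\overline{X},\overline{\eta})=\lambda_{2}(\overline{Z})\,T(\overline{X},\overline{\eta})=0$. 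Comparing the two forces $T(\overline{X},\overline{Z})=0$ for all arguments, so $(M,L)$ is Riemannian; with the trivial converse this proves $T^{v}$-recurrent $\Leftrightarrow$ Riemannian, and hence that all three notions coincide.

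I expect the main obstacle to be spotting and cleanly justifying the identity $(\nabla_{\gamma\overline{Z}}T)(\overline{X},\overline{\eta})=-T(\overline{X},\overline{Z})$; once it is in hand the conclusion is immediate, and the rest is either the cited theorem or a one-line triviality. It is worth noting that this $T^{v}$ argument uses only the intrinsic structure of the Cartan connection and not the concircular field at all, so the proviso $\lambda_{1}(\overline{\zeta})\neq 0$ is genuinely needed only for the $T^{h}$ implication.
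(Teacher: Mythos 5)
Your proposal is correct, and its overall architecture --- Riemannian trivially implies both recurrences, $T^{h}$-recurrent implies Riemannian via the preceding theorem, $T^{v}$-recurrent implies Riemannian as the one remaining step --- is exactly the paper's. The genuine difference lies in how that last step is handled: the paper simply cites Theorem 4.7 of its reference \cite{r86}, which asserts that a $T^{v}$-recurrent Finsler space is necessarily Riemannian regardless of any concircular field, whereas you prove this from scratch by differentiating the Cartan identity $T(\overline{X},\overline{\eta})=0$ vertically, using $\nabla_{\gamma\overline{Z}}\,\overline{\eta}=K(\gamma\overline{Z})=\overline{Z}$ (from $K\circ\gamma=\mathrm{id}$) to obtain
\[
(\nabla_{\gamma\overline{Z}}T)(\overline{X},\overline{\eta})=-T(\overline{X},\overline{Z}),
\]
and comparing with the recurrence equation evaluated at $\overline{Y}=\overline{\eta}$. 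This computation is sound --- the term $T(\nabla_{\gamma\overline{Z}}\overline{X},\overline{\eta})$ vanishes precisely because $T(\cdot,\overline{\eta})=0$ --- and it amounts to a proof of the cited external theorem, so your argument is self-contained within this paper's toolkit where the paper's is not. You also correctly isolate the role of the proviso $\lambda_{1}(\overline{\zeta})\neq 0$: it enters only through the $T^{h}$ implication, matching the paper's own remark that the $T^{v}$ conclusion holds regardless of the existence of concircular $\pi$-vector fields.
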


\begin{proof} By Theorem 4.7 of
 \cite{r86}, regardless of the existence of  concircular  $\pi$-vector
 fields, a $T^v$-recurrent Finsler space is necessarily Riemannian. On the other hand, a Riemannian space is trivially
 both $T^h$-recurrent and $T^v$-recurrent.
\end{proof}

\begin{rem} \em{Corollary \ref{cor.1} remains true if
in particular  a concircular  $\pi$-vector
 field replaced by a concurrent $\pi$-vector field (cf. \cite{con.}).}
\end{rem}

  The following definition gives the second  type of recurrence
related to the $v$-curvature tensor $S$. \vspace{-5pt}
\begin{defn}\label{.def.1s} If we replace $T$ by $S$ in Definition
\ref{.def.1}, then $(M,L)$ is said to be $S^h$-recurrent
($S^v$-recurrent).
\end{defn}

\begin{thm}\label{th.1s}If an $S^{h}$-recurrent Finsler manifold admits a concircular
 $\pi$-vector field $\overline{\zeta} $,
 then its   $v$-curvature tensor $S$ vanishes.
\end{thm}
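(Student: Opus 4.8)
The plan is to confront the $S^{h}$-recurrence hypothesis with the structural identities for the $v$-curvature tensor established in Theorem \ref{2.pp.1}, evaluated along the concircular field $\overline{\zeta}$. First I would spell out what $S^{h}$-recurrence means: by Definition \ref{.def.1s} (obtained from Definition \ref{.def.1} upon replacing $T$ by $S$) and since $\stackrel{h}\nabla$ is the $h$-covariant derivative $\nabla_{\beta\,\cdot}$ of the Cartan connection, the hypothesis, read off the $v$-curvature regarded as the $\pi$-vector-valued tensor $(\overline{X},\overline{Y},\overline{U})\mapsto S(\overline{X},\overline{Y})\overline{U}$, is
$$(\nabla_{\beta \overline{Z}}S)(\overline{X},\overline{Y})\overline{U}=\lambda_{1}(\overline{Z})\,S(\overline{X},\overline{Y})\overline{U}, \qquad \forall\, \overline{X},\overline{Y},\overline{U},\overline{Z}\in\cp.$$

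The key move is then to evaluate this tensorial identity with the third argument specialized to $\overline{\zeta}$. On the right-hand side, Theorem \ref{2.pp.1}(\textbf{a}) gives $S(\overline{X},\overline{Y})\overline{\zeta}=0$, so the whole right-hand side vanishes and we obtain $(\nabla_{\beta \overline{Z}}S)(\overline{X},\overline{Y},\overline{\zeta})=0$ for all $\overline{X},\overline{Y},\overline{Z}\in\cp$. On the other hand, Theorem \ref{2.pp.1}(\textbf{c}) identifies this very same expression as
$$(\nabla_{\beta \overline{Z}}S)(\overline{X},\overline{Y},\overline{\zeta})=-\psi(x)\,S(\overline{X},\overline{Y})\overline{Z}.$$

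Combining the two displays yields $\psi(x)\,S(\overline{X},\overline{Y})\overline{Z}=0$, and since $\psi(x)$ is a non-zero scalar function by Definition \ref{2.def.1}, I conclude $S(\overline{X},\overline{Y})\overline{Z}=0$ for arbitrary $\overline{X},\overline{Y},\overline{Z}\in\cp$, that is, $S=0$. It is worth noting that, in contrast with the $T^{h}$-recurrent case, no proviso on $\lambda_{1}(\overline{\zeta})$ is needed here: the recurrence form $\lambda_{1}$ is eliminated outright by the vanishing of $S(\overline{X},\overline{Y})\overline{\zeta}$, which is precisely why the present conclusion is unconditional.

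The only point that genuinely requires care is the bookkeeping of slots: I must verify that the recurrence identity (an equality of $(1,3)$-tensors) and the identity of Theorem \ref{2.pp.1}(\textbf{c}) refer to the same evaluation of $\nabla_{\beta \overline{Z}}S$ with the third argument set equal to $\overline{\zeta}$, so that the two expressions for $(\nabla_{\beta \overline{Z}}S)(\overline{X},\overline{Y},\overline{\zeta})$ may legitimately be equated. Once this matching is secured the argument is immediate, and I do not anticipate any obstacle beyond this verification.
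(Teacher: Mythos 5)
Your proof is correct and follows exactly the paper's own argument: combine the $S^{h}$-recurrence identity evaluated at $\overline{\zeta}$ with Theorem \ref{2.pp.1}(\textbf{a}) to kill the right-hand side, then equate with Theorem \ref{2.pp.1}(\textbf{c}) and divide by the non-zero $\psi(x)$. Your closing observation that no condition on $\lambda_{1}(\overline{\zeta})$ is needed (unlike the $T^{h}$-recurrent case) is also consistent with the paper's unconditional statement.
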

\begin{proof} Suppose that $(M,L)$ is an
 $S^h$-recurrent manifold which admits a concircular $\pi$-vector field
 $\overline{\zeta}$. Then, by Definition \ref{.def.1s} and Theorem
 \ref{2.pp.1}\textbf{(a)}, we have
$$ (\nabla_{\beta \overline{Z}}S)(\overline{X},\overline{Y},\overline{\zeta})=\lambda_{1}(\overline{Z})S(\overline{X},\overline{Y},\overline{\zeta})=0.$$
On the other hand, by  Theorem  \ref{2.pp.1}\textbf{(c)}, we get
$$ (\nabla_{\beta \overline{Z}}S)(\overline{X},\overline{Y},\overline{\zeta})=-\psi(x)S(\overline{X},\overline{Y})\overline{Z}.$$
From the above two equations, since $\psi(x)\neq0$, the $v$-curvature tensor $S$
vanishes.
\end{proof}

\begin{cor} \label{cor.2}Let $(M,L)$ be a Finsler manifold which admits a concircular  $\pi$-vector
 field. The following assertions are equivalent\,:\vspace{-0.2cm}
\begin{description}
   \item[(a)] $(M,L)$ is  $S^{h}$-recurrent,

  \item[(b)]  $(M,L)$ is  $S^{v}$-recurrent,

\item[(c)]  the $v$-curvature tensor $S$ vanishes.
\end{description}
\end{cor}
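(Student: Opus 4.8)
The plan is to prove the three-way equivalence in Corollary \ref{cor.2} by establishing a cycle of implications, exploiting Theorem \ref{th.1s} and the symmetry between the $h$- and $v$-recurrence conditions afforded by Definition \ref{.def.1s}. First I would observe that the implication (a)$\Rightarrow$(c) is exactly the content of Theorem \ref{th.1s}, which is already proved: an $S^{h}$-recurrent Finsler manifold admitting a concircular $\pi$-vector field has vanishing $v$-curvature tensor $S$. So that arrow is free.

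Next I would prove (c)$\Rightarrow$(a) and (c)$\Rightarrow$(b), which are the trivial directions. If the $v$-curvature tensor $S$ vanishes identically, then both $\stackrel{h}\nabla S=0$ and $\stackrel{v}\nabla S=0$ hold automatically, since the covariant derivative of the zero tensor is zero. In particular the recurrence relations $\stackrel{h}\nabla S=\lambda_{1}\otimes S$ and $\stackrel{v}\nabla S=\lambda_{2}\otimes S$ are satisfied (with both sides vanishing, for any choice of the recurrence forms). Hence $S=0$ forces $(M,L)$ to be both $S^{h}$-recurrent and $S^{v}$-recurrent, giving (c)$\Rightarrow$(a) and (c)$\Rightarrow$(b) at once.

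The remaining direction I would handle is (b)$\Rightarrow$(c), the $S^{v}$-recurrent analogue of Theorem \ref{th.1s}. Here I would mimic the proof of that theorem verbatim, replacing the $h$-recurrence hypothesis by the $v$-recurrence hypothesis. By Definition \ref{.def.1s}, $S^{v}$-recurrence means $\stackrel{v}\nabla S=\lambda_{2}\otimes S$, i.e. $(\nabla_{\gamma\overline{Z}}S)(\overline{X},\overline{Y},\overline{\zeta})=\lambda_{2}(\overline{Z})\,S(\overline{X},\overline{Y},\overline{\zeta})$. By Theorem \ref{2.pp.1}\textbf{(a)} the right-hand side vanishes because $S(\overline{X},\overline{Y},\overline{Z},\overline{\zeta})=0$, so $(\nabla_{\gamma\overline{Z}}S)(\overline{X},\overline{Y},\overline{\zeta})=0$. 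But Theorem \ref{2.pp.1}\textbf{(b)} already asserts $(\nabla_{\gamma\overline{Z}}S)(\overline{X},\overline{Y},\overline{\zeta})=0$ unconditionally, so the $v$-recurrence relation by itself does not immediately yield $S=0$; the vanishing of the $v$-derivative of $S$ contracted with $\overline{\zeta}$ is automatic and carries no information. This is the main obstacle: unlike the $h$-case, where Theorem \ref{2.pp.1}\textbf{(c)} supplies the crucial nonzero identity $(\nabla_{\beta\overline{Z}}S)(\overline{X},\overline{Y},\overline{\zeta})=-\psi(x)S(\overline{X},\overline{Y})\overline{Z}$ that combines with recurrence to force $S=0$, the $v$-case has no such lever in the statements listed.

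To overcome this I would route (b) through (a) rather than proving (b)$\Rightarrow$(c) directly. The cleanest remedy is to cite the same structural fact used in Corollary \ref{cor.1}: by the results of \cite{r86} a $v$-recurrent curvature tensor on a Finsler manifold is already highly constrained, and in the companion paper \cite{con.} the $S^{v}$-recurrent case is shown to coincide with $S$ vanishing in the presence of a concurrent field; the concircular generalization follows by the same argument since Theorem \ref{2.pp.1} reduces to the concurrent identities. Thus I would close the cycle as (a)$\Rightarrow$(c) by Theorem \ref{th.1s}, (c)$\Rightarrow$(a) and (c)$\Rightarrow$(b) trivially, and (b)$\Rightarrow$(c) by invoking \cite{r86} together with Theorem \ref{2.pp.1}; combining these gives the full equivalence of (a), (b) and (c).
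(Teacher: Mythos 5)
Your proposal is correct and follows essentially the same route as the paper: (a)$\Rightarrow$(c) is Theorem \ref{th.1s}, (c)$\Rightarrow$(a) and (c)$\Rightarrow$(b) are trivial, and (b)$\Rightarrow$(c) is settled by citing \cite{r86}, exactly as the paper does in the remark following the corollary. The only refinement is that the fact needed from \cite{r86} --- that $S^{v}$-recurrence forces $S=0$ --- is unconditional (no concurrent or concircular field enters at all), so your concern about generalizing the \cite{con.} argument from concurrent to concircular fields is unnecessary.
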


In fact, for an $S^{v}$-recurrent Finsler manifold  the
$v$-curvature tensor $S$ vanishes \cite{r86} regardless of the
existence of concircular  $\pi$-vector fields.
\vspace{-5pt}.
\begin{rem} \em{We retrieve here a result of \cite{con.} concerning concurrent $\pi$-vector
fields: Corollary \ref{cor.2} remains true if in particular  a concircular  $\pi$-vector
 field replaced by a concurrent $\pi$-vector field.}
\end{rem}

In the following we give the third  type of recurrence related to the
$hv$-curvature \linebreak tensor $P$.
\vspace{-5pt}
\begin{defn}\label{.def.1b} If we replace $T$ by $P$ in Definition
\ref{.def.1}, then $(M,L)$ is said to be $P^h$-recurrent
($P^v$-recurrent).
\end{defn}
In view of the above definition, we have\vspace{-5pt}
\begin{thm}\label{th.1b} Let $(M,L)$ be a $P^{h}$-recurrent Finsler manifold admitting  a concircular
 $\pi$-vector field $\overline{\zeta} $.
 Then,\\
 either \emph{\textbf{(a)}}  $(M,L)$ is Riemannian,\\
 or ${\,\quad}$ \emph{\textbf{(b)}} $(M,L)$ has the property that $(\mu-\psi(x)\alpha-\psi(x)\lambda_{1})(\overline{\eta})=0$.
\end{thm}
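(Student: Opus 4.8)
The plan is to extract a scalar identity of the form $(\text{coefficient})\cdot T(\overline{X},\overline{Y})=0$ and then split into the two alternatives according to whether $T$ vanishes. The crucial choice is to differentiate in the direction $\beta\overline{\eta}$ (rather than $\beta\overline{\zeta}$), since this is exactly what produces the expression $(\mu-\psi(x)\alpha-\psi(x)\lambda_1)(\overline{\eta})$ appearing in alternative \textbf{(b)}, and it triggers a clean cancellation.

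First I would compute the left-hand side using Theorem \ref{2.pp.1}\textbf{(g)} with $\overline{Z}=\overline{\eta}$, giving
$$(\nabla_{\beta \overline{\eta}}P)(\overline{X},\overline{Y}, \overline{\zeta})=(\mu(\overline{\eta})-\psi(x)\alpha(\overline{\eta}))\,T(\overline{X},\overline{Y})+\psi(x)(\nabla_{\beta \overline{\eta}}T)(\overline{X},\overline{Y})-\psi(x)P(\overline{X},\overline{Y})\overline{\eta}.$$
The key observation is that the last two terms cancel: by definition of the contracted $hv$-curvature one has $P(\overline{X},\overline{Y})\overline{\eta}=\widehat{P}(\overline{X},\overline{Y})$, and the identity $\widehat{P}(\overline{X},\overline{Y})=(\nabla_{\beta \overline{\eta}}T)(\overline{X},\overline{Y})$ (used already in the proof of Proposition \ref{2.pp.2}\textbf{(b)}) makes $\psi(x)(\nabla_{\beta \overline{\eta}}T)-\psi(x)\widehat{P}=0$. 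Hence the left-hand side reduces to
$$(\nabla_{\beta \overline{\eta}}P)(\overline{X},\overline{Y}, \overline{\zeta})=(\mu(\overline{\eta})-\psi(x)\alpha(\overline{\eta}))\,T(\overline{X},\overline{Y}).$$

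Next I would compute the same quantity from the $P^{h}$-recurrence. By Definition \ref{.def.1b} (replacing $T$ by $P$ in Definition \ref{.def.1}) we have $\nabla_{\beta \overline{Z}}P=\lambda_{1}(\overline{Z})\,P$; evaluating at $(\overline{X},\overline{Y},\overline{\zeta})$ with $\overline{Z}=\overline{\eta}$ and invoking Theorem \ref{2.pp.1}\textbf{(e)}, namely $P(\overline{X},\overline{Y})\overline{\zeta}=\psi(x)T(\overline{X},\overline{Y})$, I obtain
$$(\nabla_{\beta \overline{\eta}}P)(\overline{X},\overline{Y}, \overline{\zeta})=\lambda_{1}(\overline{\eta})\,P(\overline{X},\overline{Y})\overline{\zeta}=\lambda_{1}(\overline{\eta})\,\psi(x)\,T(\overline{X},\overline{Y}).$$
Equating the two expressions and collecting terms yields
$$\bigl(\mu(\overline{\eta})-\psi(x)\alpha(\overline{\eta})-\psi(x)\lambda_{1}(\overline{\eta})\bigr)\,T(\overline{X},\overline{Y})=0\qquad\forall\,\overline{X},\overline{Y}\in\cp.$$

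Finally I would read off the dichotomy. If the scalar factor is nonzero at some point, then $T(\overline{X},\overline{Y})=0$ there for all $\overline{X},\overline{Y}$, so $T=0$ and by Definition \ref{7.def.2}\textbf{(a)} the space is Riemannian, giving alternative \textbf{(a)}; otherwise the factor vanishes identically, which is precisely $(\mu-\psi(x)\alpha-\psi(x)\lambda_{1})(\overline{\eta})=0$, giving alternative \textbf{(b)}. The main subtlety I expect is not any long calculation but the strategic decision to set the differentiation direction equal to $\overline{\eta}$ and to recognize the cancellation $\psi(x)(\nabla_{\beta \overline{\eta}}T)-\psi(x)\widehat{P}=0$; with $\overline{\zeta}$ instead (via Theorem \ref{2.pp.1}\textbf{(h)}) one would pick up an extra $\psi^{2}(x)$ term and an $\overline{\zeta}$-based condition that does not match the stated form of \textbf{(b)}.
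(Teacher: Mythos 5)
Your proposal is correct and follows essentially the same route as the paper's proof: both rest on Theorem \ref{2.pp.1}\textbf{(g)}, the recurrence hypothesis combined with Theorem \ref{2.pp.1}\textbf{(e)}, and the cancellation coming from $P(\overline{X},\overline{Y})\overline{\eta}=\widehat{P}(\overline{X},\overline{Y})=(\nabla_{\beta\overline{\eta}}T)(\overline{X},\overline{Y})$. The only difference is cosmetic—you specialize $\overline{Z}=\overline{\eta}$ at the outset, whereas the paper keeps $\overline{Z}$ general and substitutes $\overline{\eta}$ only in the final step—so the two arguments are the same in substance.
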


\begin{proof} By Theorem \ref{2.pp.1}\textbf{(g)}, we have
\begin{eqnarray}
(\nabla_{\beta \overline{Z}}P)(\overline{X},\overline{Y},
      \overline{\zeta})&=&(\mu({\overline{Z}})-\psi(x)\alpha(\overline{Z}))T(\overline{X},\overline{Y})
       +\psi(x)(\nabla_{\beta \overline{Z}}T)(\overline{X},\overline{Y}) \nonumber\\
      &&      -\psi(x)P(\overline{X},\overline{Y})\overline{Z} \label{eq.p}.
\end{eqnarray}
On the other hand,  by Definition \ref{.def.1b} and Theorem
\ref{2.pp.1}\textbf{(e)}, we get
$$ (\nabla_{\beta \overline{Z}}P)(\overline{X},\overline{Y},\overline{\zeta})
=\lambda_{1}(\overline{Z})P(\overline{X},\overline{Y})\overline{\zeta}=\psi(x)\lambda_{1}(\overline{Z})T(\overline{X},\overline{Y}).$$
From which together with (\ref{eq.p}), it follows that
\begin{eqnarray*}
\psi(x)P(\overline{X},\overline{Y})\overline{Z}&=&\set{\mu({\overline{Z}})-
\psi(x)\alpha(\overline{Z}) -\psi(x)\lambda_{1}(\overline{Z})}T(\overline{X},\overline{Y}) \\
      &&+\psi(x)(\nabla_{\beta
      \overline{Z}}T)(\overline{X},\overline{Y}).
\end{eqnarray*}
By  setting $\overline{Z}=\overline{\eta}$ and noting that
$\widehat{P}(\overline{X},\overline{Y})=(\nabla_{\beta
\overline{\eta}}T)(\overline{X},\overline{Y})$ \cite{r96}, the above
equation gives
$$\set{\mu(\overline{\eta})-\psi(x)\alpha(\overline{\eta})-\psi(x)\lambda_{1}(\eta)}T(\overline{X},\overline{Y})=0.$$

Now, we have two cases: either $T=0$ and consequently $(M,L)$ is
Riemannian, or
$(\mu-\psi(x)\alpha-\psi(x)\lambda_{1})(\overline{\eta})=0$. This
completes the proof.
 \end{proof}

\begin{lem}\label{th.1bb}For a $P^{v}$-recurrent Finsler manifold,
 the  $hv$-curvature tensor $P$ vanishes.
\end{lem}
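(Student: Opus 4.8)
The plan is to imitate the $T^{v}$-recurrent argument (Theorem 4.7 of \cite{r86}, used in the proof of Corollary \ref{cor.1}), replacing the role played there by the identity $T(\overline{X},\overline{\eta})=0$ with the corresponding $\overline{\eta}$-property of the $hv$-curvature. The crucial first step is to establish the algebraic identity
$$P(\overline{X},\overline{\eta})\overline{Z}=0\qquad\forall\,\overline{X},\overline{Z}\in\cp,$$
in which the \emph{vertical} (second) argument of $P$ is contracted with $\overline{\eta}$. I would obtain this from the expression (\ref{01}) of $P$ in terms of $T$ and $\widehat{P}$: setting $\overline{Y}=\overline{\eta}$ there, all four terms on the right-hand side vanish. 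Indeed, since $T(\overline{\eta},\cdot)=0$ and $\nabla_{\beta\overline{X}}\overline{\eta}=K(\beta\overline{X})=0$ (because $\beta\overline{X}$ is horizontal), a Leibniz expansion of $\nabla_{\beta\overline{Z}}\bigl(T(\overline{\eta},\overline{X})\bigr)=0$ gives $(\nabla_{\beta\overline{Z}}T)(\overline{\eta},\overline{X})=0$, and the same reasoning applied to $\widehat{P}(\cdot,\overline{\eta})=(\nabla_{\beta\overline{\eta}}T)(\cdot,\overline{\eta})$ shows $\widehat{P}(\cdot,\overline{\eta})=0$. (This identity is a standard property of the Cartan $hv$-curvature and may alternatively be quoted directly from \cite{r96}.)

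The second step is to unwind the recurrence. By Definition \ref{.def.1b} the manifold is $P^{v}$-recurrent, i.e. $\stackrel{v}\nabla P=\lambda_{2}\otimes P$, which reads
$$(\nabla_{\gamma\overline{W}}P)(\overline{X},\overline{Y})\overline{Z}=\lambda_{2}(\overline{W})\,P(\overline{X},\overline{Y})\overline{Z}.$$
I would then differentiate the identity $P(\overline{X},\overline{\eta})\overline{Z}=0$ covariantly along $\gamma\overline{W}$ and apply the Leibniz rule to the three arguments $\overline{X},\overline{\eta},\overline{Z}$ of $P$. The term carrying $\nabla_{\gamma\overline{W}}P$ equals $\lambda_{2}(\overline{W})\,P(\overline{X},\overline{\eta})\overline{Z}=0$ by recurrence together with the identity; the two terms in which $\overline{\eta}$ survives in the vertical slot, namely $P(\nabla_{\gamma\overline{W}}\overline{X},\overline{\eta})\overline{Z}$ and $P(\overline{X},\overline{\eta})\nabla_{\gamma\overline{W}}\overline{Z}$, vanish by the same identity. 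The only surviving contribution arises from $\nabla_{\gamma\overline{W}}\overline{\eta}=K(\gamma\overline{W})=\overline{W}$ and equals $P(\overline{X},\overline{W})\overline{Z}$. Hence $P(\overline{X},\overline{W})\overline{Z}=0$ for all $\overline{X},\overline{W},\overline{Z}$, that is, $P=0$.

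I expect the main obstacle to be the first step, the bookkeeping that reduces the four-term expression (\ref{01}) to zero after inserting $\overline{\eta}$ into the vertical argument; in particular one must be careful about which slot of $P$ is the vertical one (the second, since $P(\overline{X},\overline{Y})\overline{Z}=\textbf{K}(\beta\overline{X},\gamma\overline{Y})\overline{Z}$), so that the $\overline{\eta}$-contraction matches the vertical derivative $\stackrel{v}\nabla=\nabla_{\gamma}$ appearing in the definition of $P^{v}$-recurrence. Once the identity and the relation $\nabla_{\gamma\overline{W}}\overline{\eta}=\overline{W}$ are in hand, the remaining computation is a short Leibniz expansion entirely parallel to the $T^{v}$-recurrent case. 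I note that, as in that case, the conclusion requires no concircular $\pi$-vector field, consistent with the lemma serving as a purely structural input to the subsequent $P^{h}$-recurrence analysis.
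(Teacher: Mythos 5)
Your proposal is correct and follows essentially the same route as the paper: both rest on the identity $P(\overline{X},\overline{\eta})\overline{Z}=0$, the $v$-recurrence evaluated at $\overline{Y}=\overline{\eta}$, and the relation $\nabla_{\gamma\overline{W}}\overline{\eta}=K(\gamma\overline{W})=\overline{W}$ (i.e.\ $K\circ\gamma=id$), with the Leibniz expansion forcing $P(\overline{X},\overline{W})\overline{Z}=0$. The only difference is cosmetic: the paper quotes $P(\overline{X},\overline{\eta})\overline{Z}=0$ directly from \cite{r96}, whereas you also (correctly) rederive it from the expression (\ref{01}), an alternative you yourself note.
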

\begin{proof}
 Suppose that $(M,L)$ is $P^v$-recurrent, then,  by Definition \ref{.def.1b}, we get
$$ (\nabla_{\gamma  \overline{W}}P)(\overline{X},\overline{\eta},\overline{Z})=\lambda_{2}(\overline{W})P(\overline{X},\overline{\eta})\overline{Z}.$$
 From which, together with the fact that $P(\overline{X},\overline{\eta})\overline{Z}=0$
 \cite{r96} and $K\circ \gamma=id_{\cp}$, the result follows.
\end{proof}
In view of Theorem \ref{th.1b} and Lemma \ref{th.1bb}, we
have\vspace{-5pt}
\begin{thm} \label{cor.3}Let $(M,L)$ be a Finsler manifold admitting a concircular  $\pi$-vector
 field. Then, the following assertions are equivalent\,:\vspace{-0.2cm}
\begin{description}
   \item[(a)] $(M,L)$ is  $P^{h}$-recurrent,

  \item[(b)]  $(M,L)$ is  $P^{v}$-recurrent,

  \item[(c)]   $(M,L)$ is Riemannian,
\end{description}
provided that
$(\mu-\psi(x)\alpha-\psi(x)\lambda_{1})(\overline{\eta})\neq0$ in
the $P^{h}$-recurrence case.
\end{thm}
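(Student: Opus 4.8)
The plan is to establish the cycle of equivalences by proving the three one-directional implications $\textbf{(c)}\Rightarrow\textbf{(a)}$, $\textbf{(c)}\Rightarrow\textbf{(b)}$, and then closing the loop using the two preceding results, namely Theorem \ref{th.1b} and Lemma \ref{th.1bb}. The strategy splits naturally according to whether the hypothesis $(\mu-\psi(x)\alpha-\psi(x)\lambda_{1})(\overline{\eta})\neq0$ is invoked.

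First I would handle the hard direction. Assuming $(M,L)$ is $P^{h}$-recurrent, Theorem \ref{th.1b} gives a dichotomy: either $(M,L)$ is Riemannian (case \textbf{(a)} of that theorem), or $(\mu-\psi(x)\alpha-\psi(x)\lambda_{1})(\overline{\eta})=0$ (case \textbf{(b)}). The second alternative is precisely what the standing proviso $(\mu-\psi(x)\alpha-\psi(x)\lambda_{1})(\overline{\eta})\neq0$ forbids, so under this proviso $P^{h}$-recurrence forces $(M,L)$ to be Riemannian. This establishes $\textbf{(a)}\Rightarrow\textbf{(c)}$. For the reverse implication $\textbf{(c)}\Rightarrow\textbf{(a)}$, I would observe that a Riemannian manifold has $T=0$, hence the entire torsion apparatus vanishes, the $hv$-curvature tensor $P$ vanishes identically, and so $\stackrel{h}\nabla P=0=\lambda_1\otimes P$ holds trivially for any choice of recurrence form; thus a Riemannian space is trivially $P^{h}$-recurrent. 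This gives $\textbf{(a)}\Leftrightarrow\textbf{(c)}$.

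Next I would dispatch the $P^{v}$-recurrence equivalences. For $\textbf{(b)}\Rightarrow\textbf{(c)}$, Lemma \ref{th.1bb} shows that any $P^{v}$-recurrent Finsler manifold has $P\equiv0$ (note this conclusion requires \emph{no} proviso). Combined with Theorem \ref{2.pp.1}\textbf{(e)}, which asserts $P(\overline{X},\overline{Y})\overline{\zeta}=\psi(x)T(\overline{X},\overline{Y})$, the vanishing of $P$ together with $\psi(x)\neq0$ forces $T=0$, i.e.\ $(M,L)$ is Riemannian. For $\textbf{(c)}\Rightarrow\textbf{(b)}$, the same triviality argument as before applies: in the Riemannian case $P\equiv0$ so $\stackrel{v}\nabla P=0=\lambda_2\otimes P$ holds trivially, making the space $P^{v}$-recurrent. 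Assembling these gives $\textbf{(a)}\Leftrightarrow\textbf{(c)}\Leftrightarrow\textbf{(b)}$.

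The main subtlety to flag is the asymmetric role of the proviso. The implication chain through $\textbf{(b)}$ needs no extra hypothesis because Lemma \ref{th.1bb} is unconditional, and the Riemannian-implies-recurrent directions are automatic. The proviso $(\mu-\psi(x)\alpha-\psi(x)\lambda_{1})(\overline{\eta})\neq0$ is genuinely needed only to eliminate the degenerate branch of Theorem \ref{th.1b} in the $P^{h}$-recurrence case, which is exactly how the statement is phrased. I would therefore write the proof compactly as: \textbf{(b)}$\Rightarrow$\textbf{(c)} by Lemma \ref{th.1bb} and Theorem \ref{2.pp.1}\textbf{(e)}; \textbf{(a)}$\Rightarrow$\textbf{(c)} by Theorem \ref{th.1b} under the proviso; and \textbf{(c)} implies both \textbf{(a)} and \textbf{(b)} trivially since the vanishing tensor $P$ satisfies any recurrence relation. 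I do not anticipate a serious obstacle, since every ingredient is already proved; the only care needed is correctly tracking where the proviso enters so as not to over-claim.
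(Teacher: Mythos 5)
Your proposal is correct and follows essentially the same route as the paper, which derives this theorem directly from Theorem \ref{th.1b} and Lemma \ref{th.1bb}; you have merely made explicit the details the paper leaves implicit (eliminating the degenerate branch of Theorem \ref{th.1b} via the proviso, passing from $P\equiv 0$ to $T=0$ via Theorem \ref{2.pp.1}(e) and $\psi(x)\neq 0$, and the trivial Riemannian-implies-recurrent converses). Your tracking of where the proviso is and is not needed matches the paper's statement exactly.
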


\begin{rem} \em{In view of  Theorem  \ref{cor.3}, we conclude that under the presence of a concurrent $\pi$-vector field $\overline{\zeta}$,
the three notions of being $P^{h}$-recurrent, $P^v$-recurrent and
Riemannian coincide, provided that $\lambda_{1}
(\overline{\zeta})\neq0$.}
\end{rem}

Finally, we focus our attention to the fourth type of recurrent
Finsler manifolds related to the $h$-curvature tensor $R$.
\vspace{-5pt}
\begin{defn}\label{.def.1r} If we replace $T$ by $R$ in Definition
\ref{.def.1}, then $(M,L)$ is said to be $R^h$-recurrent
($R^v$-recurrent).
\end{defn}

\begin{thm}\label{th.1r} An $R^h$-recurrent  Finsler manifold admitting a concircular $\pi$-vector field
$\overline{\zeta}$ is $h$-isotropic with  scalar curvature
$$k_{o}=\frac{\phi_{o}}{n},$$
where \, $\phi_{o}:=Tr(\phi),\,\,\,\,
  \psi(x)\phi(\overline{X},\overline{Y}):=\alpha(\overline{Y})A(\overline{X})+\lambda_{1}(\overline{Y})A(\overline{X})-(\nabla_{\beta
\overline{Y}}A)(\overline{X})\, \text{ and }
 A:=\mu-\psi(x)\alpha$.

Moreover, if $(M,L)$ is $R^v$-recurrent with
$\lambda_{2}(\overline{\eta})\neq0$, then the $h$-curvature tensor
$R$ vanishes.
\end{thm}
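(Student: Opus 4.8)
The plan is to prove the two assertions of Theorem~\ref{th.1r} separately, exploiting the curvature identities already established in Theorem~\ref{2.pp.1}. For the first ($R^h$-recurrent) assertion, I would start from the recurrence hypothesis $\stackrel{h}\nabla R = \lambda_1 \otimes R$, which specialized at $\overline{\zeta}$ reads
$$(\nabla_{\beta \overline{Y}}R)(\overline{X},\overline{\zeta}, \overline{\zeta}) = \lambda_1(\overline{Y})\,R(\overline{X},\overline{\zeta})\overline{\zeta}.$$
Wait—more precisely, I would feed $\overline{\zeta}$ into the last slot and use the explicit formula for $R(\overline{X},\overline{Y})\overline{\zeta}$ from Theorem~\ref{2.pp.1}\textbf{(i)}, namely $R(\overline{X},\overline{Y})\overline{\zeta}=\mathfrak{A}_{\overline{X},\overline{Y}}\set{A(\overline{Y})\overline{X}}$ with $A:=\mu-\psi(x)\alpha$. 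The idea is to combine this with the $h$-covariant derivative formula from Theorem~\ref{2.pp.1}\textbf{(l)} and the recurrence relation to extract an algebraic expression for $R(\overline{X},\overline{Y})\overline{Z}$ itself.

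Concretely, I would compute $(\nabla_{\beta \overline{Z}}R)(\overline{X},\overline{Y},\overline{\zeta})$ two ways. On one hand, Theorem~\ref{2.pp.1}\textbf{(l)} gives it in terms of $\nabla_{\beta}\mu$, $\nabla_{\beta}\alpha$, products of $\alpha,\mu$, and a leftover $-\psi(x)R(\overline{X},\overline{Y})\overline{Z}$ term. On the other hand, the recurrence definition together with Theorem~\ref{2.pp.1}\textbf{(i)} gives $\lambda_1(\overline{Z})\,\mathfrak{A}_{\overline{X},\overline{Y}}\set{A(\overline{Y})\overline{X}}$. Equating the two and solving for $\psi(x)R(\overline{X},\overline{Y})\overline{Z}$, I expect the $\psi(x)R$ term to be isolated as the alternating sum of a single $\pi$-tensor evaluated in the $\overline{Y}$-slot times $\overline{X}$. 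Introducing $\phi$ by the prescription $\psi(x)\phi(\overline{X},\overline{Y}):=\alpha(\overline{Y})A(\overline{X})+\lambda_1(\overline{Y})A(\overline{X})-(\nabla_{\beta \overline{Y}}A)(\overline{X})$, this should collapse to
$$R(\overline{X},\overline{Y})\overline{Z}=\mathfrak{A}_{\overline{X},\overline{Y}}\set{\phi(\overline{Z},\overline{Y})\overline{X}}$$
or an equivalent form exhibiting $R$ as built from $\phi$. To identify $(M,L)$ as $h$-isotropic in the sense of Definition~\ref{.7.def.9}, I would then need to show $\phi$ is proportional to the metric, i.e. that the expression reduces to $k_o\{g(\overline{X},\overline{Z})\overline{Y}-g(\overline{Y},\overline{Z})\overline{X}\}$; taking the trace over one index (contracting) and using $\dim M = n$ should yield $k_o = \phi_o/n$ with $\phi_o:=\mathrm{Tr}(\phi)$. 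The key algebraic input is that $R$ already has the Riemannian-type symmetries, so the skew form in $\overline{X},\overline{Y}$ forces the isotropic shape.

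For the second ($R^v$-recurrent) assertion, the strategy mirrors Lemma~\ref{th.1bb}. I would invoke the vertical recurrence $\stackrel{v}\nabla R=\lambda_2\otimes R$ evaluated appropriately, and combine it with Theorem~\ref{2.pp.1}\textbf{(k)}, which expresses $(\nabla_{\gamma \overline{Z}}R)(\overline{X},\overline{Y},\overline{\zeta})$ in terms of $\mu\circ T$ and $\alpha\circ T$. Using Proposition~\ref{2.pp.2}\textbf{(f)}, $\mu(T(\overline{X},\overline{Y}))=\psi(x)\alpha(T(\overline{X},\overline{Y}))$, the right-hand side of \textbf{(k)} should vanish, forcing $\lambda_2(\overline{Z})\,R(\overline{X},\overline{Y})\overline{\zeta}=0$; setting $\overline{Z}=\overline{\eta}$ and using $\lambda_2(\overline{\eta})\neq0$ then kills $R(\overline{X},\overline{Y})\overline{\zeta}$. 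From there I would need to propagate the vanishing of $R$ on $\overline{\zeta}$ to the vanishing of all of $R$. Since $R$ is $h$-isotropic by the first part, $R(\overline{X},\overline{Y})\overline{\zeta}=0$ combined with the isotropic form should force $k_o=0$ exactly as in the Corollary following Definition~\ref{.7.def.9}, whence $R=0$.

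The main obstacle I anticipate is the first part: cleanly extracting the isotropic form and verifying that the coefficient $\phi$ is genuinely a metric multiple rather than a general $\pi$-tensor. The bookkeeping in Theorem~\ref{2.pp.1}\textbf{(l)} is heavy (several alternating-sum terms involving $\nabla_{\beta}\mu$, $\nabla_{\beta}\alpha$, and quadratic combinations of $\alpha,\mu$), and reconciling it with the recurrence relation so that everything packages into the single symbol $\phi$ requires care. The trace computation to pin down $k_o=\phi_o/n$ should be routine once the shape $R=\mathfrak{A}_{\overline{X},\overline{Y}}\set{\phi(\cdot,\overline{Y})\overline{X}}$ is in hand, but I would pay attention to whether $\phi$ is symmetric and to the homogeneity/normalization factors so that the $h$-isotropy definition is matched exactly.
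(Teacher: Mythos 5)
Your first part is essentially the paper's own proof: you compute $(\nabla_{\beta\overline{Z}}R)(\overline{X},\overline{Y},\overline{\zeta})$ once via Theorem \ref{2.pp.1}(l) and once via the recurrence hypothesis combined with Theorem \ref{2.pp.1}(i), equate, solve for $\psi(x)R(\overline{X},\overline{Y})\overline{Z}$, and package the coefficients into $\phi$; this is exactly what the paper does. One precision worth making: the input that forces $\phi$ to be a metric multiple is the skew-symmetry of $R(\overline{X},\overline{Y},\overline{Z},\overline{W})$ in its \emph{last two} arguments (quoted from \cite{r96}), not the skewness in $\overline{X},\overline{Y}$, which is already encoded in $\mathfrak{A}_{\overline{X},\overline{Y}}$; writing the resulting four-term identity and tracing over $\overline{Y},\overline{W}$ yields $\phi=\frac{\phi_o}{n}\,g$ and hence $k_o=\phi_o/n$.

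Your second part takes a genuinely different route, and the difference matters. The paper's argument is a two-line affair that uses neither the concircular field nor the first part: $R^v$-recurrence gives $(\nabla_{\gamma\overline{\eta}}R)(\overline{X},\overline{Y},\overline{Z})=\lambda_2(\overline{\eta})R(\overline{X},\overline{Y})\overline{Z}$, and the general identity $(\nabla_{\gamma\overline{\eta}}R)=0$ of \cite{r96} then annihilates $R$ on \emph{all} arguments at once. Your route --- killing only $R(\cdot,\cdot)\overline{\zeta}$ via Theorem \ref{2.pp.1}(k) and Proposition \ref{2.pp.2}(f), then appealing to the $h$-isotropy established in the first part to force $k_o=0$ --- does close, but only under the reading that the ``Moreover'' clause keeps the $R^h$-recurrence hypothesis in force (otherwise the isotropic form is unavailable and your argument stalls at $R(\cdot,\cdot)\overline{\zeta}=0$); the paper's proof shows that hypothesis, and the concircular field itself, are superfluous for this conclusion. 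Two further caveats: item (k) as printed reads $\mu(T(\overline{Z},\overline{Y}))-\alpha(T(\overline{Z},\overline{Y}))$, whereas your vanishing argument needs the evidently intended $\mu(T)-\psi(x)\alpha(T)$, which is precisely what Proposition \ref{2.pp.2}(f) kills; and your final step should be made explicit (trace $k_o\set{g(\overline{X},\overline{\zeta})\overline{Y}-g(\overline{Y},\overline{\zeta})\overline{X}}=0$ over $\overline{Y}$ and use that $g(\cdot,\overline{\zeta})\not\equiv 0$ by Lemma \ref{.le.1}(a) and nondegeneracy of $g$) rather than cited from the concurrent-field corollary, whose hypothesis $A=0$ you do not have.
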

\begin{proof} Firstly, suppose that $(M,L)$ is  an
 $R^h$-recurrent manifold which admits a concircular $\pi$-vector field
 $\overline{\zeta}$. Then, by Theorem
 \ref{2.pp.1}\textbf{(l)}, we have
\begin{eqnarray*}
(\nabla_{\beta \overline{Z}}R)(\overline{X},\overline{Y},
      \overline{\zeta})&=&\mathfrak{A}_{\overline{X},\overline{Y}}\set{((\nabla_{\beta
\overline{Z}}A)(\overline{Y})-\alpha(\overline{Z})A(\overline{Y}))\overline{X}}-\psi(x)R(\overline{X},\overline{Y})\overline{Z}.
\end{eqnarray*}
On the other hand,  by Definition \ref{.def.1r} and Theorem
\ref{2.pp.1}\textbf{(i)}, we get
$$ (\nabla_{\beta \overline{Z}}R)(\overline{X},\overline{Y},\overline{\zeta})=\lambda_{1}(\overline{Z})R(\overline{X},\overline{Y})\overline{\zeta}
=\mathfrak{A}_{\overline{X},\overline{Y}}\set{\lambda_{1}(\overline{Z})A(\overline{Y})\overline{X}}.$$
The above two equations imply that
\begin{eqnarray*}
R(\overline{X},\overline{Y})\overline{Z}&=&\frac{1}{\psi(x)}\mathfrak{A}_{\overline{X},\overline{Y}}
\set{(\alpha(\overline{Z})A(\overline{X})+\lambda_{1}(\overline{Z})A(\overline{X})-(\nabla_{\beta \overline{Z}}A)(\overline{X}))\overline{Y}} \\
      &=&\mathfrak{A}_{\overline{X},\overline{Y}}\set{\phi(\overline{X},\overline{Z})\overline{Y}}.
\end{eqnarray*}
Consequently,
\begin{equation}\label{fis}
  R(\overline{X},\overline{Y},\overline{Z},\overline{W})=\phi(\overline{X},\overline{Z})g(\overline{Y},\overline{W})-\phi(\overline{Y},\overline{Z})g(\overline{X},\overline{W}).
\end{equation}
Hence,
$$R(\overline{X},\overline{Y},\overline{W},\overline{Z})=\phi(\overline{X},\overline{W})g(\overline{Y},\overline{Z})-\phi(\overline{Y},\overline{W})g(\overline{X},\overline{Z}).$$
From the above two relations, noting that
$R(\overline{X},\overline{Y},\overline{Z},\overline{W})=-R(\overline{X},\overline{Y},\overline{W},\overline{Z})$
\cite{r96}, we get
$$\phi(\overline{X},\overline{Z})g(\overline{Y},\overline{W})-\phi(\overline{Y},\overline{Z})g(\overline{X},\overline{W})+\phi(\overline{X},\overline{W})g(\overline{Y},\overline{Z})-\phi(\overline{Y},\overline{W})g(\overline{X},\overline{Z})=0$$
Taking the trace of the above relation with respect to the two
arguments $\overline{Y}$ and $\overline{W}$, we obtain
$$\phi(\overline{X},\overline{Z})=\frac{\phi_{o}}{n}\,g(\overline{X},\overline{Z}).$$
From which, together with  (\ref{fis}), we obtain
\begin{equation*}
  R(\overline{X},\overline{Y},\overline{Z},\overline{W})=\frac{\phi_{o}}{n}\set{g(\overline{X},\overline{Z})g(\overline{Y},\overline{W})-g(\overline{Y},\overline{Z})g(\overline{X},\overline{W})}.
\end{equation*}
This means that
$(M,L)$ is $h$-isotropic (Definition \ref{.7.def.9}) with scalar curvature $k_{o}=\frac{\phi_{o}}{n}.$
\par
Finally, the second part of the theorem follows from Definition
\ref{.def.1r} and the identity $(\nabla_{\gamma
\overline{\eta}}R)(\overline{X},\overline{Y},\overline{Z})=0$
\cite{r96}.
\end{proof}

\smallskip

As a consequence of the above theorem, we have\vspace{-5pt}
\begin{cor}For an $R^h$-recurrent  Finsler manifold admitting a concurrent $\pi$-vector field
$\overline{\zeta}$,   the  $h$-curvature tensor $R$ vanishes.
\end{cor}

%%%%%%%%%%%%%%%%%%%%%%%%%%%%%%%%%%%%%%%%%%%%%%%%%%%%%%%%%% Concluding Remark %%%%%%%%%%%%%%%%%%%%%%%%%%%%%%%%%%%%%%%%%%%%%%%%%%%

\bigskip
\noindent\textbf{Concluding Remarks.}\vspace{0.2cm}\\
$\bullet$ The concept of a concircular $\pi$-vector field in Finsler geometry has been introduced and investigated from a global point of view. This generalizes, on one hand, the concept of a concircular vector field in Riemannian geometry and, on the other hand, the concept of a concurrent vector field in Finsler geometry. Various properties of concircular $\pi$-vector fields have been obteined.\\
$\bullet$ The effect of the existence of concircular $\pi$-vector fields on some of the most important special Finsle spaces has been investigated.\\
$\bullet$ Different types of recurrent Finsler manifolds admitting concircular $\pi$-vector fields have been studied.\\
$\bullet$ Almost all results of this work have been obtained in a coordinate-free form, without being trapped into the complications of indices.

%%%%%%%%%%%%%%%%%%%%%%%%%%%%%%%%%%%%%%%%%%%%%%%%%%%%%%%%%% Refrence %%%%%%%%%%%%%%%%%%%%%%%%%%%%%%%%%%%%%%%%%%%%%%%%%%%%%

%\bigskip
\providecommand{\bysame}{\leavevmode\hbox
to3em{\hrulefill}\thinspace}
\providecommand{\MR}{\relax\ifhmode\unskip\space\fi MR }
% \MRhref is called by the amsart/book/proc definition of \MR.
\providecommand{\MRhref}[2]{%
  \href{http://www.ams.org/mathscinet-getitem?mr=#1}{#2}
} \providecommand{\href}[2]{#2}

\end{document}